\crefname{hypothesis}{Hypothesis}{Hypotheses}
\crefname{assumption}{Assumption}{Assumption}
\newcommand{\reviewblue}[1]{{\textcolor{black}{#1}}}
\newcommand{\reviewred}[1]{{\textcolor{black}{#1}}}
\colorlet{texcscolor}{blue!50!black}
\colorlet{texemcolor}{red!70!black}
\colorlet{texpreamble}{red!70!black}
\colorlet{codebackground}{black!25!white!25}
\lstdefinestyle{siamlatex}{%
  style=tcblatex,
  texcsstyle=*\color{texcscolor},
  texcsstyle=[2]\color{texemcolor},
  keywordstyle=[2]\color{texemcolor},
  moretexcs={cref,Cref,maketitle,mathcal,text,headers,email,url},
}
\DeclareTotalTCBox{\code}{ v O{} }
{ %
  fontupper=\ttfamily\color{black},
  nobeforeafter,
  tcbox raise base,
  colback=codebackground,colframe=white,
  top=0pt,bottom=0pt,left=0mm,right=0mm,
  leftrule=0pt,rightrule=0pt,toprule=0mm,bottomrule=0mm,
  boxsep=0.5mm,
  #2}{#1}
\patchcmd\newpage{\vfil}{}{}{}
\title{Adaptive finite element approximations for elliptic problems using regularized forcing data\thanks{Draft date: \today
\funding{This work was partially supported by the National Research Projects (PRIN  2017) ``Numerical Analysis for Full and Reduced Order Methods for the efficient and accurate solution of complex systems governed by Partial Differential Equations'', funded by the Italian Ministry of Education, University, and Research.}}}
\author{Luca Heltai\thanks{Mathematics Area, SISSA -- International School for Advanced Studies, via Bonomea 265, 34136, Trieste, Italy (\email{luca.heltai@sissa.it}, \email{wenyu.lei@sissa.it}).}
\and Wenyu Lei\footnotemark[2]
}
\title{Adaptive finite element approximations for elliptic problems using regularized forcing data\thanks{Draft date: \today
\funding{This work was partially supported by the National Research Projects (PRIN  2017) ``Numerical Analysis for Full and Reduced Order Methods for the efficient and accurate solution of complex systems governed by Partial Differential Equations'', funded by the Italian Ministry of Education, University, and Research.}}}
\author{Luca Heltai\thanks{Mathematics Area, SISSA -- International School for Advanced Studies, via Bonomea 265, 34136, Trieste, Italy (\email{luca.heltai@sissa.it}, \email{wenyu.lei@sissa.it}).}
\and Wenyu Lei\footnotemark[2]
}
\begin{document}
\maketitle

\begin{tcbverbatimwrite}{tmp_\jobname_abstract.tex}
    \begin{abstract}
        We propose an adaptive finite element algorithm to approximate solutions
        of elliptic problems whose forcing data is locally defined and is
        approximated by regularization (or mollification). We show that
        the energy error decay is quasi-optimal in two dimensional space and
        sub-optimal in three dimensional space. Numerical simulations are
        provided to confirm our findings.
    \end{abstract}

    \begin{keywords}
        Finite elements, interface problems, immersed boundary method, Dirac delta approximations, a posteriori error estimates, adaptivity
    \end{keywords}

    \begin{AMS}
        65N15,                    %
        65N30.                    %
        65N50,                    %
    \end{AMS}
\end{tcbverbatimwrite}
    \begin{abstract}
        We propose an adaptive finite element algorithm to approximate solutions
        of elliptic problems whose forcing data is locally defined and is
        approximated by regularization (or mollification). We show that
        the energy error decay is quasi-optimal in two dimensional space and
        sub-optimal in three dimensional space. Numerical simulations are
        provided to confirm our findings.
    \end{abstract}

    \begin{keywords}
        Finite elements, interface problems, immersed boundary method, Dirac delta approximations, a posteriori error estimates, adaptivity
    \end{keywords}

    \begin{AMS}
        65N15,                    %
        65N30.                    %
        65N50,                    %
    \end{AMS}

\section{Introduction}\label{s:intro}
Let us consider the numerical approximation of the following elliptic problem with rough data: given a bounded domain $\Omega\subset \mathbb R^d$ with $d=2$ or $3$, we seek a distribution $u$ satisfying
\begin{equation}\label{e:distributional}
    \begin{aligned}
        -\DIV(A(x)\GRAD u) + c(x)u = F, \quad & \text{ in } \Omega,          \\
        u = 0, \quad                          & \text{ on } \partial\Omega .
    \end{aligned}
\end{equation}
Here $A(x)$ is a $d\times d$ symmetric positive definite matrix with all entries in $C^1(\overline \Omega)$. We further assume that there exist positive constants $a_0$ and $a_1$ satisfying
\begin{equation}\label{i:Ax}
    a_0|\nu|^2\le \nu^{\Tr} A(x) \nu \le a_1|\nu|^2, \quad \forall \nu\in\Rd \text{ and } x\in \overline\Omega .
\end{equation}
The lower order coefficient $c(x)$ is set to be non-negative and Lipschitz in $\overline \Omega$. We consider rough forcing data $F$ that can be written as
\[
    F(x) := \int_B \delta(x-y) f(y) \diff y, \quad\text{ with } B \subset \Omega,
\]
where $\delta$ denotes the $d$-dimensional Dirac distribution and $B\subset \mathbb R^d$ is an immersed domain. If the co-dimension of $B$ is zero, $F(x)=\chi_B(x)f(x)$ with $\chi_B$ denoting the indicator function of $B$. If the co-dimension of $B$ is one, $F$ can be written as a distribution. That is
\begin{equation}\label{e:rhs}
    \langle F, \phi\rangle := \int_B f(y) \phi(y)\diff y,\quad \forall \phi \in C^\infty_c(\overline \Omega) .
\end{equation}
In the rest of the paper, our discussion on the numerical approximation of \eqref{e:distributional} will be restricted to the co-dimension one case.

The above elliptic problem is a prototype of governing differential equations for interface problems, phase transitions and fluid-structure interactions problems using the immersed boundary method \cite{peskin2002immersed,BoffiGastaldi2021,DeSilva2015,Suarez2014}. Many works exist that concentrate on the study of (adaptive) finite element methods with point Dirac sources~\cite{Bertoluzza2017,Wihler2012,Abner2019}. The relevant literature for more complex distributions of singularities is more limited~\cite{Millar2021,Yin2021}. The motivation for such methods lies on the possibly complex geometry of the immersed domain, such as thin vascular structures in tissues \cite{heltai2019multiscale,heltai2021coupling,Zunino2019} or fibers in isotropic materials \cite{alzetta2020multiscale}, for which it is difficult to obtain a bulk mesh of $\Omega$ matching the embedded domain.

On the the other hand, when considering a non-matching bulk mesh to approximate problem \eqref{e:distributional}, it is necessary to evaluate $F$ on the quadrature points of $\Omega$ or to compute \eqref{e:rhs} when $\phi$ is a test function in a finite dimensional space. The implementation of the former strategy was introduced by Peskin in the early seventies (see \cite{peskin2002immersed} for a review) in the context of finite differences, and later adopted to  finite volume and finite element approaches \cite{mittal2005immersed}. The latter approximation strategy, usually referred to as the ``variational formulation'', was introduced in \cite{boffi2003finite} and later works, for example \cite{heltai2012variational}.

\reviewred{When computing $\int_B f\phi$ in the variational formulation, one has
    a choice to make: i) either evaluate $f$ and $\phi$ on the quadrature points
    derived from a fixed subdivision of $B$ which is independent on the subdivision
    of $\Omega$ (using a single quadrature scheme on $B$), or ii) evaluate $f$ and
    $\phi$ on the non-zero intersections of cells $K\in B$ and $T\in\Omega$ (using a
    custom quadrature formula for the generally polygonal intersection).}

\reviewred{The first approach is cheaper to compute, but it introduces some
    errors due to integration of non-smooth functions using quadrature rules. It is
    a two step process, that requires first the exact identification of the cells
    that contain quadrature points of $B$, and then the computation of the inverse
    of the mapping from the reference cell $\widehat T$ to the cell $T$ in the
    subdivision of $\Omega$ that contains the quadrature points. Such inverse
    mapping is non-linear in unstructured quad- or hex-meshes, or when using higher
    order mappings.}

\reviewred{The second approach requires a much more expensive computation, and
    its efficient implementation is the subject of active research (see,
    e.g.,~\cite{krause2016variationaltransfer, boffi2022interfacematrix}). If one
    wants to perform such integration exactly, it would require first the
    computation of the intersection between $K\in B$ and $T\in\Omega$, then the
    definition of a quadrature scheme on the (possibly polygonal or curved)
    intersection, in addition to the computation of the inverse of the mapping from
    the reference cells to the intersection part.}

\reviewred{To avoid the complexity related to the evaluation of inverse mappings and
    possibly the computation of non-matching grid intersections, here we consider an
    alternative approach by approximating $F$ with its regularization (or
    mollification) \cite{MR3429589,tornberg2002multi}. That is, we replace $\delta$
    with a family of Dirac delta approximations $\delta^r$, where $r$ denotes the
    regularization parameter so that the regularized data, denoted by $F^r$,
    satisfies certain smoothness property.}

\reviewred{In the proposed finite element algorithm, we compute a regularized
    right-hand-side $\int_B f \phi^r \,\diff x$ for a fixed parameter $r$. This
    computation requires the evaluation of the double integral
    \[
        \int_{\Omega} \int_B f(x)\delta^r(x-y) \phi(y) \diff x \diff y.
    \]
    When applying quadrature schemes on both the support of $\phi$ and on $B$, we
    only evaluate $f$ and $\phi$ on (independent sets of) quadrature points of $B$
    and of $\Omega$, respectively, weighted by the regularized Dirac distribution.
    This computation need to be performed only when the integration cells are at a
    distance smaller than $r$, and does not require any special implementation.}

The error between the exact solution $u$ and its regularized counterpart
$\uepsilon$ is analyzed in \cite{heltai2020priori} in both the $H^1$ and $L^2$
sense. The finite element approximation of \eqref{e:distributional} using
quasi-uniform subdivisions is also discussed in \cite{heltai2020priori}
\reviewred{where we also show (see \cite[Figure~7]{heltai2020priori}), that the
    computational cost and the accuracy of the regularization approach are
    comparable to the corresponding non-regularized approach, at least in the first
    case described above. The regularization in this case has the advantage of being
    trivial to implement. A fact that contributed significantly to the success of
    the immersed boundary method in the literature, which remains one of the most
    used methods in the finite difference and finite volume community for the
    computation of non-matching couplings.}

In this paper, we consider the finite element approximation of
\eqref{e:distributional} with the regularized data $F^r$ under adaptive
subdivisions. \reviewred{We show that the regularization approach is not only
    trivial to implement, but it also lends itself quite well to adaptive finite
    element methods (AFEMs) and to a-posteriori errror analysis}. AFEMs have been
widely used for decades; see \cite{nochetto2009theory} for a survey of AFEMs for
elliptic problems. In terms of the singular data $F\in H^{-1}(\Omega)$, we refer
to \cite{stevenson2007optimality, stevenson2005optimal} for piecewise constant
approximation of $F$ and \cite{cohen2012convergence} using surrogate data
indicators. \reviewblue{We also refer to
    \cite{nochetto1995pointwise,kreuzer2021oscillations} on AFEM for more complex
    singularities.}

The approximation error based on regularized data consists of two parts: the regularization error for $u$ and the finite element approximation error for $\uepsilon$. The analysis of adaptive algorithms applied to the regularized problem is complicated by the fact that optimal choices of the regularization parameter $r$ depend on the local mesh size $h$ (see~\cite{heltai2020priori}), and that the error estimates depend both on the local mesh size and on the regularization parameter $r$.

We present our algorithm in Section~\ref{s:algorithm}. We control each error in a separate routine: the routine $\INTERFACE$ controls the first error using the perturbation theory built in \cite{heltai2020priori} (see also Proposition~\ref{p:reg-error-sol}) and returns the optimal regularization parameter $r$ to use in the  routine $\SOLVE$, which controls the error of the regularized problem using classic AFEM results based on \cite{cohen2012convergence}.

Given a target tolerance, the $\INTERFACE$ routine refines \emph{a priori} the cells around the immersed domain so that the regularization error can be properly controlled. This procedure ensures that the regularization parameter $r$ is suitable for the local mesh size around the immersed domain. Given the regularization parameter $r$, the $\SOLVE$ routine will then approximate the regularized problem using AFEM based on \cite{cohen2012convergence} so that the finite element error can also be reduced below the desired tolerance. Our complete  algorithm is based on the iteration of the two routines above with a decaying target tolerance.

The performance of our adaptive algorithm is studied adapting the theories from \cite{cohen2012convergence,bonito2013adaptive} to our regularized problem. The major point to take into account is that all the estimates one obtains are generally dependent on the regularization parameter $r$, which in turn is generally chosen according to the local mesh size $h$. More precisely speaking, the following two issues must be analyzed carefully:
\begin{itemize}
    \item For any $r>0$, the regularized solutions $\uepsilon$ are in some approximation class $\mathcal A^s$ for some $s\in (0, \tfrac1d]$ (see Section~\ref{ss:solve} for the definition) and the corresponding quasi-semi-norms are uniformly bounded.
    \item Since regularized data $F^r$ is in $L^2(\Omega)$, we can guarantee
          that there exists an adaptive method to approximate $F^r$ with a
          quasi-optimal rate (cf. \cite[Assumption~$\widetilde A(s)$]{cohen2012convergence}).
          \reviewblue{That is, starting from a subdivision $\mathcal T$ and applying
              the bulk chasing strategy to obtain a refinement $\mathcal T^*$ of
              $\mathcal T$, the data indicator (defined in Section~\ref{ss:indicator}) is less than the tolerance $\tau$
              and
              \[
                  \#(\mathcal T^*) - \#(\mathcal T) \le C\tau^{-d} ;
              \]
              see \cite[Theorem~7.3]{cohen2012convergence}. However, the constant $C$ above is depending on the regularization parameter
              $r$, \ie on the local mesh size $h$ and may lead to a deterioration of the
              convergence rates.}
\end{itemize}
To resolve the first issue, we follow the arguments from
\cite{bonito2013adaptive}. Thanks to the \emph{a priori} refinements from the
$\INTERFACE$ part of the algorithm, Lemma~3.2 of \cite{bonito2013adaptive}
allows us to measure the complexity of $\SOLVE$ stage independently of $r$.
\reviewblue{ To remedy the second issue, in Lemma~\ref{l:greedy}, we revisit
    \cite[Theorem~7.3]{cohen2012convergence} and provide a finer estimate for the
    constant $C$ above which can be shown to be $C\sim r^{1-d/2}$ by exploiting the
    fact that $F^r$ is supported in the neighborhood of the immersed domain.
    It turns out that we can still obtain optimal
    convergence rates in the two dimensional case, while we get
    suboptimal rates in the three dimensional case. We show
    this in Theorem~\ref{t:convergence-rate} and Remark~\ref{r:convergence-rates}.}

The rest of this article is organized as follows. In Section~\ref{s:prelim} we provide some essential notations to define our model problem in the variational sense, and we introduce the data regularization (or data mollification) as well as a regularized version of the model problem. In Section~\ref{s:algorithm} we review the AFEM for elliptic problems with $L^2(\Omega)$ forcing data. Following this approach, we then propose our adaptive algorithm for the model problem. The analysis of the adaptive algorithm is presented in Section~\ref{s:analysis}. In Section~\ref{s:numerics} we provide some numerical experiments to illustrate the performance of our proposed algorithm. We conclude with some remarks in Section~\ref{s:discussion}.

\subsection*{Notations and Sobolev spaces}
Let $\Omega\subset \mathbb R^d$ be a bounded Lipschitz domain. We write $A\lesssim B$ if $A\le cB$ for some constant $c$ independent of $A$, $B$ as well as other discretization parameters. We say $A\sim B$ if $A\lesssim B$ and $B\lesssim A$.

Given a Hilbert space $X$, we denote with $(\cdot, \cdot)_X$ its inner product, and with $X'$ its dual space with the induced norm
\[
    \|F\|_{X'} = \sup_{\|v\|_X=1} \langle F, v\rangle_{X',X},
\]
where $\langle\cdot,\cdot\rangle_{X',X}$ denotes the duality pairing.

We indicate with $L^2(\Omega)$, $H^1(\Omega)$ and $H^2(\Omega)$ the usual Sobolev spaces and use $(\cdot,\cdot)_\Omega$ to indicate the $L^2(\Omega)$-inner product. For $s\in (0,1)$, we denote the fractional Sobolev spaces $H^s(\Omega)$ using the Sobolev--Slobodeckij norm
\[
    \|v\|_{H^s(\Omega)} :=\Big( \|v\|_{L^2(\Omega)}^2
    + \int_\Omega\int_\Omega \frac{(v(x)-v(y))^2}{|x-y|^{d+2s}}\diff x\diff y\Big)^{1/2} .
\]
For $s\in (1,2)$,
\[
    \|v\|_{H^s(\Omega)}=\big(\|v\|_{L^2(\Omega)}^2+\|\GRAD v\|_{H^{s-1}(\Omega)}^2\big)^{1/2} .
\]
For $s\in (\tfrac12, 1]$, we set $H^s_0(\Omega)$ to be the collection of functions in $H^1(\Omega)$ vanishing on $\partial\Omega$. It is well known that $H^s_0(\Omega)$ is the closure of $C_c^\infty(\overline \Omega)$ (the space of infinitely differentiable functions with compact support in $\overline \Omega$) with respect to the norm of $H^s(\Omega)$ (cf. \cite{grisvard2011elliptic}). Also, $H^s_0(\Omega)$ is an interpolation space between $L^2(\Omega)$ and $\Hunz$ using the real method. Finally for $s\in (\tfrac12 ,1]$, we set $H^{-s}(\Omega) = H^s_0(\Omega)'$.

\section{Model problem and its regularization}\label{s:prelim}
In this section, we will introduce the variational formulation of our model problem as well as a formulation when the forcing data $F$ is approximated by regularization.

\subsection{The forcing data}
Let $\omega\subset\Omega$ be a bounded domain and let $\gamma := \partial
    \omega$ be its boundary, which \reviewblue{we take to be Lipschitz}. In what
follows, we only consider the case when $\gamma$ is away from $\partial\Omega$,
\ie there exists a positive constant $\constBoundary$ such that
\begin{equation}\label{i:constBoundary}
    \dist(\gamma,\partial\Omega) > \constBoundary.
\end{equation}

\reviewred{We assume that the data function $f\in L^\infty(\gamma)$}. \reviewblue{For a technicality (cf. Lemma~\ref{l:quasi-monotonicity})},
\reviewred{we further assume that there exists a finite collection of non-overlapping
non-empty open sets $\{\gamma_j \subset \gamma\}_{j=1}^{M_\gamma}$ such that $\sum_{j=1}^{M_\gamma} |\gamma_j| = |\gamma|$ and $f$
does not change sign on each $\gamma_j$.}
\reviewblue{ We define $I$ to be the set where $f$ changes sign, i.e.,
    \begin{equation}\label{e:sign-change}
        I :=
        \bigcup_{i=1}^{M_\gamma} \partial \gamma_i .
    \end{equation}
    The above limitation on the
    sign change is used only in Lemma~\ref{l:quasi-monotonicity}, and allows
    us to simplify its proof, without sacrificing too much on the generality
    of the admissible data. In particular, a sufficient condition for the
    above statement to be true is that the co-dimension two measure of $I$
    is bounded, i.e., $I$ consists of a finite number of points for
    one-dimensional curves embedded in two dimensions, or collection of
    curves with finite length for two-dimensional surfaces embedded in
    three-dimension.}

We then consider a forcing data that can be formally written as
\begin{equation}\label{e:dirac}
    F = \mathcal M f := \int_{\gamma} \delta(x-y) f(y) \diff \sigma_y.
\end{equation}
The variational definition of $F$ (see, \ie \cite{heltai2020priori}) implies that that $F\in H^{-s}(\Omega)\subset H^{-1}(\Omega)$ with any fixed $s\in (\tfrac12, 1]$. In fact, for any $v\in \Hunz$, there holds
\begin{equation}\label{i:bounded-functional}
    \begin{aligned}
        \langle F, v\rangle_{H^{-1}(\Omega),\Hunz} & = \int_\gamma fv\diff\sigma                      \\
                                                   & \le \|f\|_{L^2(\gamma)} \|v\|_{L^2(\gamma)}      \\
                                                   & \lesssim \|f\|_{L^2(\gamma)} \|v\|_{H^s(\omega)}
        \lesssim \|f\|_{L^2(\gamma)} \|v\|_{H^s(\Omega)} ,
    \end{aligned}
\end{equation}
where for the first inequality above we applied \reviewblue{Schwarz} inequality and for the second inequality we used the trace inequality.

\subsection{Weak formulation}
The variational formulation of \eqref{e:distributional} reads: given a function $f\in L^2(\gamma)$, we seek $u\in \Hunz$ such that
\begin{equation}\label{e:variational}
    A(u,v) = \langle F,v \rangle_{H^{-1}(\Omega), \Hunz},\quad\forall v\in \Hunz ,
\end{equation}
where
\[
    A(v,w) = \int_\Omega \GRAD v^{\Tr} A(x)\GRAD w + c(x)vw\diff x,
    \quad \forall v,w\in \Hunz.
\]
Assumption \eqref{i:Ax} and the non-negativity of $c(x)$ guarantee that the bilinear form $A(\cdot,\cdot)$ is bounded and coercive, \ie there exist positive constants $m, M$ so that for $v,w\in\Hunz$,
\begin{equation}\label{i:bound-coercive}
    A(v,w)\le M\|v\|_{H^1(\Omega)}\|w\|_{H^1(\Omega)}\text{ and }
    A(v,v)\ge m\|v\|^2_{H^1(\Omega)},
\end{equation}
and \eqref{e:variational} admits a unique solution by \reviewred{the} Lax-Milgram Lemma. Bound \eqref{i:bound-coercive} also
implies that the energy norm $\vertiii{v} := \sqrt{A(v,v)} \sim \|v\|_{H^1(\Omega)}$. In what follows, we use the energy norm $\vertiii{.}$ instead of $\|.\|_{H^1(\Omega)}$ in our adaptive algorithm as well as in the performance analysis.
\subsection{Regularization}
The regularization of $F$ is based on the approximation of the Dirac delta distribution. To this end, we first define a class of functions $\psi$ satisfying the following assumptions:
\begin{assumption}\label{a:app-dirac}
    Given $k \in \mathbb{N}$, let $\psi(x)$ in $L^\infty(\Rd)$
    such that
    \begin{enumerate}[1.]
        \item \textbf{Nonnegativity:} $\psi(x)\ge 0$;
        \item \textbf{Compact support:}
              $\psi(x)$ is compactly supported, with support $\mathrm{supp}(\psi)$
              contained in $B_{r_0}(0)$ (the ball centered in zero with radius $r_0$)
              for some $r_0>0$;
        \item \textbf{Moments condition}: Given $k\in\mathbb N$, we say $\psi$ satisfies the $k$-th
              order moment condition if
              \begin{equation}
                  \int_\Rd y^\alpha_i\psi(x-y)\diff y = x^\alpha_i \qquad
                  i=1\ldots d, \quad 0\leq\alpha\leq k, \quad\forall x \in \Rd;
              \end{equation}
        \item \reviewred{\textbf{Monotonicity:} $\psi(x/r_2) \leq \psi(x/r_1)$ if $r_2<r_1$.}
    \end{enumerate}
\end{assumption}
We refer to \cite{heltai2020priori} for some examples of $\psi$ and
\cite[Section~3]{MR3429589} for a general discussion. \reviewred{Here we only
    consider even, nonnegative functions $\psi_{1d}$ that are supported in $[-1,1]$,
    are nonincreasing in $[0,1]$, and satisfy $\int_{\mathbb R}\psi_{1d}=1$. Then we
    generate $\psi$ in $\Rd$ by the radially symmetric extension $\psi_{1d}(|x|)$ or
    the tensor product extension $\prod_{i=1}^d\psi_{1d}(x_i)$. A function $\psi$
    defined in this way satisfies Assumption~\ref{a:app-dirac} with $k=1$.} Using
the above $\psi$, for $r>0$, we define the Dirac approximation $\delta^r$ by
\begin{equation}
    \label{e:dirac-approximation}
    \depsilon(x) := \frac{1}{r^d} \psi\left(\frac{x}{r}\right).
\end{equation}
Thus,
\[
    \lim_{r \to 0} \depsilon(x) = \lim_{r \to 0}
    \frac{1}{r^d} \psi\left(\frac{x}{r}\right) = \delta(x),
\]
where the limit should be understood in the space of \reviewblue{Schwarz} distributions.

\begin{remark}[nonnegativity of $\psi$]
    We will use the non-negativity of $\psi$ to analyze the performance of our adaptive algorithm. However, this is not required in the error analysis for finite element discretization of \eqref{e:distributional} using quasi-uniform subdivisions of $\Omega$; see \cite{heltai2020priori} for more details.
\end{remark}

\begin{definition}[Regularization]
    \label{d:regularization}
    For a function $v\in L^1(\Omega)$\, we define its regularization $\vepsilon(x)$ in the domain $\Omega$ through the mollifier $\psi$ by
    \begin{equation}\label{e:regularization-A}
        \vepsilon(x) := \int_{\Omega} \depsilon(x-y) v(y) \diff y,\qquad\forall x\in \Omega,
    \end{equation}
    where $\depsilon$ is given by \eqref{e:dirac-approximation} and where $\psi$ satisfies Assumption~\ref{a:app-dirac} for some $k\geq 0$.

    For functionals $F$ in negative Sobolev spaces, say $F\in H^{-s}(\Omega)$, with $s\in (\tfrac12,1]$, we define their regularization $F^r$ by the action of $F$ on $v^r$ with $v\in H^s_0(\Omega)$, \ie
    \begin{equation}\label{e:regularization-B}
        \langle F^r, v \rangle_{H^{-s}(\Omega), H^s_0(\Omega)} :=   \langle
        F, v^r \rangle_{H^{-s}(\Omega), H^s_0(\Omega)} .
    \end{equation}
\end{definition}
We note that the definition of $F^r$ is well defined with $F$ given by \eqref{e:dirac}. In fact, by \cite[Corollary~1]{heltai2020priori}, there holds
\[
    \|v-v^r\|_{H^s(\omega)} \lesssim \|v\|_{H^s(\Omega)}.
\]
Therefore, according to the argument in \eqref{i:bounded-functional}, we have
\[
    \langle F^r, v \rangle_{H^{-s}(\Omega), H^s_0(\Omega)}
    \lesssim \|f\|_{L^2(\gamma)} \|\vepsilon\|_{H^s(\omega)}
    \lesssim \|f\|_{L^2(\gamma)}\|v\|_{H^s(\Omega)} .
\]
\begin{remark}\label{r:regF}
    For $F$ defined by \eqref{e:dirac}, applying Fubini's Theorem to the right
    hand side of \eqref{e:regularization-B} yields
    \[
        F^r(x) = \int_\gamma f(y) \depsilon(y-x) \diff y \in L^2(\Omega) .
    \]
    If $\psi$ is chosen to be symmetric, the definition of $F^r$ can be interpreted by replacing $\delta$ in \eqref{e:dirac} with the Dirac approximation $\depsilon$.
\end{remark}

\begin{remark}[Error estimate of the regularization]\label{r:data-error}
    \reviewred{Lemma~10 of \cite{heltai2020priori} implies that under the
        Assumption~\ref{a:app-dirac}, together with
        Equation~\eqref{i:constBoundary}, the following regularization error
        estimate holds when $r<1$,}
    \begin{equation}\label{i:data-error}
        \|F-F^r\|_{H^{-1}(\Omega)} \le \constReg r^{1/2}\|f\|_{L^2(\gamma)},
    \end{equation}
    where the constant $\constReg$ depends on $\psi$ in
    Assumption~\ref{a:app-dirac} and on $\omega$.
\end{remark}

\subsection{Regularized problem}
A regularized version of problem \eqref{e:variational} reads: find $\uepsilon\in \Hunz$ satisfying
\begin{equation}\label{e:regularized}
    A(\uepsilon, v) = \langle F^r, v\rangle_{H^{-1}(\Omega), \Hunz}, \qquad \forall v\in \Hunz .
\end{equation}
Notice that $\uepsilon$ exists and is unique. Moreover, \eqref{i:bound-coercive}
and Remark~\ref{r:data-error} imply that $\uepsilon$ converges to $u$ in the energy norm with the rate $O(r^{1/2})$. That is
\begin{proposition}[see also Theorem~14 of \cite{heltai2020priori}]\label{p:reg-error-sol}
    When Assumption~\ref{a:app-dirac} holds, let $u$ and $\uepsilon$ be the solution to \eqref{e:variational} and \eqref{e:regularized}, respectively. Then there holds
    \begin{equation}\label{i:error-reg-sol}
        \vertiii{u-\uepsilon} \le  m^{-1/2}\constReg r^{1/2} \|f\|_{L^2(\gamma)} .
    \end{equation}
\end{proposition}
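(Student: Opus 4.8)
The plan is to exploit the fact that the exact problem~\eqref{e:variational} and its regularized counterpart~\eqref{e:regularized} are posed with the \emph{same} coercive bilinear form $A(\cdot,\cdot)$, so that the difference $u-\uepsilon$ satisfies a clean error identity whose right-hand side is exactly the data perturbation $F-F^r$. This reduces the claim to a textbook continuous-dependence-on-data estimate for a coercive linear problem, and the decay rate $O(r^{1/2})$ is then supplied verbatim by the regularization error bound recorded in Remark~\ref{r:data-error}.

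First I would subtract the two variational identities: for every $v\in\Hunz$,
\[
    A(u-\uepsilon, v) = \langle F, v\rangle_{H^{-1}(\Omega),\Hunz} - \langle F^r, v\rangle_{H^{-1}(\Omega),\Hunz} = \langle F-F^r, v\rangle_{H^{-1}(\Omega),\Hunz}.
\]
Next I would test this identity with $v = u-\uepsilon$ itself. By the definition of the energy norm the left-hand side becomes $\vertiii{u-\uepsilon}^2 = A(u-\uepsilon, u-\uepsilon)$, while on the right I would bound the duality pairing by the dual-norm definition,
\[
    \langle F-F^r, u-\uepsilon\rangle_{H^{-1}(\Omega),\Hunz} \le \|F-F^r\|_{H^{-1}(\Omega)}\,\|u-\uepsilon\|_{H^1(\Omega)}.
\]

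It then remains to convert the $H^1(\Omega)$-norm on the right back into the energy norm. The coercivity bound~\eqref{i:bound-coercive} gives $\|u-\uepsilon\|_{H^1(\Omega)} \le m^{-1/2}\vertiii{u-\uepsilon}$, so after cancelling one factor of $\vertiii{u-\uepsilon}$ (the case $\vertiii{u-\uepsilon}=0$ being trivial) I arrive at
\[
    \vertiii{u-\uepsilon} \le m^{-1/2}\,\|F-F^r\|_{H^{-1}(\Omega)}.
\]
Finally I would insert the estimate $\|F-F^r\|_{H^{-1}(\Omega)} \le \constReg r^{1/2}\|f\|_{L^2(\gamma)}$ from Remark~\ref{r:data-error} to reach the claimed bound~\eqref{i:error-reg-sol}. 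I do not anticipate any genuine obstacle in this argument: the entire analytical difficulty is already encapsulated in the $H^{-1}$ data estimate of Remark~\ref{r:data-error}, whose proof relies on the moment and support properties of $\psi$ in Assumption~\ref{a:app-dirac} together with the separation condition~\eqref{i:constBoundary}. Once that estimate is granted, the steps above are purely mechanical stability manipulations, and the only points requiring mild care are keeping the $H^{-1}(\Omega)$/$\Hunz$ duality consistent throughout and correctly tracking the single factor of $m^{-1/2}$ introduced by coercivity.
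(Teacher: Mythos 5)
Your argument is correct and is exactly the one the paper intends: it derives the bound by combining the coercivity estimate \eqref{i:bound-coercive} with the data regularization error \eqref{i:data-error} of Remark~\ref{r:data-error}, via the standard Galerkin-orthogonality/stability manipulation, and the constants $m^{-1/2}$ and $\constReg$ are tracked correctly. The only nuance you omit is that \eqref{i:data-error} is stated for $r<1$, an implicit restriction the proposition inherits.
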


\section{Numerical algorithm}\label{s:algorithm} We approximate the solution to
the weak formulation \eqref{e:variational} by solving the regularized problem
\eqref{e:regularized} using AFEMs along with a choice of the regularization
parameter $r$. As the number of degrees of freedom increase, $r$ will tend to
zero \emph{with a rate linked to the target tolerance}. Recalling from
Remark~\ref{r:regF}, the regularized data $F^r$ is an $L^2(\Omega)$ function so
that we can use classical residual error estimators for adaptivity. In this
section, we first review AFEMs for elliptic problems with $L^2(\Omega)$ forcing
data based on
\cite{cohen2012convergence,stevenson2007optimality,nochetto2009theory}. Then we
introduce our adaptive algorithm for \eqref{e:variational}.

\subsection{Finite element approximation}
We additionally assume that $\Omega$ is a polytope. Given a data function $g\in L^2(\Omega)$, we consider a finite element approximation of $w_g \in \Hunz$ which uniquely solves
\begin{equation}\label{e:linear-problem}
    A(w_g, v) = (g, v),\quad \forall v\in \Hunz .
\end{equation}

Set $\mathcal T$ to be a subdivision of $\Omega$ made by simplices. We assume that $\mathcal T$ is conforming (no hanging nodes) and shape-regular in a sense of \cite{ern2013theory,ciarlet2002finite}, \ie there exists a positive constant $\constShapeRegular$ so that for each cell $T\in \mathcal T$,
\[
    \diam(T) \le \constShapeRegular \rho_T
\]
with $\diam(T)$ and $\rho_T$ denoting the size of $T$ and the diameter of the largest ball inscribed in $T$, respectively. We also set $h_T = |T|^{1/d}$, with $|T|$ denoting the volume of $T$. So $h_T\sim \diam(T)$, with the hiding constants depending on $\constShapeRegular$. Denote $\mathbb V(\mathcal T)\subset \Hunz$ the space of continuous piecewise linear functions subordinate to $\mathcal T$. So the finite element discretization for \eqref{e:linear-problem} reads:
\begin{algorithm}
    \begin{algorithmic}
        \STATE Solve $A(W_g, V) = (g,V),\quad\forall V\in \mathbb V(\mathcal T)$;
        \RETURN $W_g$;
    \end{algorithmic}
    \caption{$W_g = \GAL(\mathcal T, g)$}
    \label{alg:gal}
\end{algorithm}

\subsection{A posteriori error estimates with \texorpdfstring{$L^2(\Omega)$}{} data}\label{ss:indicator}
AFEMs rely on the so-called computable error estimators to evaluate the quality of the finite element approximation on each cell $T$ in the underlying subdivision $\mathcal T$. Here we consider the following local jump residual and data indicators: given a conforming subdivision $\mathcal T$, a finite element function $V\in \mathbb V(\mathcal T)$ and a data function $g\in L^2(\Omega)$, we denote $\mathcal F_T$ the collection of all faces of $T\in \mathcal T$ and define
\begin{equation}\label{e:jump-residual}
    j(V,T,\mathcal T) := \bigg( \sum_{F\in\mathcal F_T} h_F \|[A\cdot \GRAD V]\|_{L^2(F)}^2 \bigg)^{1/2}
    \text{ and }
    d(g,T,\mathcal T) := h_T \|g\|_{L^2(T)} ,
\end{equation}
where $h_F$ is the size of $F$ and $[.]$ denotes the normal jump across the face $F$. Their global counterparts are given by
\[
    \mathcal J(V,\mathcal T) := \bigg( \sum_{T\in\mathcal T} j(V,T,\mathcal T)^2\bigg)^{1/2}
    \text{ and }
    \mathcal D(g, \mathcal T) :=  \bigg( \sum_{T\in\mathcal T} d(g,T,\mathcal T)^2\bigg)^{1/2} .
\]
Letting $W_g = \GAL(\mathcal T, g)$, we define the local error indicator:
\[
    e(W_g,T,\mathcal T) = \big(j(W_g,T,\mathcal T)^2 + d(g,T,\mathcal T)^2\big)^{1/2}
\]
as well as the global indicator
\[
    \mathcal E(W_g,\mathcal T) = \bigg(\sum_{T\in\mathcal T} e(W_g, T,\mathcal T)^2\bigg)^{1/2} .
\]
The computation of such indicators is usually performed in the stage $\ESTIMATE$ of AFEM algorithms, as summarized in Algorithm~\ref{alg:estimate}.

\begin{algorithm}
    \begin{algorithmic}
        \STATE Given the approximate solution $W_g$ on $\mathcal T$;
        \FOR{$T \in \mathcal T$}
        \STATE Compute $j(T) = j(W, T, \mathcal T)$;
        \STATE Compute $d(T) = d(g, T, \mathcal T)$;
        \STATE Compute $e(T) = e(W_g, T, \mathcal T)$;
        \ENDFOR
        \RETURN $\{j(T), d(T), e(T)\}_{T\in\mathcal T}$;
    \end{algorithmic}
    \caption{$\{j(T), d(T), e(T)\}_{T\in\mathcal T} = \ESTIMATE(\mathcal T, W_g)$}
    \label{alg:estimate}
\end{algorithm}

\subsection{Marking of cells based on error indicators}
\label{ss:mark}

The estimated error per cell obtained in the $\ESTIMATE$ algorithm are used to perform refinement based on  the bulk chasing strategy \cite{dorfler1996convergent} (or the D\"orfler marking strategy), summarized in Algorithm~\ref{alg:mark}. Here we set $\textrm{ind}(T)$ to be a local indicator and the corresponding global indicator is denoted by $\textrm{IND}$.

\begin{algorithm}
    \begin{algorithmic}
        \STATE Given a cell indicator $\{\textrm{ind}(T)\}_{T\in \mathcal T}$ and a bulk parameter $\theta \in (0,1)$;
        \STATE Find \reviewred{a} smallest subset $\mathcal M$ of $\mathcal T$ satisfying
        \begin{equation}\label{i:dorfler}
            \bigg( \sum_{T\in \mathcal M} \textrm{ind}(T)^2 \bigg)^{1/2} \ge \theta \,\textrm{IND} .
        \end{equation}
        \RETURN $\mathcal M$;
    \end{algorithmic}
    \caption{$\mathcal M = \MARK(\{\textrm{ind}(T)\}_{T\in \mathcal T}, \mathcal T, \theta)$}
    \label{alg:mark}
\end{algorithm}

\subsection{Refinements of subdivisions}\label{sss:refine}
\reviewblue{Conforming refinement strategies, such as newest vertex bisection
\cite{binev2004adaptive,mitchell1989comparison,stevenson2008completion}, can be
used to construct a sequence of conforming simplicial subdivisions $\{\mathcal
    T_k\}_{k=0}^\infty$ by adaptively bisecting a set of cells $\mathcal
    R_k\subset\mathcal T_k$. However, our results hold also for more general
nonconforming mesh refinement strategies satisfying Condition~3 (successive
subdivisions), 4 (complexity of refinement) and 7 (admissible subdivision) in
\cite{bonito2010quasi}. For instance, in our numerical illustration in
Section~\ref{s:numerics}, we use refinements on quad- and hex-meshes where
conformity is enforced via hanging node constraints. Irrespective of the
strategy used to refine the grid (either conforming or nonconforming with
hanging node constraints), we obtain a sequence of uniformly
shape-regular subdivisions $\{\mathcal T_k\}_{k\ge 0}$ satisfying
\begin{equation}\label{i:complexity-refine}
    \#(\mathcal T_k) - \#(\mathcal T_0) \le \constComplexity \sum_{j=0}^{k-1} \#(\mathcal R_j),
\end{equation}
for some universal constant $\constComplexity\ge 1$.} We write the above
refinement process from $\mathcal T_k$ to $\mathcal T_{k+1}$ as $\mathcal
    T_{k+1} = \REFINE(\mathcal T_k, \mathcal R_k)$, summarized in
Algorithm~\ref{alg:refine}.

\begin{algorithm}
    \begin{algorithmic}
        \STATE i) (for triangular or tetrahedral meshes) bisect the marked cells $\mathcal R_k\subset \mathcal T_{k}$ once;
        \STATE Add all extra bisections to produce a conforming subdivision $\mathcal T_{k+1}$;
        \reviewblue{\STATE ii) (for quadrilateral or hexahedral meshes) split the marked
            cells into four children in two dimensions or eight children in three
            dimensions;
            \STATE  Refine all extra cells to produce a nonconforming subdivision
            $\mathcal T_{k+1}$ with at most one hanging node per face, and enforce
            conformity via \emph{hanging node constraints};}
        \RETURN $\mathcal T_{k+1}$;
    \end{algorithmic}
    \caption{$\mathcal T_{k+1} = \REFINE(\mathcal T_k, \mathcal R_k)$}
    \label{alg:refine}
\end{algorithm}

\subsubsection{Overlay of two subdivisions} Providing that both $\mathcal T_1$ and $\mathcal T_2$ are refinements of $\mathcal T_0$, we say that $\mathcal T$ is the \emph{overlay} of $\mathcal T_1$ and $\mathcal T_2$ when $\mathcal T$ consists of the union of all cells of $\mathcal T_1$ that do not contain smaller cells of $\mathcal T_2$ and vice versa. Clearly, there holds
\begin{equation}\label{i:overlay}
    \#(\mathcal T) \le \#(\mathcal T_1) + \#(\mathcal T_2) -\#(\mathcal T_0) .
\end{equation}

\subsection{AFEM with control on \texorpdfstring{$L^2$}{} data}
\label{ss:data}
It is well known (see \eg \cite{bank1985some, dorfler2002small,cohen2012convergence}) that one can obtain a global upper and lower bound   of the approximation error by the error indicator, \ie there exist positive constants $\constRel$ and $\constEff$ so That
\begin{equation}\label{i:upper-lower-bound}
    \vertiii{w_g-W_g}\le \constRel \mathcal E(W_g,\mathcal T) \text{ and }
    \mathcal E(W_g,\mathcal T) \le \constEff E(w_g,\mathcal T)
\end{equation}
with
\begin{equation}\label{e:total-error}
    E(w_g,\mathcal T) : = \big(\vertiii{w_g-W_g}^2 + \mathcal D(g, \mathcal T)^2\big)^{1/2}.
\end{equation}

\begin{remark}[Local lower bound with oscillation]
    The data indicator $\mathcal D(g, \mathcal T)$ in the lower bound can be replaced by the data oscillation \reviewblue{provided that the refinement strategy satisfies the interior node property \cite{bonito2013adaptive,cohen2012convergence, nochetto2009theory, morin2000data}:}
    \[
        \osc(g,\mathcal T) =\bigg(\sum_{T\in \mathcal T} h_T^2\|g - a_T(g)\|_{L^2(T)}^2\bigg)^{1/2} ,
    \]
    where $a_T(.)$ denotes the average on $T$. Note that $\osc(g,\mathcal T)\le
        \mathcal D(g, \mathcal T)$, and the decay of the data oscillation could be
    faster if $g$ is more regular. \reviewred{However, in our case, we set  $g=F^r$ to be
        as in Definition~\ref{d:regularization}, and the smoothness of $g$ depends
        on the choice of $\psi$ in Assumption~\ref{a:app-dirac} as well as the regularization parameter $r$. In order to simply our analysis, we will treat $F^r$ as an $L^2(\Omega)$ function and the decay rate of oscillation is then the same as the data indicator $\mathcal D(g,\mathcal T)$.}
\end{remark}

\reviewred{The \DATA routine guarantees that the global data indicator $\mathcal D$ is below a user defined tolerance. This allows us to control the total error indicator $\mathcal E$.}

\begin{algorithm}
    \begin{algorithmic}
        \STATE $\mathcal T^* = \mathcal T$;
        \WHILE{$\mathcal D(g, \mathcal T^*) >\tau$}
        \STATE $\mathcal M = \MARK(\{d(g, T, \mathcal T^*)\}_{T\in \mathcal T}, \mathcal T^*, \widetilde\theta)$;
        \STATE $\mathcal T^* = \REFINE(\mathcal T^*,\mathcal M)$;
        \ENDWHILE
        \RETURN \reviewred{$\mathcal T^*$;}
    \end{algorithmic}
    \caption{$\mathcal T^* = \DATA(\mathcal T, g, \tau, \widetilde\theta)$}
    \label{alg:data}
\end{algorithm}

\subsection{AFEM algorithm for \texorpdfstring{$L^2$}{} data}
\label{s:solve}

To summarize the above steps in a complete AFEM algorithm, we follow \cite{cohen2012convergence} to solve problem \eqref{e:linear-problem} by iteratively generating refined subdivisions and the corresponding finite element approximations. For convenience, we denote $W_k\in \mathbb V_k:= \mathbb V(\mathcal T_k)$ the finite element approximation of $w_g$ on $\mathcal T_k$. Similarly, we denote the local indicators $j_k(T):=j(W_k,T,\mathcal T_k)$, $d_k(T):=d(g,T,\mathcal T_k)$, $e_k(T):=e(W_k, T,\mathcal T_k)$ and global indicators $\mathcal J_k:=\mathcal T(W_k,\mathcal T_k)$, $\mathcal D_k:=\mathcal D(g,\mathcal T_k)$, $\mathcal E_k:=\mathcal E(W_k,\mathcal T_k)$.

Starting from a conforming subdivision $\mathcal T_0$ and given a tolerance $\tau>0$, we choose $\theta,\widetilde{\theta},\lambda\in (0,1)$ and construct the approximation $U_k$ by the routine $\SOLVE$, defined in Algorithm~\ref{alg:solve}.

\begin{algorithm}
    \label{alg:solve}
    \begin{algorithmic}
        \STATE $W_0 = \GAL(\mathcal T_0,g)$;
        \STATE \reviewred{$\{j_0(T), d_0(T), e_0(T)\}_{T\in\mathcal T_0} = \ESTIMATE(\mathcal T_0,W_0,g)$;}
        \STATE $k=0$;
        \WHILE{$\mathcal E_k >\tau$}
        \IF {$\mathcal D_k > \sigma_k :=\lambda\theta \mathcal E_k$}
        \STATE $\mathcal T_{k+1} = \DATA(\mathcal T_k, g,\tfrac{\sigma_k}2, \widetilde\theta)$;
        \ELSE
        \STATE \reviewred{$\mathcal M_k = \MARK(\{e_k(T)\}_{T\in \mathcal T_k}, \mathcal T_k, \theta)$;}
        \STATE $\mathcal T_{k+1} = \REFINE(\mathcal T_k,\mathcal M_k)$;
        \ENDIF
        \STATE $k = k+1$;
        \STATE $W_k = \GAL(\mathcal T_k,g)$;
        \STATE \reviewred{$\{j_k(T), d_k(g,T), e_k(T)\}_{T\in\mathcal T_k} = \ESTIMATE(\mathcal T_k,W_k,g)$;}
        \ENDWHILE
        \RETURN $\{W_{k}, \mathcal T_k\}$;
    \end{algorithmic}
    \caption{$\{W^*,\mathcal T^*\}=\SOLVE(\mathcal T_0, g, \tau, \theta, \widetilde\theta, \lambda)$}
\end{algorithm}

According to \cite{cohen2012convergence}, the routine $\SOLVE$ guarantees the decay of the error indicator $\mathcal E_k$ with some decay factor $\alpha\in (0,1)$ (see also Theorem~\ref{t:solve-contraction}) and hence, when this routine terminates, we obtain that
\begin{equation}\label{i:afem-error-control}
    \vertiii{w_g-W_g} \le \constReg \tau .
\end{equation}
Here we applied the upper bound in \eqref{i:upper-lower-bound}.

\reviewred{
    \begin{remark}[An alternative AFEM algorithm]
        In Section~\ref{s:analysis}, we will adapt the approximation theory developed in \cite{cohen2012convergence} to investigate the performance of \SOLVE for the regularized problem \eqref{e:regularized}. On the other hand, we could instead apply the classical AFEM cycle:
        \[
            \GAL \to \ESTIMATE \to \MARK \to \REFINE .
        \]
        We note that the same performance in terms of tolerances can be obtained by following the arguments from \cite{nochetto2009theory,bonito2013adaptive, cascon2008quasi} together with approximation properties developed in Section~\ref{s:analysis} (cf. Corollary~\ref{c:interface} and \ref{c:data}). However, the classical AFEM algorithm would suffer from a higher computational cost related to the higher number of \GAL steps that are computed in classical AFEM, and therefore we proceed as in algorithm~\ref{alg:solve}, following the steps of~\cite{cohen2012convergence}.
    \end{remark}
}

\subsection{AFEM algorithm for regularized \texorpdfstring{$H^{-1}$}{} data} Let
us first provide an assumption on the initial subdivision $\mathcal T_0$ related
to the interface $\gamma$. \reviewblue{We denote with
    \[
        \mathcal G := \mathcal G(\gamma,\mathcal T) := \{T\in\mathcal T : T\cap \gamma \neq \emptyset\}
        \text{ and } \diam(\mathcal G) = \max_{T\in \mathcal G} h_T,
    \]
    and we assume that the initial subdivision is sufficiently refined to capture
    the characteristics of $\gamma$, that is, $\mathcal G(\gamma,\mathcal T_0)$ is quasi-uniform and for any uniform refinement $\mathcal T_i$ of level
    $i$ of $\mathcal T_0$, we have that
    \begin{equation}\label{a:initial-grid}
        \sum_{T\in \mathcal G(\gamma,\mathcal T_i)} |T| \sim q^{-i/d} \sum_{T\in \mathcal G(\gamma,\mathcal T_0)} |T| \sim q^{-i/d} |\gamma|,
    \end{equation}
    where $q>1$ is the volume ratio between a cell and its children. In two dimensional space, for instance, $q=2$ for the newest vertex bisection and $q=4$ for the quad-refinement.The above assumption shows that there exists a positive const $c$ depending on $\constShapeRegular$ such that the narrow band of $\gamma$ with width $c q^{-i/d}$ covers $\mathcal G(\gamma,\mathcal T_i)$.}

\reviewblue{
    \begin{remark}
        Condition~\ref{a:initial-grid} is a way to ask that the initial subdivision
        $\mathcal T_0$ properly resolves $\gamma$. This is possible for Lipschitz
        curves and surfaces, and requires that the initial subdivision $\mathcal
            T$ is sufficiently refined around $\gamma$, with a local grid size that will
        depend on the Lipschitz constant of $\gamma$.
    \end{remark}
}

Given a target tolerance $\tau>0$, we shall determine the regularization parameter $r$ and approximate problem \eqref{e:linear-problem} with $g=F^r$ via $\SOLVE$ so that the output approximation $U$ satisfies
\[
    \vertiii{u-U} \le \vertiii{u-\uepsilon} + \vertiii{\uepsilon - U} \le \constReg \tau .
\]
To control $\vertiii{u-\uepsilon}$, in view of Proposition~\ref{p:reg-error-sol}, we can set
\[
    m^{-1/2}\reviewblue{\constReg} r^{1/2}\|f\|_{L^2(\gamma)} \le \frac{\constRel \tau}{2} .
\]
Hence we choose the regularization parameter
\begin{equation}\label{i:choice-regularization}
    r =: r(\tau) := \reviewblue{m}\Bigg( \frac{\constRel \tau}{2\constReg \|f\|_{L^2(\gamma)}} \Bigg)^{2} .
\end{equation}

\begin{remark}[values of the constants in \eqref{i:choice-regularization}]
    Since it is non-trivial to compute the constants that appear in \eqref{i:choice-regularization}, in the simulations presented in Section~\ref{s:numerics} we select $r = \tau^2$.
\end{remark}

\reviewred{From the computational point of view, if $r\ll h_T$ for $T\in \mathcal
        G(\gamma,\mathcal T)$ and if $V\in\mathbb V(\mathcal T)$ is nonzero in $T$,
    it is possible that $\delta^r(q_1-q_2)=0$ when $q_1$ is a quadrature
    point on $\gamma$ and $q_2$ is a quadrature point in $T$. In such case, we would
    approximate $\int_T F^r V$ by zero using the quadrature scheme, resulting in a
    ``transparent'' $\gamma$, implying a total loss of accuracy.} In order to avoid
such situation, we also refine the subdivision before controlling the error
$\vertiii{\uepsilon - U}$ from \SOLVE. Our goal is to find a refinement
$\mathcal T^*$ of $\mathcal T$ so that
\[
    2\diam(\mathcal G(\gamma,\mathcal T^*)) \le r  .
\]
To this end, we introduce the routine $\INTERFACE$ in Algorithm~\ref{alg:interface}.

\begin{algorithm}[H]
    \begin{algorithmic}
        \STATE $\mathcal T^* =\mathcal T$;
        \WHILE{$\diam(\mathcal G) > \tfrac r2$}
        \STATE \reviewred{Find the set $\mathcal M := \{ T \in \mathcal G(\gamma, \mathcal T) \text{ s.t. } h_T  > \tfrac r2\}$;}
        \STATE $\mathcal T^* = \REFINE(\mathcal T^*,\mathcal M)$;
        \ENDWHILE
        \RETURN $\mathcal T^*$;
    \end{algorithmic}
    \caption{$\mathcal T^* = \INTERFACE(\mathcal T, r)$}
    \label{alg:interface}
\end{algorithm}

Given an initial conforming subdivision $\mathcal T_0$ satisfying assumption \eqref{a:initial-grid}, an initial tolerance $\tau_0$, and $\beta,\theta,\widetilde\theta,\lambda,\widetilde\mu \in (0,1)$, the solver routine $\REGSOLVE$ for \eqref{e:variational} reads as in Algorithm~\ref{alg:regsolve}.

\begin{algorithm}
    \begin{algorithmic}
        \FOR{$j = 0 : j_{\max}$}
        \STATE $r_j = r(\tau_j)$;
        \STATE $\widetilde {\mathcal T}_j = \INTERFACE(\mathcal T_j, r_j)$;
        \STATE $\{U_{j+1},\mathcal T_{j+1}\}=\SOLVE(\widetilde {\mathcal T}_j, F^{r_j}, \widetilde\mu \tau_j, \theta,\widetilde\theta,\lambda)$;
        \STATE $\tau_{j+1} = \beta\tau_j$;
        \STATE $j = j+1$;
        \ENDFOR
        \RETURN $\{U_{j_{\max}}, \mathcal T_{j_{\max}}\}$;
    \end{algorithmic}
    \caption{$\{U,\mathcal T\}=\REGSOLVE(g, \mathcal T_0, j_{\max}, \tau_0, \beta,\theta,\widetilde\theta,\lambda,\widetilde\mu)$}
    \label{alg:regsolve}
\end{algorithm}

\noindent Here $\widetilde \mu$ is a constant whose choice will be explained later in Lemma~\ref{l:marked-cells-refine}. Note that the subroutine $\SOLVE$ in $\REGSOLVE$ guarantees that the energy error between $U$ and $\uepsilon$ is bounded by $\widetilde\mu \tau_j$. Whence,
\begin{proposition}\label{p:error-control}
    Let $u$ and $U_j$ be defined as in \eqref{e:variational} and $\emph{\REGSOLVE}$, respectively. Then for each nonnegative integer $j$,
    \[
        \begin{aligned}
            \vertiii{u-U_{j+1}} & \le \vertiii{u-u^{r_{j}}} + \vertiii{u^{r_{j}}-U_{j+1}}                                   \\
                                & \lesssim \vertiii{u-u^{r_{j}}} + \mathcal E(u^{r_j},\mathcal T_{j+1}) \lesssim \tau_{j} .
        \end{aligned}
    \]
\end{proposition}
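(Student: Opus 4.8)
The plan is to chain three facts, each already available in the preceding sections. The first inequality in the display is just the triangle inequality for the energy norm $\vertiii{\cdot}$, inserting the regularized solution $u^{r_j}$ of \eqref{e:regularized} as an intermediate point between the exact solution $u$ and the computed approximation $U_{j+1}$. The only thing to record here is that, by Remark~\ref{r:regF}, $F^{r_j}\in L^2(\Omega)$, so $u^{r_j}$ coincides with the solution $w_g$ of the $L^2$-data problem \eqref{e:linear-problem} for $g=F^{r_j}$. This identification is what lets me treat $U_{j+1}$ as a genuine $\GAL$/$\SOLVE$ output for an $L^2(\Omega)$ forcing term and apply the machinery of Section~\ref{ss:data}.

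For the second inequality I would invoke the a posteriori upper (reliability) bound \eqref{i:upper-lower-bound} with $w_g=u^{r_j}$, $W_g=U_{j+1}$ and $g=F^{r_j}$, which gives $\vertiii{u^{r_j}-U_{j+1}}\le\constRel\,\mathcal E(U_{j+1},\mathcal T_{j+1})$. Here I should point out that the quantity written $\mathcal E(u^{r_j},\mathcal T_{j+1})$ in the statement is, by the definition in Section~\ref{ss:indicator}, the error indicator associated with the Galerkin solution $U_{j+1}=\GAL(\mathcal T_{j+1},F^{r_j})$ and data $F^{r_j}$ on the final mesh $\mathcal T_{j+1}$ returned by $\SOLVE$; with that reading the inequality is exactly reliability and requires no further argument.

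It then remains to bound the two surviving terms by $\tau_j$. For the regularization term I would use Proposition~\ref{p:reg-error-sol}: with the calibrated choice \eqref{i:choice-regularization} of $r_j=r(\tau_j)$ the estimate \eqref{i:error-reg-sol} collapses to $\vertiii{u-u^{r_j}}\le\constRel\tau_j/2$, since the constant in \eqref{i:choice-regularization} was designed precisely so that $m^{-1/2}\constReg r_j^{1/2}\|f\|_{L^2(\gamma)}$ equals $\constRel\tau_j/2$. For the estimator term I would read off the exit condition of $\SOLVE$: inside $\REGSOLVE$ (Algorithm~\ref{alg:regsolve}) the routine is called with tolerance $\widetilde\mu\tau_j$, and its \textbf{while} loop terminates only when $\mathcal E_k\le\widetilde\mu\tau_j$, so on the returned pair $\mathcal E(U_{j+1},\mathcal T_{j+1})\le\widetilde\mu\tau_j\le\tau_j$. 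Adding the two contributions yields $\vertiii{u-U_{j+1}}\lesssim\tau_j$, completing the chain.

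The one point that deserves care, and which I regard as the only genuine obstacle, is guaranteeing that $\SOLVE$ actually terminates, so that its exit condition may be invoked. This does not follow from the present statement but from the geometric decay of $\mathcal E_k$ asserted for $\SOLVE$ (the contraction property of Theorem~\ref{t:solve-contraction}, summarized at \eqref{i:afem-error-control}): because $\mathcal E_k$ contracts by a fixed factor $\alpha\in(0,1)$ at each pass through the loop, it falls below the fixed positive tolerance $\widetilde\mu\tau_j$ after finitely many iterations, independently of the $r_j$-dependence of the data. I would also stress that the a priori refinement done by $\INTERFACE$ only alters the mesh $\widetilde{\mathcal T}_j$ from which $\SOLVE$ starts and enforces the quadrature admissibility $2\diam(\mathcal G(\gamma,\widetilde{\mathcal T}_j))\le r_j$; it affects neither the reliability bound nor the contraction argument, so the whole estimate holds irrespective of that preprocessing step.
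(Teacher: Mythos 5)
Your proposal is correct and follows essentially the same route the paper takes (the paper gives no separate proof, only the remark preceding the proposition that $\SOLVE$ guarantees $\vertiii{u^{r_j}-U_{j+1}}\lesssim\widetilde\mu\tau_j$): triangle inequality through $u^{r_j}$, the reliability bound \eqref{i:upper-lower-bound} applied to the $L^2$-data problem with $g=F^{r_j}$, the calibration \eqref{i:choice-regularization} combined with Proposition~\ref{p:reg-error-sol} for the regularization term, and the exit condition of $\SOLVE$ for the estimator term. Your added observations on the termination of $\SOLVE$ via Theorem~\ref{t:solve-contraction} and on the reading of $\mathcal E(u^{r_j},\mathcal T_{j+1})$ as the indicator of the Galerkin solution are sound and fill in details the paper leaves implicit.
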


\begin{remark}[another algorithm]\label{r:another-regsolve}
    Since $\INTERFACE$ is an \emph{a-priori} process, we can also solve \eqref{e:variational} with only one iteration in $\REGSOLVE$. That is
    \[
        \{U,\mathcal T\} = \REGSOLVE(g,\mathcal T_0, 1, \tau, \cdot, \theta,\widetilde\theta,\lambda,
        \widetilde\mu)
    \]
    \reviewblue{with $\tau = \tau_0 \beta^{j_{\max}}$.}
\end{remark}

\section{Measuring the performance}\label{s:analysis}
In this section we measure the performance of $\REGSOLVE$, \ie we analyze the subroutines $\INTERFACE$ and $\SOLVE$ respectively. We use notation $\mathrm{::}$ to connect a routine and its subroutine. For instance, the routine $\SOLVE$ in $\REGSOLVE$ is denoted by $\REGSOLVE\mathrm{::}\SOLVE$.

\subsection{Performance of \texorpdfstring{$\INTERFACE$}{INTERFACE}} The
following proposition provides the performance of $\INTERFACE$.
\begin{proposition}[performance of $\INTERFACE$]\label{p:interface}
    Under assumption \eqref{a:initial-grid} for the initial subdivision $\mathcal T_0$, given a refinement $\mathcal T$ of $\mathcal T_0$, let $\widetilde{\mathcal T}=\emph{\INTERFACE}(\mathcal T,r)$ with $r<2\diam(\mathcal G(\gamma,\mathcal T_0))$. \reviewred{Then there exists a positive constant
        $
            I_0:=I_0(\constShapeRegular,\gamma,\constComplexity)
        $
        so that}
    \begin{equation}\label{i:target-interface}
        \#(\widetilde{\mathcal T}) - \#(\mathcal T) \le I_0 r^{1-d} .
    \end{equation}
\end{proposition}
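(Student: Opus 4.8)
The plan is to bound the net increase in cells by the \emph{total} number of cells ever marked for refinement inside the \texttt{while} loop of $\INTERFACE$, and then to estimate that total by grouping the marked cells into dyadic size classes and summing a geometric series that is dominated by its finest level $\sim r$. Throughout I write $h_0$ for the (quasi-uniform) mesh size of $\mathcal G(\gamma,\mathcal T_0)$, so that $|T_0|\sim h_0^d$ for cells meeting $\gamma$ at level $0$.

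First I would apply the complexity estimate \eqref{i:complexity-refine} to the sequence of subdivisions generated by the loop. Denoting by $\mathcal R_j$ the set $\mathcal M=\{T\in\mathcal G(\gamma,\cdot): h_T>r/2\}$ marked at the $j$-th pass, \eqref{i:complexity-refine} gives
\[
    \#(\widetilde{\mathcal T}) - \#(\mathcal T) \le \constComplexity \sum_{j} \#(\mathcal R_j),
\]
so it suffices to show $\sum_j \#(\mathcal R_j) \lesssim |\gamma|\,h_0^{-1}\,r^{1-d}$. Next I would reorganise the union of all marked cells according to their size. Since each refinement reduces the cell volume by the fixed factor $q$, every marked cell has size $h_T \sim q^{-i/d} h_0$ for some integer level $i$, ranging from the coarsest cells meeting $\gamma$ in $\mathcal T$ down to the finest marked level $i_{\max}$ characterised by $q^{-i_{\max}/d} h_0 \sim r$. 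The key geometric point is that all cells arising in the process are nodes of one nested refinement forest rooted at $\mathcal T$, so any two cells of equal size are either equal or disjoint; hence the collection of \emph{all} level-$i$ cells that ever meet $\gamma$ consists of pairwise disjoint cells of volume $\sim (q^{-i/d}h_0)^d$, each contained in the tubular neighbourhood of $\gamma$ of width $\sim q^{-i/d}h_0$. Assumption \eqref{a:initial-grid} then controls their total volume by $\sim q^{-i/d}|\gamma|$, so dividing by the cell volume bounds their number by $\lesssim q^{i(1-1/d)} |\gamma|/|T_0|$, with the hidden constant depending only on $\constShapeRegular$ and the initial mesh.

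Finally I would sum over $i\le i_{\max}$. The series $\sum_{i} q^{i(1-1/d)}$ has ratio $q^{(d-1)/d}>1$, so up to a factor depending only on $q$ and $d$ it is controlled by its largest term $q^{i_{\max}(1-1/d)} = (q^{i_{\max}/d})^{d-1}\sim (h_0/r)^{d-1}$. Collecting constants yields
\[
    \sum_j \#(\mathcal R_j) \lesssim \frac{|\gamma|}{|T_0|}\Big(\frac{h_0}{r}\Big)^{d-1} \sim \frac{|\gamma|}{h_0}\, r^{1-d},
\]
and combining with the complexity estimate gives the claim with $I_0 \sim \constComplexity\, |\gamma|\, h_0^{-1}$, which depends only on $\constShapeRegular$, $\gamma$ and $\constComplexity$ as asserted.

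I expect the main obstacle to be the middle step: rigorously transferring the \emph{uniform}-refinement hypothesis \eqref{a:initial-grid} to the locally, and possibly asynchronously, refined grids produced by $\INTERFACE$. One must argue that counting the level-$i$ cells meeting $\gamma$ across all passes is the same, up to shape-regularity constants, as counting cells in the band resolved by the $i$-th uniform refinement $\mathcal G(\gamma,\mathcal T_i)$; this rests precisely on the disjointness of equal-sized cells in a nested refinement together with the band-volume estimate encoded in \eqref{a:initial-grid}. Once this counting is secured, the geometric summation is routine. The hypothesis $r < 2\diam(\mathcal G(\gamma,\mathcal T_0))$ enters only to guarantee that $i_{\max}$ lies above the coarsest level, i.e.\ that we are genuinely in the refining regime in which these estimates apply.
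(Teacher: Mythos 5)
Your proposal is correct and follows essentially the same route as the paper's proof: both organize the refined cells near $\gamma$ into size levels $i$ with $h\sim q^{-i/d}h_0$, bound the count at level $i$ by $\sim q^{i(1-1/d)}$ via assumption \eqref{a:initial-grid}, and sum the resulting geometric series, which is dominated by the finest level where $q^{-i_{\max}/d}h_0\sim r$. The only cosmetic difference is that the paper handles the reduction from a general refinement $\mathcal T$ of $\mathcal T_0$ by comparing with the nested sequence of uniform near-interface bisections of $\mathcal T_0$ (which refines $\INTERFACE(\mathcal T_0,r)$), whereas you invoke the complexity bound \eqref{i:complexity-refine} on the marked sets directly.
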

The proof is based on counting the number of bisections of $T\in \mathcal
    G(\gamma, \mathcal T_0)$. \reviewblue{Here we skip the proof and refer to the
    Appendix for more details.}

\begin{remark}
    \reviewblue{The above estimate holds provided that the initial refinement
        $\mathcal T_0$ is capable of capturing the shape of $\gamma$, i.e., that the
        assumption provided in Equation~\eqref{a:initial-grid} is valid.}
\end{remark}

A direct application of Proposition~\ref{p:interface} is to bound the cardinality of refined cells from $\INTERFACE$ in $\REGSOLVE$.
\begin{corollary}[performance of $\REGSOLVE\textrm{::}\INTERFACE$]\label{c:interface}
    Let $\{\widetilde{\mathcal T}_j\}$ be the sequence of subdivisions generated by $\emph{\INTERFACE}$ in $\emph{\REGSOLVE}$. Then at $j$-th iterate, there exists a positive constant
    $
        I_0:=I_0(\constShapeRegular,\gamma,f,\constComplexity)
    $
    satisfying
    \begin{equation}\label{i:error-regularization}
        \#(\widetilde{\mathcal T}_j) - \#(\mathcal T_j) \le I_0 \tau_j^{-2(d-1)} .
    \end{equation}
\end{corollary}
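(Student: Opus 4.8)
My plan is to obtain \eqref{i:error-regularization} as a direct consequence of Proposition~\ref{p:interface}, the only real work being to substitute the explicit dependence of the regularization parameter $r_j$ on the tolerance $\tau_j$ and then to record how the resulting constant inherits a dependence on $f$. First I would recall that inside $\REGSOLVE$ the $j$-th call to the interface routine is $\widetilde{\mathcal T}_j = \INTERFACE(\mathcal T_j, r_j)$ with $r_j = r(\tau_j)$ chosen according to \eqref{i:choice-regularization}, so that
\[
    r_j = m \left( \frac{\constRel}{2\constReg \|f\|_{L^2(\gamma)}} \right)^2 \tau_j^2 =: c_r \tau_j^2,
\]
where the positive constant $c_r$ depends only on $m$, $\constRel$, $\constReg$ and $\|f\|_{L^2(\gamma)}$. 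Since each $\mathcal T_j$ is a refinement of $\mathcal T_0$ and assumption \eqref{a:initial-grid} is in force, Proposition~\ref{p:interface} applies to $\mathcal T = \mathcal T_j$ with $r = r_j$ whenever $r_j < 2\diam(\mathcal G(\gamma,\mathcal T_0))$.

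Next I would apply Proposition~\ref{p:interface} and plug in the relation $r_j = c_r \tau_j^2$. Writing $I_0^{\mathrm{prop}} = I_0(\constShapeRegular,\gamma,\constComplexity)$ for the constant furnished by that proposition and using $2(1-d) = -2(d-1)$, this gives
\[
    \#(\widetilde{\mathcal T}_j) - \#(\mathcal T_j) \le I_0^{\mathrm{prop}} r_j^{1-d} = I_0^{\mathrm{prop}} c_r^{1-d} \tau_j^{2(1-d)} = \left( I_0^{\mathrm{prop}} c_r^{1-d} \right) \tau_j^{-2(d-1)}.
\]
Setting $I_0 := I_0^{\mathrm{prop}} c_r^{1-d}$ then yields \eqref{i:error-regularization}. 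I would emphasize that, because $c_r$ carries the factor $\|f\|_{L^2(\gamma)}$, this new constant depends additionally on $f$, which is exactly the extra argument appearing in the statement of the corollary relative to Proposition~\ref{p:interface}.

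Finally I would dispose of the degenerate regime to make the argument uniform in $j$. If $r_j \ge 2\diam(\mathcal G(\gamma,\mathcal T_0)) \ge 2\diam(\mathcal G(\gamma,\mathcal T_j))$, then the while-loop of $\INTERFACE$ terminates at once, no cell is refined, and $\#(\widetilde{\mathcal T}_j) - \#(\mathcal T_j) = 0$, so the bound holds trivially. I do not anticipate any genuine obstacle here: the corollary is essentially bookkeeping on top of Proposition~\ref{p:interface}, and the only points demanding a little care are tracking the $f$-dependence introduced through \eqref{i:choice-regularization} and checking the applicability hypothesis of the proposition in both the refining and the non-refining regimes.
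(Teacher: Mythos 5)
Your proposal is correct and follows exactly the paper's own (one-line) argument: apply Proposition~\ref{p:interface} with $r=r_j$ and substitute $r_j\sim\tau_j^2$ from \eqref{i:choice-regularization}, the $f$-dependence of the new constant entering through the factor $\|f\|_{L^2(\gamma)}^{2(d-1)}$ hidden in $c_r^{1-d}$. Your additional check of the degenerate regime $r_j\ge 2\diam(\mathcal G(\gamma,\mathcal T_0))$ is a harmless extra precaution that the paper leaves implicit.
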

\begin{proof}
    The target estimate directly follows from \eqref{i:target-interface} by the relation $r\sim \tau^2$.
\end{proof}

\subsection{Performance of \texorpdfstring{$\SOLVE$}{SOLVE}}\label{ss:solve}
Let us review some estimates for the complexity of $\SOLVE$ following the analysis from \cite{cohen2012convergence}.

\subsubsection*{Contraction property}
One instrumental tool to evaluate the performance of $\SOLVE$ is the following contraction property \reviewblue{(cf. \cite[Theorem~4.3]{cohen2012convergence})}.

\begin{theorem}[contraction of $\SOLVE$]\label{t:solve-contraction}
    There exist two constants $\alpha\in (0,1)$ and $\widetilde\alpha>0$ depending on $\constShapeRegular$, $m$, $M$, and on the bulk parameter $\theta$ in $\emph{\SOLVE}$ such that for all $k\ge 0$,
    \[
        \vertiii{w_g-W_{k+1}}^2 + \widetilde\alpha\mathcal E(W_{k+1},\mathcal T_{k+1})^2
        \le \alpha^2 \reviewred{\Big(\vertiii{w_g-W_k}^2 + \widetilde\alpha \mathcal E(W_{k},\mathcal T_{k})^2\Big)} .
    \]
\end{theorem}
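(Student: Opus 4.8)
The plan is to prove the contraction property for $\SOLVE$ (Theorem~\ref{t:solve-contraction}) by establishing a one-step error reduction in the combined quantity $\vertiii{w_g-W_k}^2 + \widetilde\alpha\,\mathcal E(W_k,\mathcal T_k)^2$. The standard approach, following the framework of \cite{cascon2008quasi,cohen2012convergence}, rests on three ingredients: (i) a Galerkin orthogonality / Pythagoras relation tying consecutive errors, (ii) a local lower bound (efficiency) on marked cells, and (iii) the Dörfler marking property guaranteed by $\MARK$. First I would exploit that in $\SOLVE$ the marking is driven either by the data indicator (in the $\DATA$ branch) or by the full error indicator $\{e_k(T)\}$ satisfying the Dörfler condition \eqref{i:dorfler} with parameter $\theta$. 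The key is that after either branch, the refined subdivision $\mathcal T_{k+1}$ satisfies a bulk chasing estimate of the form $\mathcal E(W_k,\mathcal M_k)^2 \ge \theta^2\,\mathcal E(W_k,\mathcal T_k)^2$ on the marked set.

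The core of the argument proceeds as follows. By Galerkin orthogonality between $\mathbb V_k\subset\mathbb V_{k+1}$, one has the Pythagorean identity
\[
    \vertiii{w_g-W_k}^2 = \vertiii{w_g-W_{k+1}}^2 + \vertiii{W_{k+1}-W_k}^2 .
\]
The second step is a local perturbation (or ``estimator reduction'') estimate: since the indicator $\mathcal E$ consists of a jump residual part $\mathcal J$ and a data part $\mathcal D$, refining the marked cells strictly reduces the residual contribution there while the data part can only decrease under refinement. Quantitatively, one shows that for any $\varepsilon>0$,
\[
    \mathcal E(W_{k+1},\mathcal T_{k+1})^2 \le (1+\varepsilon)\Big(\mathcal E(W_k,\mathcal T_k)^2 - \lambda_0\,\mathcal E(W_k,\mathcal M_k)^2\Big) + (1+\varepsilon^{-1})\,C_0\,\vertiii{W_{k+1}-W_k}^2 ,
\]
where $\lambda_0\in(0,1)$ captures the guaranteed reduction on refined cells (a factor like $1-2^{-1/d}$ from bisection) and $C_0$ depends on $\constShapeRegular$. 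Combining this with the Dörfler property $\mathcal E(W_k,\mathcal M_k)^2\ge\theta^2\,\mathcal E(W_k,\mathcal T_k)^2$ and the Pythagoras identity, I would then form the linear combination $\vertiii{w_g-W_{k+1}}^2 + \widetilde\alpha\,\mathcal E(W_{k+1},\mathcal T_{k+1})^2$, choose $\varepsilon$ and $\widetilde\alpha$ small enough to absorb the cross term $\vertiii{W_{k+1}-W_k}^2$, and finally invoke the global upper bound \eqref{i:upper-lower-bound}, $\vertiii{w_g-W_k}\le\constRel\,\mathcal E(W_k,\mathcal T_k)$, to convert the negative $-\lambda_0\theta^2\,\mathcal E(W_k,\mathcal T_k)^2$ term into a contraction factor on the full energy error. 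The contraction constant $\alpha^2<1$ then emerges from balancing these choices, and its dependence on $\constShapeRegular,m,M,\theta$ is explicit through $\constRel,\constEff,\lambda_0$.

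The main obstacle I anticipate is handling the two-branch structure of $\SOLVE$ uniformly: the contraction must hold whether the step was a $\DATA$ call (triggered when $\mathcal D_k>\sigma_k=\lambda\theta\mathcal E_k$) or an ordinary error-indicator marking. In the $\DATA$ branch the reduction is driven purely by the data indicator $\mathcal D$, so I must verify that the condition $\mathcal D_k>\lambda\theta\mathcal E_k$ still forces enough of the \emph{total} indicator $\mathcal E_k$ to be reduced — this is exactly the role of the threshold $\lambda$, and reconciling the two cases into a single contraction constant is the delicate bookkeeping step. A secondary technical point is that the data part $\mathcal D$ need not satisfy a clean orthogonality, so its monotone decay under refinement and the efficiency bound \eqref{i:upper-lower-bound} must be used carefully rather than a Pythagoras-type identity. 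Since this theorem is stated as a direct citation of \cite[Theorem~4.3]{cohen2012convergence}, I would ultimately reduce the verification to checking that our bilinear form $A(\cdot,\cdot)$ and indicators satisfy the hypotheses of that reference, rather than reproving the contraction from scratch.
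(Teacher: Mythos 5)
Your proposal is correct and follows essentially the same route as the paper, which states this result as a direct consequence of \cite[Theorem~4.3]{cohen2012convergence} and provides no independent proof: the standard Pythagoras--estimator-reduction--D\"orfler argument you sketch, together with the observation that the threshold $\mathcal D_k>\lambda\theta\mathcal E_k$ makes the $\DATA$ branch act as a bulk-chasing step for the total indicator, is exactly the content of the cited theorem. Your closing remark---that verification reduces to checking that $A(\cdot,\cdot)$ and the indicators satisfy the hypotheses of that reference---is precisely the stance the paper takes.
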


\subsubsection*{Approximation classes}
We denote $\mathscr T_n$ the set of all conforming subdivisions generated from $\mathcal T_0$ satisfying $\#(\mathcal T)\le n$. Define the best error obtained in $\mathscr T_n$
\[
    \sigma_n(u)_{\Hunz}:=\inf_{\mathcal T\in \mathscr T_n}\vertiii{u-U_{\mathcal T}}
\]
with $U_{\mathcal T}\in \mathbb V(\mathcal T)$ denoting the Galerkin projection of $u$, \ie
\[
    A(U_{\mathcal T}, V) = \langle F, V\rangle_{H^{-1}(\Omega),\Hunz},
    \quad\forall V\in \mathbb V(\mathcal T)
\]
and it also satisfies that
\[
    \vertiii{u-U_{\mathcal T}} = \inf_{V\in \mathbb V(\mathcal T)} \vertiii{u-V} .
\]
Define the approximation class $\mathcal A^{s}$ with $s\in (0,\tfrac1d]$ to be the set of all $v\in\Hunz$ such that the following quasi-semi-norm
\[
    |v|_{\mathcal A^s} := \sup_{n\ge 1}\Big(n^s\sigma_n(v)_{\Hunz}\Big)
\]
is finite. \reviewblue{Due to the nonzero jump of the normal derivative of $u$ on $\gamma$ and according to the discussion from Section~10 of \cite{berrone2019optimal}}, the best possible convergence rate is given by \reviewred{$s=\tfrac{1}{2(d-1)}$}.

\subsubsection*{Performance of \texorpdfstring{$\DATA$}{DATA}}
The approximation class $\mathcal A^s$ provides the rate of convergence for the energy error $\vertiii{u-U_{\mathcal T}}$. Recalling that given $g\in L^2(\Omega)$, the total error $E(w_g,\mathcal T)$ defined in \eqref{e:total-error} consists of both the energy error and the data indicator. So we are also concerned with the rate of convergence for the data indicator $\mathcal D(g,\mathcal T)$. Here we assume that
\begin{assumption}\label{a:data}
    For $\tau>0$ and \reviewred{a fixed bulk parameter $\widetilde\theta\in (0,1)$}, set $\mathcal T^* = \DATA(\mathcal T, g, \tau,\widetilde\theta)$. Then for $s\in (0,\tfrac1d]$, there exists a positive constant $G_s$ (depending on $g$ and $\widetilde \theta$) satisfying
    \[
        \#(\mathcal T^*) - \#(\mathcal T) \le G_s \tau^{-1/s} .
    \]
\end{assumption}

\subsubsection*{Cardinality of refined cells in \texorpdfstring{$\SOLVE$}{SOLVE}}
In the routine $\SOLVE$, we need to estimate the cardinalities of $\mathcal M_k$ as well as the cells refined from $\DATA$. The latter comes from Assumption~\ref{a:data}. The estimate of the former requires the following bulk property (cf. \cite[Lemma~5.2]{cohen2012convergence}):

\begin{lemma}\label{l:bulk-property}
    Assume that the bulk parameter $\theta\in (0, \theta_*)$ with
    \begin{equation}\label{e:theta-condition}
        \theta_*  =\frac{1}{\constEff\sqrt{1+C_L^2}} .
    \end{equation}
    Let $\mathcal T^*$ be a refinement of $\mathcal T$ and denote $\mathcal R_{\mathcal T\to \mathcal T^*}$ the set all refined cells from $\mathcal T$ to $\mathcal T^*$. If $E(w_g,\mathcal T^*)\le \xi E(w_g,\mathcal T)$ with
    \begin{equation}\label{e:xi}
        \xi :=\sqrt{1-\frac{\theta^2}{\theta_*^2}} ,
    \end{equation}
    there holds $\mathcal E(W_g,\mathcal R_{\mathcal T\to \mathcal T^*}) \ge \theta \mathcal E(W_g,\mathcal T)$.
\end{lemma}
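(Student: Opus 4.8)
The plan is to route everything through the two Galerkin solutions $W:=W_g=\GAL(\mathcal T,g)$ and $W^*:=\GAL(\mathcal T^*,g)$, and to produce a \emph{lower} bound for the total error $E(w_g,\mathcal T^*)$ in terms of the portion $\mathcal E(W_g,\mathcal R_{\mathcal T\to\mathcal T^*})$ of the estimator supported on the refined cells. Combining such a bound with the hypothesis $E(w_g,\mathcal T^*)\le \xi\,E(w_g,\mathcal T)$ and the global efficiency bound of \eqref{i:upper-lower-bound} should then isolate $\mathcal E(W_g,\mathcal R_{\mathcal T\to\mathcal T^*})\ge\theta\,\mathcal E(W_g,\mathcal T)$ after an elementary rearrangement that uses precisely the definitions \eqref{e:theta-condition} of $\theta_*$ and \eqref{e:xi} of $\xi$.

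First I would record the Pythagorean identity coming from Galerkin orthogonality. Since refinement yields nested spaces $\mathbb V(\mathcal T)\subset\mathbb V(\mathcal T^*)\subset\Hunz$ and $\vertiii{\cdot}^2=A(\cdot,\cdot)$, the increment $W^*-W\in\mathbb V(\mathcal T^*)$ is $A$-orthogonal to $w_g-W^*$, so that $\vertiii{w_g-W}^2=\vertiii{w_g-W^*}^2+\vertiii{W^*-W}^2$. Rewriting both total errors through \eqref{e:total-error}, this identity turns into
\[
    E(w_g,\mathcal T^*)^2 = E(w_g,\mathcal T)^2 - \vertiii{W^*-W}^2 - \big(\mathcal D(g,\mathcal T)^2-\mathcal D(g,\mathcal T^*)^2\big).
\]

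The two subtracted terms are then controlled by the estimator on $\mathcal R:=\mathcal R_{\mathcal T\to\mathcal T^*}$. The data indicator changes only on refined cells and its children carry a smaller indicator, so $\mathcal D(g,\mathcal T)^2-\mathcal D(g,\mathcal T^*)^2\le \mathcal D(g,\mathcal R)^2\le \mathcal E(W_g,\mathcal R)^2$. For the discrete energy increment I would invoke the \emph{localized discrete upper bound} $\vertiii{W^*-W}^2\le C_L^2\,\mathcal E(W_g,\mathcal R)^2$, which is exactly where the constant $C_L$ appearing in \eqref{e:theta-condition} originates; this is the localized a-posteriori estimate of \cite{cohen2012convergence}. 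Substituting both bounds gives $E(w_g,\mathcal T^*)^2\ge E(w_g,\mathcal T)^2-(1+C_L^2)\,\mathcal E(W_g,\mathcal R)^2$. Inserting the hypothesis $E(w_g,\mathcal T^*)^2\le (1-\theta^2/\theta_*^2)\,E(w_g,\mathcal T)^2$, rearranging to $(1+C_L^2)\mathcal E(W_g,\mathcal R)^2\ge (\theta^2/\theta_*^2)E(w_g,\mathcal T)^2$, and finally using the efficiency bound $E(w_g,\mathcal T)\ge \mathcal E(W_g,\mathcal T)/\constEff$ together with the identity $(1+C_L^2)\constEff^2=\theta_*^{-2}$ collapses everything to $\mathcal E(W_g,\mathcal R)^2\ge\theta^2\mathcal E(W_g,\mathcal T)^2$, which is the claim.

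The main obstacle is the localized upper bound $\vertiii{W^*-W}\le C_L\,\mathcal E(W_g,\mathcal R)$: unlike the other ingredients it is not a one-line manipulation, since it requires showing that the difference of two discrete solutions is controlled by the estimator only on the refined region, rather than on all of $\mathcal T$. Establishing it rests on the local structure of the residual, a localized quasi-interpolation (bubble) argument, and the finite overlap of vertex stars, and it is precisely this constant that the paper imports from \cite{cohen2012convergence}. Once it is in hand, the remainder of the argument is purely algebraic, and the two places where care is needed are verifying the space nesting (so that the Pythagorean identity and hence the orthogonality hold, including for the hanging-node variant) and checking that the data-indicator difference indeed localizes to $\mathcal R$.
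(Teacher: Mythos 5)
Your argument is correct and coincides with the standard proof of this lemma: the paper itself states it without proof, citing Lemma~5.2 of \cite{cohen2012convergence}, and your chain — Galerkin orthogonality on nested spaces, the reduction of the data indicator on refined cells, the localized discrete upper bound $\vertiii{W^*-W}\le C_L\,\mathcal E(W_g,\mathcal R_{\mathcal T\to\mathcal T^*})$, and the final algebraic rearrangement using $(1+C_L^2)\constEff^2=\theta_*^{-2}$ — is exactly the cited argument. You also correctly identify the localized discrete upper bound as the one genuinely nontrivial ingredient that must be imported rather than derived in a line.
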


Using Assumption~\ref{a:data} and the above lemma, Lemma~5.3 of \cite{cohen2012convergence} implies that for each iterate $k$ in $\SOLVE$, we have
\begin{equation}\label{i:mk}
    \#(\mathcal M_k) \lesssim (|w_g|_{\mathcal A^s}+G_s)^{1/s} E(w_g,\mathcal T_k)^{-1/s} .
\end{equation}

\subsection{Performance of \texorpdfstring{$\REGSOLVE$}{REGSOLVE}}\label{sss:regsolve}
In this section, we shall adapt the results in the previous subsection to $\REGSOLVE$.
\subsubsection{Performance of \texorpdfstring{$\DATA$}{DATA} using \texorpdfstring{$F^r$}{}}
To show that Assumption~\ref{a:data} holds for $g\in L^2(\Omega)$ with $s=\tfrac1d$, starting from a conforming initial subdivision $\mathcal T_0$ \reviewred{and using a greedy algorithm (see Algorithm~\ref{alg:greedy}), we can find a refinement $\mathcal T$ of $\mathcal T_0$ so that the data indicator $\mathcal D(g,\mathcal T)$ is smaller than a target tolerance $\tau$.}

\begin{algorithm}
    \begin{algorithmic}
        \STATE $\mathcal T=\mathcal T_0$
        \WHILE{$\mathcal D(g,\mathcal T) > \tau$}
        \STATE $T = \text{argmax}\{d(g,T,\mathcal T)\}$;
        \STATE $\mathcal T = \REFINE(\mathcal T,\{T\})$;
        \ENDWHILE
        \RETURN $\mathcal T$;
    \end{algorithmic}
    \caption{$\mathcal T = \GREEDY(\mathcal T_0, g,\tau)$}
    \label{alg:greedy}
\end{algorithm}
According to \cite[Theorem~7.3]{cohen2012convergence}, there exists a positive constant $K$ depending only on the shape regularity constant $\constShapeRegular$ such that
\[
    \#(\mathcal T) - \#(\mathcal T_0) \le K\|g\|_{L^2(\Omega)}^2\tau^{-d}.
\]
The above result can be extended by replacing $\mathcal T_0$ with its refinement $\mathcal T$, \ie $\mathcal T^* = \GREEDY(\mathcal T, g,\tau)$, and there holds
\begin{equation}\label{i:greedy-old}
    \#(\mathcal T^*) - \#(\mathcal T) \le K\|g\|_{L^2(\Omega)}^2\tau^{-d}.
\end{equation}
This is because the marked cells in $\GREEDY(\mathcal T, g,\tau)$ are contained in those generated by $\GREEDY(\mathcal T_0, g,\tau)$; see \cite[Proposition~2]{bonito2013adaptive} for a detailed discussion. Hence, any $L^2(\Omega)$ function $g$ satisfies Assumption~\ref{a:data} with $s=\tfrac1d$ and \reviewred{$\|g\|_{L^2(\Omega)}^2 \sim G_{1/d}$}. When $g=F^r$ as defined in Remark~\ref{r:regF}, the constant $G_{1/d}$ may still depend on $r$ in an arbitrary refinement of $\mathcal T_0$. However, the refinement process in $\DATA$ is based on the subdivisions generated by $\INTERFACE$. So cells marked in $\GREEDY$ should be located in a neighborhood of a tubular extension of $\gamma$, whose width can be controlled by the regularization parameter $r$. In order to see the dependence of \eqref{i:greedy-old} on $r$, we modify the argument of Lemma~7.3 of \cite{cohen2012convergence} and a detailed proof is provided in the Appendix.

\begin{lemma}[approximation class for $F^r$]\label{l:greedy}
    Assume that $f\in L^\infty(\gamma)$ and $F^r$ is defined as in Remark~\ref{r:regF} for any $r>0$. Letting the initial subdivision $\mathcal T_0$ satisfy \eqref{a:initial-grid}, we define $\widetilde{\mathcal T}=\emph{\INTERFACE}(\mathcal T_0, r)$ with $r<\constBoundary$. For any $\tau>0$, the cardinality of refined cells in $\mathcal T^*=\emph{\GREEDY}(\widetilde{\mathcal T}, F^r,\tau)$ can be bounded by
    \[
        \#(\mathcal T^*) - \#(\widetilde{\mathcal T}) \le K_0 r^{1-d/2}\|f\|_{L^\infty(\gamma)}^d \tau^{-d} ,
    \]
    where the constant $K_0$ is independent of $r$ and $\tau$. This implies that Assumption~\ref{a:data} holds for $F^r$ with $s=\tfrac12$ and $G_{1/2} \sim r^{1-d/2}\|f\|_{L^\infty(\gamma)}^d$ when $\mathcal T=\widetilde{\mathcal T}$.
\end{lemma}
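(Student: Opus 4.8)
The plan is to revisit the counting behind \cite[Theorem~7.3]{cohen2012convergence}, replacing the crude use of $\|F^r\|_{L^2(\Omega)}$ by sharper pointwise and support information on $F^r$. First I would record the two structural estimates that make the difference. Writing $\mathcal N_\rho := \{x\in\Omega : \dist(x,\gamma)\le\rho\}$, the support of $\delta^r$ in $B_{r_0 r}(0)$ together with $f\in L^\infty(\gamma)$ gives, through Remark~\ref{r:regF},
\[
\mathrm{supp}\,F^r\subset\mathcal N_{r_0 r}
\quad\text{and}\quad
\|F^r\|_{L^\infty(\Omega)}\le\|f\|_{L^\infty(\gamma)}\,\sup_{x}\int_\gamma\delta^r(x-y)\diff\sigma_y\lesssim r^{-1}\|f\|_{L^\infty(\gamma)} ,
\]
where the last bound uses $\delta^r\le\|\psi\|_{L^\infty}r^{-d}$ and the fact that, $\gamma$ being Lipschitz, $\mathcal H^{d-1}(\gamma\cap B_{r_0 r}(x))\lesssim r^{d-1}$. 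Since $\dist(\gamma,\partial\Omega)>\constBoundary>r$, the tube $\mathcal N_{r_0 r}$ stays interior and has volume $|\mathcal N_{r_0 r}|\lesssim r\,|\gamma|\sim r$. Because $F^r$ vanishes off $\mathcal N_{r_0 r}$, any cell with $d(F^r,T,\mathcal T)>0$ must meet $\mathcal N_{r_0 r}$, so $\GREEDY$ only ever marks cells meeting the tube, and after $\INTERFACE$ every such cell has size $\lesssim r$.

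Next I would exploit the \emph{monotonicity} of the maximal indicator along the greedy sequence. If $T$ is the cell marked at a given step, its children $T'$ satisfy $d(F^r,T',\mathcal T')=h_{T'}\|F^r\|_{L^2(T')}<h_T\|F^r\|_{L^2(T)}=d(F^r,T,\mathcal T)$, and the conforming completion only refines cells whose indicator was already $\le$ that of $T$ (as $T$ was the argmax); hence the value $\mu_k$ of the cell marked at step $k$ is non-increasing in $k$. Fix
\[
h_* := c_0\,\tau\, r^{1/2}/\|f\|_{L^\infty(\gamma)}
\quad\text{and}\quad
\eta_* := r^{-1}\|f\|_{L^\infty(\gamma)}\,h_*^{(d+2)/2} .
\]
The $L^\infty$ bound then turns the selection rule into a lower bound on cell size: for a marked (tube) cell one has $\mu_k = d(F^r,T,\mathcal T)\le r^{-1}\|f\|_{L^\infty(\gamma)}\,h_T^{(d+2)/2}$, so $\mu_k\ge\eta_*$ forces $h_T\ge h_*$. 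Consequently every cell $\GREEDY$ refines while $\mu_k\ge\eta_*$ has size in $[h_*,\,Cr]$.

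I would then close by a dyadic count over scales. At each scale $h=2^{-j}\in[h_*,Cr]$ the marked cells of that scale are pairwise disjoint nodes of the refinement forest contained in the tube, so there are $\lesssim |\mathcal N_{r_0 r}|/h^d\lesssim r/h^d$ of them; the geometric sum in $j$ is dominated by the finest scale and gives at most $\lesssim r/h_*^d\sim \|f\|_{L^\infty(\gamma)}^d\,r^{1-d/2}\tau^{-d}$ marked cells with $\mu_k\ge\eta_*$. A direct substitution of $h_*,\eta_*$ shows that on the first mesh $\mathcal M$ all of whose cells have indicator $<\eta_*$ (its tube cells having size $\gtrsim h_*$) one gets $\mathcal D(F^r,\mathcal M)^2\lesssim\eta_*^2\,(r/h_*^d)\sim c_0^2\,\tau^2$; choosing $c_0$ small enough makes $\mathcal D(F^r,\mathcal M)\le\tau$, so by monotonicity $\GREEDY$ terminates no later than $\mathcal M$ and the number $n$ of greedy steps is bounded by the count above. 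Since each step marks a single cell, the complexity estimate \eqref{i:complexity-refine} converts steps into added cells up to the factor $\constComplexity$, yielding
\[
\#(\mathcal T^*)-\#(\widetilde{\mathcal T})\le\constComplexity\, n\lesssim \|f\|_{L^\infty(\gamma)}^d\,r^{1-d/2}\tau^{-d} ,
\]
which is the claim (all constants absorbed into $K_0$) and verifies Assumption~\ref{a:data} for $F^r$ with the asserted constant $G\sim r^{1-d/2}\|f\|_{L^\infty(\gamma)}^d$.

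The main obstacle is the bookkeeping in the middle step: one must make the monotonicity of $\mu_k$ robust to the conforming completion of each refinement, and then combine it with the $L^\infty$ bound so that the selection criterion of $\GREEDY$ genuinely prevents over-refinement below scale $h_*$. Once over-refinement is excluded, the scale-by-scale counting against the tube volume is routine, and the regime $\tau\gtrsim\|f\|_{L^\infty(\gamma)}r^{1/2}$ (where $h_*\gtrsim r$, $\widetilde{\mathcal T}$ already meets the tolerance, and $\GREEDY$ adds no cells) is a trivial special case still obeying the stated bound.
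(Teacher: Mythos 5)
Your argument is correct in substance and arrives at the stated bound, but it takes a genuinely different combinatorial route from the paper. The paper follows the template of \cite[Theorem~7.3]{cohen2012convergence}: it sorts the marked cells into dyadic volume classes $\mathcal B_j$ relative to $|U_{cr}|$, plays the packing bound $\#(\mathcal B_j)<2^{j+1}$ against the mass bound $\delta^2\#(\mathcal B_j)\le 2^{-2j/d}|U_{cr}|^{2/d}\|F^r\|_{L^2(U_{cr})}^2$, optimizes the crossover index $j_0$, and only then converts the resulting bound on $N$ into a bound in terms of $\tau$ via $\tau\le\delta\sqrt{\#(\mathcal T^{N-1})}$; the gain over the generic $L^2$ theory enters solely through $|U_{cr}|\sim r$ and $\|F^r\|_{L^2(U_{cr})}\lesssim r^{-1/2}\|f\|_{L^\infty(\gamma)}$. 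You instead use the pointwise bound $\|F^r\|_{L^\infty}\lesssim r^{-1}\|f\|_{L^\infty(\gamma)}$ to convert the greedy selection rule into a \emph{minimal cell size} $h_*\sim\tau r^{1/2}/\|f\|_{L^\infty(\gamma)}$ for every cell marked before termination, and then count by pure volume packing of disjoint same-generation cells in a tube of measure $\sim r$, with a separate stopping argument showing $\mathcal D\le\tau$ once all indicators drop below $\eta_*$. Both proofs ultimately rest on the same two structural facts about $F^r$ (support in an $O(r)$-tube and the $L^\infty$ bound, which is exactly what the paper's Step~4 uses to estimate $\|F^r\|_{L^2(U_{cr})}$), but your counting avoids the $2d/(2+d)$ exponents and the $j_0$ case split at the price of leaning on the $L^\infty$ structure, so it would not extend to data that is merely $L^2$ near $\gamma$, whereas the paper's argument is the specialization of a general $L^2$ result.

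Two soft spots, neither fatal. First, your claim that after $\INTERFACE$ every cell meeting $\mathrm{supp}(F^r)$ has size $\lesssim r$ needs a mesh-grading argument ($h_T\lesssim h_{T'}+\dist(T,T')$), since $\INTERFACE$ only controls cells that actually intersect $\gamma$; the paper's assertion ``$T^i\subset U_{cr}$'' glosses over the same point, so this is a shared, repairable gap. Second, in the termination step the parenthetical ``its tube cells having size $\gtrsim h_*$'' is not quite the right justification for bounding the number of nonzero-indicator cells of $\mathcal M$ (conformity completions can in principle produce smaller cells); the clean way to close it is to bound $\#\{\text{tube cells of }\mathcal M\}$ by $\#\{\text{tube cells of }\widetilde{\mathcal T}\}+\constComplexity n\lesssim r^{1-d}+r/h_*^d$ using \eqref{i:complexity-refine} and your step count $n$, which suffices in the nontrivial regime $h_*\lesssim r$.
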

\begin{remark}\label{r:lemma-membership}
    Following the proof of \cite[Proposition~2]{bonito2013adaptive}, we can extend the results in Lemma~\ref{l:greedy} by replacing $\widetilde{\mathcal T}$ with any of its refinements. More precisely speaking, let $\mathcal T^+$ be any refinement of $\widetilde{\mathcal T}$, and $\mathcal T^* = \GREEDY(F^r,\mathcal T^+,\tau)$. Then,
    \[
        \#(\mathcal T^*) - \#(\mathcal T^+) \lesssim r^{1-d/2}\|f\|_{L^\infty(\gamma)}^d \tau^{-d} .
    \]
\end{remark}

\reviewblue{
    \begin{remark}\label{r:lp}
        An estimate similar to the one in Lemma~\ref{l:greedy} could be also
        obtained when the local data indicator in \GREEDY is replaced by the
        surrogate $L^p(\Omega)$ data indicator defined by (7.1) of
        \cite{cohen2012convergence}. Here $p=\tfrac{2d}{d+2}$ so that
        $L^p(\Omega)$ is on the same nonlinear Sobolev scale of
        $H^{-1}(\Omega)$. Note that $\|F^r\|_{L^p(\Omega)}\lesssim
            \|f\|_{L^\infty(\gamma)}r^{1/p-1}=\|f\|_{L^\infty(\gamma)}r^{1/d-1/2}$.
        Applying \cite[Lemma~7.3]{cohen2012convergence} directly we get
        \[
            \#(\mathcal T^*) - \#(\widetilde{\mathcal T})
            \lesssim \|F^r\|^d_{L^p(\Omega)} \tau^{-d} \lesssim r^{1-d/2}\|f\|_{L^\infty(\gamma)}^d \tau^{-d} .
        \]
    \end{remark}
    \begin{remark}
        We note that by treating $F^r$ as an $L^2(\Omega)$ data, Lemma~\ref{l:greedy} also reveals the dependency of $r$ for for the decay of the oscillation $\osc(F^r,\mathcal T)$.
    \end{remark}
}

\reviewblue{Now we are in a position to verify Assumption~\ref{a:data} when $g=F^r$. The proof follows \cite[Theorem~7.5]{cohen2012convergence} using a contraction property of $\mathcal D(F^r,\mathcal T)$, a bulk property, and Lemma~\ref{l:greedy}. Here we again omit the proof.}

\begin{corollary}[performance of $\DATA$]\label{c:data}
    Under the assumptions provided by Lemma~\ref{l:greedy}, Assumption~\ref{a:data} holds with $s=\tfrac1d$ and $g=F^r$ starting from $\widetilde{\mathcal T}=\emph{\INTERFACE}(\mathcal T_0, r)$. Precisely speaking, given a refinement $\mathcal T$ of $\widetilde{\mathcal T}$, \reviewred{let $\mathcal T^*$ be the output of $\emph{\DATA}(\mathcal T, F^r, \tau,\widetilde\theta)$ with a fixed $\widetilde\theta\in (0,1)$. Then, there exists a constant $K_0>0$ not depending on $r$ or $\tau$ (but depending on $\widetilde \theta$) satisfying}
    \[
        \#(\mathcal T^*) - \#(\mathcal T) \le K_0 r^{1-d/2}\tau^{-d} .
    \]
\end{corollary}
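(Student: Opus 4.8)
The plan is to mirror the proof of Theorem~7.5 in \cite{cohen2012convergence}, treating $\DATA$ as a pure thresholding loop for the data indicator $\mathcal D(F^r,\cdot)$ and showing that its cumulative cardinality matches, up to the uniform factor $r^{1-d/2}$, the cost of the greedy approximation analyzed in Lemma~\ref{l:greedy}. Write $\mathcal T_0' := \mathcal T$ for the input mesh (a refinement of $\widetilde{\mathcal T}$) and let $\mathcal T_0', \mathcal T_1', \dots, \mathcal T_K' = \mathcal T^*$ be the meshes produced by the successive $\MARK$/$\REFINE$ steps of $\DATA$, with $\mathcal D_k := \mathcal D(F^r,\mathcal T_k')$. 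Each $\mathcal T_k'$ is again a refinement of $\widetilde{\mathcal T}$, which is what lets us invoke Remark~\ref{r:lemma-membership} uniformly along the loop.

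First I would establish a contraction of the data indicator. Because $d(F^r,T,\mathcal T)^2 = h_T^2\|F^r\|_{L^2(T)}^2$ with $h_T = |T|^{1/d}$, subdividing a cell into $q$ children multiplies its contribution by the fixed factor $q^{-2/d}<1$; since $\MARK$ selects a D\"orfler set $\mathcal M_k$ with $\mathcal D(F^r,\mathcal M_k) \ge \widetilde\theta\,\mathcal D_k$, refining it (further conforming refinements only decrease $\mathcal D$) gives $\mathcal D_{k+1}^2 \le \big(1-(1-q^{-2/d})\widetilde\theta^2\big)\mathcal D_k^2 =: \rho\,\mathcal D_k^2$ with $\rho\in(0,1)$ depending only on $\widetilde\theta$ and the refinement rule, hence independent of $r$. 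Consequently $\mathcal D_k \le \rho^{k/2}\mathcal D_0$, the loop terminates after finitely many steps $K$, and $\mathcal D_k > \tau$ for every $k<K$.

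Next I would control $\#(\mathcal M_k)$ by the greedy cost through a bulk property for the data indicator analogous to Lemma~\ref{l:bulk-property}: if $\mathcal T^+$ refines $\mathcal T_k'$ with $\mathcal D(F^r,\mathcal T^+) \le \xi\,\mathcal D_k$, then, since unrefined cells keep their indicator while refined ones only shrink, the set $\mathcal R$ of refined cells obeys $\mathcal D(F^r,\mathcal R)^2 \ge \mathcal D_k^2-\mathcal D(F^r,\mathcal T^+)^2 \ge (1-\xi^2)\mathcal D_k^2$. Choosing $\xi := \sqrt{1-\widetilde\theta^2}$ makes $\mathcal R$ itself satisfy the D\"orfler condition of parameter $\widetilde\theta$, so the minimal set chosen by $\MARK$ has $\#(\mathcal M_k) \le \#(\mathcal R)$. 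Applying this with the optimal refinement $\mathcal T^+ = \GREEDY(\mathcal T_k', F^r, \xi\mathcal D_k)$ and invoking Remark~\ref{r:lemma-membership} (valid because $\mathcal T_k'$ refines $\widetilde{\mathcal T}$) yields $\#(\mathcal M_k) \lesssim r^{1-d/2}\|f\|_{L^\infty(\gamma)}^d(\xi\mathcal D_k)^{-d} \sim r^{1-d/2}\|f\|_{L^\infty(\gamma)}^d\mathcal D_k^{-d}$, with hidden constant independent of $r$, $\tau$ and $k$.

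Finally I would assemble the estimate. By the refinement-complexity bound \eqref{i:complexity-refine}, $\#(\mathcal T^*)-\#(\mathcal T) \lesssim \sum_{k=0}^{K-1}\#(\mathcal M_k) \lesssim r^{1-d/2}\|f\|_{L^\infty(\gamma)}^d\sum_{k=0}^{K-1}\mathcal D_k^{-d}$. The contraction $\mathcal D_{k+1}\le\sqrt{\rho}\,\mathcal D_k$ together with $\mathcal D_{K-1}>\tau$ gives $\mathcal D_k > \rho^{-(K-1-k)/2}\tau$, hence $\mathcal D_k^{-d} < \rho^{(K-1-k)d/2}\tau^{-d}$, and the sum is dominated by the convergent geometric series $\tau^{-d}\sum_{j\ge0}\rho^{jd/2} = (1-\rho^{d/2})^{-1}\tau^{-d}$. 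Absorbing $\|f\|_{L^\infty(\gamma)}^d$ and the $\widetilde\theta$-dependent constants into $K_0$ gives $\#(\mathcal T^*)-\#(\mathcal T) \le K_0\,r^{1-d/2}\tau^{-d}$, which is Assumption~\ref{a:data} with $s=\tfrac1d$. I expect the main obstacle to be neither the contraction nor the summation (both standard), but verifying that no extra $r$-dependence sneaks in: the per-step cardinality must carry exactly the factor $r^{1-d/2}$ and nothing more, which hinges on Remark~\ref{r:lemma-membership} holding with an $r^{1-d/2}$ constant that is uniform over all refinements of $\widetilde{\mathcal T}$, and on $\rho$ and $\xi$ being $r$-independent; tracking this uniformity through every intermediate mesh $\mathcal T_k'$ is the delicate bookkeeping of the argument.
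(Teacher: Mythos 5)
Your proposal is correct and follows exactly the route the paper indicates for this corollary: the paper omits the proof but states that it "follows \cite[Theorem~7.5]{cohen2012convergence} using a contraction property of $\mathcal D(F^r,\mathcal T)$, a bulk property, and Lemma~\ref{l:greedy}," and your three steps (geometric decay of the data indicator under D\"orfler marking, the data-indicator analogue of Lemma~\ref{l:bulk-property} compared against the greedy refinement via Remark~\ref{r:lemma-membership}, and the geometric-series summation through \eqref{i:complexity-refine}) are precisely those ingredients. The bookkeeping you flag at the end is handled correctly: the only $r$-dependence enters through the per-step greedy bound, and $\rho$, $\xi$ depend only on $\widetilde\theta$ and the refinement rule.
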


\subsubsection{Quasi-monotoniciy of the data indicator}
The following lemma provides a quasi-monotonicity of $\mathcal D(F^r,\mathcal
    T)$ with respect to $r$. We note that this property relies on some additional
hypothesis on the forcing data $f$ and on the nonnegativity of $\delta^r$.
\begin{lemma}\label{l:quasi-monotonicity}
    Given $r_2<r_1$, let $\mathcal T$ be a refinement of $\emph{\INTERFACE}(\mathcal T_0, r_2)$. Then there holds that
    \[
        \mathcal D(F^{r_2}, \mathcal T) \lesssim \widetilde \beta^{d}
        \mathcal D(F^{r_1}, \mathcal T) + r_2 ,
    \]
    where $\widetilde \beta=\tfrac{r_2}{r_1}<1$.
\end{lemma}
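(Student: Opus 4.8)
The plan is to exploit the three structural ingredients flagged just before the statement: the nonnegativity and the monotonicity of $\delta^r$ from Assumption~\ref{a:app-dirac}, and the controlled sign changes of $f$ encoded by the set $I$ in \eqref{e:sign-change}. Writing $F^r$ in the Fubini form of Remark~\ref{r:regF}, I would first split $f=\sum_{j=1}^{M_\gamma} f\chi_{\gamma_j}$ along the single-sign pieces $\gamma_j$, which induces $F^r=\sum_j F^r_j$ with $F^r_j(x)=\int_{\gamma_j} f\,\delta^r(x-\cdot)\,\diff\sigma$. Because $\delta^r\ge 0$, each $F^r_j$ retains the constant sign of $f$ on $\gamma_j$, so no cancellation occurs within a single piece. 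I would then partition the cells of $\mathcal T$ into ``interior'' cells, whose distance to $I$ exceeds the kernel reach $r_0 r_1$ (so that $\delta^{r}(x-\cdot)$ meets $\gamma$ on only one $\gamma_j$), and a collar of cells lying within distance $\sim r_1$ of $I$.

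For the interior cells I would invoke the monotonicity item of Assumption~\ref{a:app-dirac}: since $\psi(\cdot/r_2)\le\psi(\cdot/r_1)$, one has $r_2^d\delta^{r_2}\le r_1^d\delta^{r_1}$ pointwise, so the two kernels are comparable on their common support up to the volume-scaling factor $\widetilde\beta^d=(r_2/r_1)^d$. On an interior cell $T$ the integrand keeps a single sign, so this kernel comparison transfers to a cellwise bound $\|F^{r_2}\|_{L^2(T)}\lesssim\widetilde\beta^{d}\|F^{r_1}\|_{L^2(T)}$; summing the weighted contributions $h_T^2\|\cdot\|_{L^2(T)}^2$ over the interior cells then produces the main term $\widetilde\beta^{d}\,\mathcal D(F^{r_1},\mathcal T)$. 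Here I would use that $\mathcal T$ refines $\INTERFACE(\mathcal T_0,r_2)$, so that near $\gamma$ (which contains the support of every $F^r_j$) the cells satisfy $h_T\lesssim r_2$ and, by the quasi-uniformity assumption \eqref{a:initial-grid}, are comparable in size, keeping the weights $h_T$ aligned between the two indicators.

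The remaining collar around $I$ is where the single-sign argument breaks down, and this is the step I expect to be the main obstacle: there the supports of several $\gamma_j$ fall inside one kernel footprint and $F^{r_1}$ may be depleted by cancellation, so it can no longer control $F^{r_2}$. I would instead bound this contribution absolutely, using the elementary pointwise estimate $\|F^r\|_{L^\infty(\Omega)}\lesssim\|f\|_{L^\infty(\gamma)}\,r^{-1}$, the mesh resolution $h_T\lesssim r_2$ on the collar (again from $\INTERFACE$ and \eqref{a:initial-grid}), and the fact that the $r_1$-neighborhood of $I$ has small volume because $I$ has finite co-dimension-two measure, as guaranteed by the hypothesis accompanying \eqref{e:sign-change}. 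Collecting these gives the additive $r_2$ term and closes the estimate. The delicate points will be the precise volume count of the collar against the weight $h_T^2\|F^{r_2}\|_{L^2(T)}^2$ and the verification that this residual is genuinely absorbed into $+\,r_2$ rather than polluting the main term; this is exactly what forces the structural assumptions on $f$ together with the nonnegativity and monotonicity of $\delta^r$, without which the comparison between $\mathcal D(F^{r_2},\mathcal T)$ and $\mathcal D(F^{r_1},\mathcal T)$ could not be completed.
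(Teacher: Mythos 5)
Your proposal follows essentially the same route as the paper's proof: write $F^r$ in the Fubini form, discard cells away from the $r_2$-neighborhood of $\gamma$ (where the local indicator vanishes), treat the cells near the sign-change set $I$ of \eqref{e:sign-change} by an absolute bound exploiting $h_T\lesssim r_2$ from \INTERFACE, and on the remaining cells convert the kernel monotonicity of Assumption~\ref{a:app-dirac}(4), via the nonnegativity of $\delta^r$ and the single sign of $f$ there, into a cellwise comparison $d(F^{r_2},T,\mathcal T)\lesssim \widetilde\beta^{d}\, d(F^{r_1},T,\mathcal T)$. The one substantive difference is the width of the collar you excise around $I$: you remove all cells within distance $\sim r_0 r_1$ of $I$, so that the \emph{larger} kernel $\delta^{r_1}(x-\cdot)$ meets only one single-sign piece $\gamma_j$, whereas the paper's set $\mathcal B$ only contains cells meeting $B_{r_2}(x_0)$ for $x_0\in I$. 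Your wider collar is actually what the single-sign comparison requires: on a cell at distance between $r_2$ and $r_1$ from $I$, the kernel $\delta^{r_1}(x-\cdot)$ can straddle a sign change, so $F^{r_1}(x)$ may be depleted by cancellation while $F^{r_2}(x)$ is not, and the cellwise comparison is delicate precisely on that annulus; your variant sidesteps this.

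The price — and the gap you flag but do not close — is the volume count on the collar. For a collar cell the absolute bound gives $h_T^2\|F^{r_2}\|_{L^2(T)}^2\lesssim |T\cap \mathrm{supp}\,F^{r_2}|$ (using $h_T\lesssim r_2$ and $\|F^{r_2}\|_{L^\infty(\Omega)}\lesssim \|f\|_{L^\infty(\gamma)} r_2^{-1}$), and the region over which these contributions accumulate is $\{x:\mathrm{dist}(x,I)\lesssim r_1,\ \mathrm{dist}(x,\gamma)\lesssim r_2\}$, whose volume is $\sim r_1 r_2$ times the co-dimension-two measure of $I$ — not $r_2^2$. Your setup therefore yields an additive term $\sqrt{r_1 r_2}$ rather than $r_2$. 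This is harmless where the lemma is invoked (consecutive radii with bounded ratio, and the additive term only needs to be $\lesssim \tau_j$), but it is not literally the stated $+\,r_2$; recovering $+\,r_2$ requires the paper's thinner collar together with a separate justification of the kernel comparison on the intermediate annulus. Finally, note that both you and the paper deduce $\delta^{r_2}\le \widetilde\beta^{d}\delta^{r_1}$ from $\psi(x/r_2)\le\psi(x/r_1)$; unwinding the $r^{-d}$ normalization in \eqref{e:dirac-approximation} actually gives $\delta^{r_2}\le \widetilde\beta^{-d}\delta^{r_1}$, so the prefactor in the main term deserves a second look in either version.
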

\begin{proof}
    \reviewblue{We investigate the local data indicator for $F^{r_2}$ when i)
        $T$ is away from the tubular neighborhood of $\gamma$ with radius $r_2$,
        ii) $T$ intersects the tubular neighborhood and $f$ changes sign in
        $T$, and iii) $T$ intersects with the tubular neighborhood and $f$ is
        non-negative/non-positive. Clearly, $d(F^{r_2},T,\mathcal T) = 0$ when
        $\dist(T,\gamma) > r_2$. We shall focus on the other cases.}

    We recall from the configuration of $f$ in Section~\ref{s:prelim} that the set $I$ \reviewblue{defined in \eqref{e:sign-change}} separates the sign of $f$ in $\gamma$. Define
    \[
        \mathcal B:= \{T\in \mathcal T : T\cap B_{r_2}(x_0) \neq \emptyset \text{ for some } x_0\in I\}.
    \]
    \reviewblue{Since $h_T\lesssim r_2$ for $T\in\mathcal B$, there holds
        \begin{equation}\label{i:measure-sum}
            \sum_{T\in \mathcal B}|T|\lesssim r_2^{2} .
        \end{equation}
        Here the hidden constant above depends on the measure of $I$ in
        co-dimension 2.} Now we bound $d(F^{r_2},T ,\mathcal T)$. If $T\notin
        \mathcal B$, since $\delta^r$ is nonnegative, \reviewblue{and thanks to
        Assumption~\ref{a:app-dirac}(4),} we have $\delta^{r_2} \le \widetilde
        \beta^{d} \delta^{r_1}$. Hence,
    \[
        d(F^{r_2}, T,\mathcal T) \le \widetilde \beta^{d} d(F^{r_1}, T, \mathcal T).
    \]
    If $T\in \mathcal B$, there holds
    \[
        \reviewblue{d(F^{r_2}, T, \mathcal T)^2  \lesssim \frac{h_T^2}{r_2^{2d}} \int_T|B_{r_2}(x)\cap \gamma|^2 \diff x  \lesssim \frac{h_T^2}{r_2^{2d}} r_2^{2(d-1)} |T|\lesssim |T|} .
    \]
    By summing up all contributions above and invoking \eqref{i:measure-sum}, we arrive at
    \[
        \begin{aligned}
            \mathcal D(F^{r_2},\mathcal T)^2 & = \sum_{T\in \mathcal B} d(F^{r_2}, T, \mathcal T)^2 + \sum_{T\notin \mathcal B} d(F^{r_2}, T, \mathcal T)^2                                                                                            \\
                                             & \reviewblue{\lesssim \sum_{T\in \mathcal B} |T|} + \sum_{T\notin \mathcal B} \widetilde \beta^{2d} d(F^{r_1}, T, \mathcal T)^2 \lesssim r_2^2 + \widetilde \beta^{2d} \mathcal D(F^{r_1}, \mathcal T) ,
        \end{aligned}
    \]
    which concludes the proof.
\end{proof}
\begin{remark}
    If $f$ is nonnegative or non-positive along $\gamma$, according to the proof of Lemma~\ref{l:quasi-monotonicity}, we immediately get
    $
        \mathcal D(F^{r_2}, \mathcal T) \le \widetilde \beta^{d}
        \mathcal D(F^{r_1}, \mathcal T) .
    $
\end{remark}

\subsubsection{Performace of each subroutine in \texorpdfstring{$\REGSOLVE$}{}}
In terms of the approximation class for $\uepsilon$, Lemma~3.2 of \cite{bonito2013adaptive} enlightens us to exploit the fact that $\uepsilon$ is an approximation of $u$ and then to characterize approximation properties of $\uepsilon$ with the approximation class of $u$, \ie using the quasi-semi-norm $|u|_{\mathcal A^s}$ for some $s\in (0,\tfrac1d)$.

\begin{lemma}[Lemma~3.2 of \cite{bonito2013adaptive}]\label{l:epsilon-approximation}
    If $\vertiii{u-\uepsilon} < \varepsilon$ for some $\varepsilon >0$, then $\uepsilon$ is a \emph{$2\varepsilon$-approximation} to $u$ of order $s$: for all $\delta>2\varepsilon$, there exists a positive integer $n$ such that
    \[
        \sigma_n(\uepsilon)_{\Hunz} \le \delta, \quad\text{ and }\quad
        n\lesssim |u|_{\mathcal A^s}^{1/s}\delta^{-1/s} .
    \]
\end{lemma}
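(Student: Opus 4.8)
The plan is to prove the bound by a simple triangle-inequality split, combined with the membership $u\in\mathcal A^s$ encoded in the quasi-semi-norm $|u|_{\mathcal A^s}$. The only quantity we can control directly is the best energy error for $u$ over subdivisions of bounded cardinality, so the strategy is to transfer that control to $\uepsilon$ at the cost of the uniform perturbation $\varepsilon$.

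First I would fix an arbitrary conforming subdivision $\mathcal T\in\mathscr T_n$ and denote by $U^u_{\mathcal T}\in\mathbb V(\mathcal T)$ the Galerkin projection of $u$. Since the Galerkin projection of $\uepsilon$ is the energy-best approximation of $\uepsilon$ in $\mathbb V(\mathcal T)$, and $U^u_{\mathcal T}\in\mathbb V(\mathcal T)$ is an admissible competitor, I would write
\[
\inf_{V\in\mathbb V(\mathcal T)}\vertiii{\uepsilon-V}\le\vertiii{\uepsilon-U^u_{\mathcal T}}\le\vertiii{\uepsilon-u}+\vertiii{u-U^u_{\mathcal T}}<\varepsilon+\vertiii{u-U^u_{\mathcal T}},
\]
using the hypothesis $\vertiii{u-\uepsilon}<\varepsilon$. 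Taking the infimum over all $\mathcal T\in\mathscr T_n$ and recalling the definition of $\sigma_n$, this gives the key inequality $\sigma_n(\uepsilon)_{\Hunz}\le\varepsilon+\sigma_n(u)_{\Hunz}$, valid for every $n\ge 1$.

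Next I would invoke the definition of the quasi-semi-norm, which yields $\sigma_n(u)_{\Hunz}\le|u|_{\mathcal A^s}\,n^{-s}$ for all $n\ge1$. Given $\delta>2\varepsilon$, I want $\sigma_n(\uepsilon)_{\Hunz}\le\delta$, for which it suffices that $|u|_{\mathcal A^s}\,n^{-s}\le\delta-\varepsilon$. I would therefore choose $n$ to be the smallest integer with $n\ge(|u|_{\mathcal A^s}/(\delta-\varepsilon))^{1/s}$; combining the two estimates then gives $\sigma_n(\uepsilon)_{\Hunz}\le\varepsilon+(\delta-\varepsilon)=\delta$, which is the first assertion.

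Finally, to bound the cardinality I would use $\delta>2\varepsilon$, which forces $\delta-\varepsilon>\delta/2$, so that $n\le\lceil(2|u|_{\mathcal A^s}/\delta)^{1/s}\rceil\lesssim|u|_{\mathcal A^s}^{1/s}\delta^{-1/s}$, the implied constant depending only on $s$. The argument is essentially routine; the only point requiring a little care is the passage from $\delta-\varepsilon$ to $\delta$ in the denominator and the rounding in the choice of $n$, both of which are handled by the gap $\delta>2\varepsilon$ and by absorbing the additive constant from the ceiling into $\lesssim$ (note that when $|u|_{\mathcal A^s}^{1/s}\delta^{-1/s}<1$ a single-cell subdivision already suffices). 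There is no genuine obstacle here: the entire content is that the energy-best approximation is stable under an $\varepsilon$-perturbation of the target, so $\uepsilon$ inherits the approximation rate of $u$ down to the resolution level $\varepsilon$.
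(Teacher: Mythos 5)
Your proof is correct and is essentially the same argument as the one behind Lemma~3.2 of \cite{bonito2013adaptive}, which the paper cites rather than reproduces: the perturbation bound $\sigma_n(\uepsilon)_{\Hunz}\le \vertiii{u-\uepsilon}+\sigma_n(u)_{\Hunz}$ (valid because the Galerkin projection is the energy-norm best approximation), followed by the definition of $|u|_{\mathcal A^s}$ and the gap $\delta>2\varepsilon$ to absorb the $\varepsilon$-perturbation and control the cardinality. Nothing to correct.
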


\begin{lemma}[a priori asymptotic decay of the total error, see Lemma~5.1 of
        \cite{cohen2012convergence}]\label{l:a-priori-deday} Under the settings in
    Lemma~\ref{l:greedy}, we set $r=r(\tau)$ according to
    \eqref{i:choice-regularization} so that $\vertiii{u-u^r}\le \constRel\tau/2$
    for some $\tau>0$.  Then for any $1 > \delta\ge \sqrt2\constRel\tau$, there
    is a refinement $\mathcal T$ of $\widetilde{\mathcal
            T}=\emph{\INTERFACE}(\mathcal T_0, r)$ such that
    \[
        E(\uepsilon,\mathcal T) \le \delta \quad\text{and}\quad
        \#(\mathcal T) - \#(\widetilde{\mathcal T}) \lesssim  (K_0 r^{1-d/2} + |u|_{\mathcal A^s}^{1/s}) \delta^{-1/s}.
    \]
\end{lemma}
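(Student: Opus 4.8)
The plan is to bound the two contributions to the total error
\[
    E(\uepsilon,\mathcal T)^2 = \vertiii{\uepsilon-U_{\mathcal T}}^2 + \mathcal D(F^r,\mathcal T)^2
\]
separately, splitting the budget evenly so that each term is at most $\delta/\sqrt2$, and then to realize both bounds \emph{simultaneously} on a single mesh obtained by overlaying. The factor $\sqrt2$ in the hypothesis $\delta\ge\sqrt2\constRel\tau$ is precisely what is needed to feed the tolerance $\delta/\sqrt2$ into the energy estimate.

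For the energy part I would invoke Lemma~\ref{l:epsilon-approximation}. Since $r=r(\tau)$ is chosen via \eqref{i:choice-regularization}, Proposition~\ref{p:reg-error-sol} gives $\vertiii{u-\uepsilon}\le\constRel\tau/2=:\varepsilon$, so $\uepsilon$ is a $2\varepsilon$-approximation to $u$ of order $s$. The hypothesis $\delta\ge\sqrt2\constRel\tau$ guarantees $\delta/\sqrt2\ge\constRel\tau=2\varepsilon$, which is the range in which Lemma~\ref{l:epsilon-approximation} applies; it yields an integer $n_1$ and a subdivision $\mathcal T_1\in\mathscr T_{n_1}$ (a refinement of $\mathcal T_0$) with $\vertiii{\uepsilon-U_{\mathcal T_1}}\le\sigma_{n_1}(\uepsilon)_{\Hunz}\le\delta/\sqrt2$ and $n_1\lesssim|u|_{\mathcal A^s}^{1/s}\delta^{-1/s}$. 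For the data part I would run $\mathcal T_2=\GREEDY(\widetilde{\mathcal T},F^r,\delta/\sqrt2)$; upon termination $\mathcal D(F^r,\mathcal T_2)\le\delta/\sqrt2$, while Lemma~\ref{l:greedy} bounds the cost by $\#(\mathcal T_2)-\#(\widetilde{\mathcal T})\lesssim K_0 r^{1-d/2}\delta^{-d}$.

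Next, set $\mathcal T$ to be the overlay of $\mathcal T_1$ and $\mathcal T_2$. Both refine $\mathcal T_0$, so the overlay is well defined and refines each of them; since $\mathcal T_2$ refines $\widetilde{\mathcal T}$, so does $\mathcal T$, as required. Because $\mathbb V(\mathcal T_1)\subset\mathbb V(\mathcal T)$ and the Galerkin solution is the energy-best approximation (the form $A(\cdot,\cdot)$ being symmetric), $\vertiii{\uepsilon-U_{\mathcal T}}\le\vertiii{\uepsilon-U_{\mathcal T_1}}\le\delta/\sqrt2$; and since $d(g,T,\mathcal T)=h_T\|g\|_{L^2(T)}$ is monotone decreasing under refinement and $\mathcal T$ refines $\mathcal T_2$, we get $\mathcal D(F^r,\mathcal T)\le\mathcal D(F^r,\mathcal T_2)\le\delta/\sqrt2$. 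Hence $E(\uepsilon,\mathcal T)^2\le\delta^2/2+\delta^2/2=\delta^2$. For the cardinality, the overlay bound \eqref{i:overlay} gives
\[
    \#(\mathcal T)-\#(\widetilde{\mathcal T})
    \le \big(\#(\mathcal T_1)-\#(\mathcal T_0)\big)+\big(\#(\mathcal T_2)-\#(\widetilde{\mathcal T})\big)
    \lesssim |u|_{\mathcal A^s}^{1/s}\delta^{-1/s}+K_0 r^{1-d/2}\delta^{-d}.
\]
Since $s\le\tfrac1d$ and $\delta<1$, we have $\delta^{-d}\le\delta^{-1/s}$, and absorbing the data term yields the claimed estimate.

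The only genuinely delicate points are the following. First, one must keep $\mathcal T$ a refinement of $\widetilde{\mathcal T}$ while importing the energy-optimal mesh $\mathcal T_1$; the overlay construction resolves this precisely because both meshes descend from $\mathcal T_0$. Second, one must reconcile the two different rates, since the energy estimate naturally produces $\delta^{-1/s}$ with $s<\tfrac1d$ whereas the data estimate produces $\delta^{-d}$; this is handled by the elementary observation that $\delta^{-d}\le\delta^{-1/s}$ for $\delta<1$. I expect the main conceptual obstacle to be the $r$-dependence $r^{1-d/2}$, which is entirely inherited from Lemma~\ref{l:greedy} and is ultimately the source of the suboptimal rate in three dimensions.
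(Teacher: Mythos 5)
Your argument is correct and is essentially the paper's own proof written out in full: the paper's one-line proof takes $\mathcal T$ to be the overlay of $\DATA(\widetilde{\mathcal T},F^r,\delta/\sqrt2)$ (for which Corollary~\ref{c:data} gives the same $K_0 r^{1-d/2}$ cost bound as your $\GREEDY$ call via Lemma~\ref{l:greedy}) with the mesh from Lemma~\ref{l:epsilon-approximation} at tolerance $\delta/\sqrt2$. The details you supply --- monotonicity of both error components under refinement, the overlay cardinality bound, and the absorption $\delta^{-d}\le\delta^{-1/s}$ for $\delta<1$ --- are exactly the steps the paper leaves implicit.
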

\begin{proof}
    \reviewblue{A desired refinement $\mathcal T$ of $\widetilde{\mathcal T}$ is the overlay of $\mathcal T_f^r=\DATA(\widetilde{\mathcal T},F^r, \delta/\sqrt2)$ and $\mathcal T_\uepsilon$ from Lemma~\ref{l:epsilon-approximation} by replacing $\delta$ with $\frac{\delta}{\sqrt{2}}$.}
\end{proof}

\reviewblue{The next lemma provides the estimate of marked cells in $\SOLVE$. The proof follows from \cite[Lemma~5.3]{cohen2012convergence}, together with Lemma~\ref{l:a-priori-deday}, as well as the minimal assumption of $\texttt{MARK}$.}
\begin{lemma}[cardinality of $\REGSOLVE\textrm{::}\SOLVE\textrm{::}\MARK$]\label{l:marked-cells-refine}
    Under the settings given by Lemma~\ref{l:greedy}, let the bulk parameter $\theta$ defined in $\emph{\SOLVE}$ satisfy the condition $\theta < \theta_*$, with $\theta_*$ provided by \eqref{e:theta-condition}. For a fixed $\tau>0$, set $r=r(\tau)$ in \eqref{i:choice-regularization} and $\widetilde{\mathcal T}=\emph{\INTERFACE}(\mathcal T_0, r)$. We also let $\{\mathcal T_k\}$ be defined in $\emph{\SOLVE}(\widetilde{\mathcal T}, F^r, \widetilde \mu\tau)$ with $\widetilde \mu \ge \sqrt2\constRel/(\xi\constEff)$ and $\{\mathcal M_k\}$ be the set of marked cells generated from $\emph{\SOLVE\textrm{::}\MARK}$ at $\mathcal T_k$. Then there holds
    \[
        \#(\mathcal M_k) \lesssim (K_0 r^{1-d/2}+ U_s) E(\uepsilon, \mathcal T_k)^{-1/s} ,
    \]
    where $U_s := |u|_{\mathcal A^s}^{1/s}$.
\end{lemma}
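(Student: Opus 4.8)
The plan is to mirror the proof of \cite[Lemma~5.3]{cohen2012convergence}, replacing the generic approximation-class estimate by the $r$-explicit refinement count from Lemma~\ref{l:a-priori-deday}, and then to exploit the minimality of the D\"orfler set produced by $\MARK$. Fix an iterate $k$ for which the $\SOLVE$ loop has not yet terminated, so that $\mathcal E_k > \widetilde\mu\tau$. The first step is to certify that a meaningful error reduction is available at $\mathcal T_k$. I set the target $\delta := \xi E(\uepsilon,\mathcal T_k)$ with $\xi$ from \eqref{e:xi}. Using the efficiency bound $\mathcal E_k \le \constEff E(\uepsilon,\mathcal T_k)$ from \eqref{i:upper-lower-bound} together with the loop condition $\mathcal E_k > \widetilde\mu\tau$, one gets $E(\uepsilon,\mathcal T_k) > \widetilde\mu\tau/\constEff$; the lower bound imposed on $\widetilde\mu$ is precisely what forces $\delta \ge \sqrt2\,\constRel\tau$, the admissibility threshold required to invoke Lemma~\ref{l:a-priori-deday} (recall that $r=r(\tau)$ is chosen via \eqref{i:choice-regularization} so that $\vertiii{u-\uepsilon}\le\constRel\tau/2$).

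Next I would produce a refinement of $\mathcal T_k$ realizing this reduction with controlled cost. Lemma~\ref{l:a-priori-deday} yields a refinement $\mathcal T_\delta$ of $\widetilde{\mathcal T}$ with $E(\uepsilon,\mathcal T_\delta)\le\delta$ and $\#(\mathcal T_\delta)-\#(\widetilde{\mathcal T}) \lesssim (K_0 r^{1-d/2}+U_s)\delta^{-1/s}$. Since $\mathcal T_\delta$ need not refine the running grid, I take $\mathcal T^*$ to be the overlay of $\mathcal T_k$ and $\mathcal T_\delta$, both of which are refinements of the common parent $\widetilde{\mathcal T}$. By monotonicity of the total error under refinement one has $E(\uepsilon,\mathcal T^*)\le E(\uepsilon,\mathcal T_\delta)\le \delta = \xi E(\uepsilon,\mathcal T_k)$; by the overlay bound \eqref{i:overlay} with common coarse grid $\widetilde{\mathcal T}$ one has $\#(\mathcal T^*)-\#(\mathcal T_k) \le \#(\mathcal T_\delta)-\#(\widetilde{\mathcal T}) \lesssim (K_0 r^{1-d/2}+U_s)\delta^{-1/s}$. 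Absorbing the factor $\xi^{-1/s}$ into the hidden constant rewrites the right-hand side as $(K_0 r^{1-d/2}+U_s)\,E(\uepsilon,\mathcal T_k)^{-1/s}$.

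The final step converts this a priori refinement into a bound on the marked set. Since $\theta<\theta_*$ and $E(\uepsilon,\mathcal T^*)\le \xi E(\uepsilon,\mathcal T_k)$, the bulk property Lemma~\ref{l:bulk-property} (applied with data $F^r$ and Galerkin solution $W_k$) guarantees that the set $\mathcal R$ of cells refined in passing from $\mathcal T_k$ to $\mathcal T^*$ already satisfies the D\"orfler criterion $\mathcal E(W_k,\mathcal R)\ge\theta\,\mathcal E_k$. Because $\MARK$ returns the smallest subset of $\mathcal T_k$ meeting exactly this criterion for the indicator $e_k$, minimality gives $\#(\mathcal M_k)\le\#(\mathcal R)\le \#(\mathcal T^*)-\#(\mathcal T_k)$, and chaining this with the estimate of the previous paragraph yields the claimed bound $\#(\mathcal M_k) \lesssim (K_0 r^{1-d/2}+U_s)\,E(\uepsilon,\mathcal T_k)^{-1/s}$.

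I expect the main delicacy to be the mismatch of reference grids: Lemma~\ref{l:a-priori-deday} delivers a refinement of $\widetilde{\mathcal T}$ rather than of the running grid $\mathcal T_k$, so the error-reducing grid must be patched into the current iterate. The overlay construction repairs this, and its cost is kept under control by \eqref{i:overlay} precisely because both grids descend from the common parent $\widetilde{\mathcal T}=\INTERFACE(\mathcal T_0,r)$; it is here that the \emph{a priori} interface refinement carried out before $\SOLVE$ is essential. The only other point needing care is the verification $\delta\ge\sqrt2\,\constRel\tau$, which is what ties the admissible range of $\widetilde\mu$ to the $\SOLVE$ loop threshold and, together with $\theta<\theta_*$, makes both Lemma~\ref{l:a-priori-deday} and Lemma~\ref{l:bulk-property} simultaneously applicable.
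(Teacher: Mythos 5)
Your proof has the right structure and is essentially the proof the paper intends: the paper omits all details, saying only that the result ``follows from \cite[Lemma~5.3]{cohen2012convergence}, together with Lemma~\ref{l:a-priori-deday}, as well as the minimal assumption of $\MARK$'', and your chain --- set $\delta=\xi E(\uepsilon,\mathcal T_k)$, produce a grid via Lemma~\ref{l:a-priori-deday}, overlay it with $\mathcal T_k$ over the common ancestor $\widetilde{\mathcal T}=\INTERFACE(\mathcal T_0,r)$, invoke the bulk property of Lemma~\ref{l:bulk-property}, and conclude by minimality of the D\"orfler set, absorbing $\xi^{-1/s}$ into the hidden constant --- is exactly that argument, with the role of the \emph{a priori} $\INTERFACE$ refinement correctly identified as what makes the overlay bound \eqref{i:overlay} usable.

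One step does not close as you wrote it: the admissibility check $\delta\ge\sqrt2\constRel\tau$ for Lemma~\ref{l:a-priori-deday}. From the loop condition $\mathcal E_k>\widetilde\mu\tau$ and the efficiency bound $\mathcal E_k\le\constEff E(\uepsilon,\mathcal T_k)$ you get $\delta=\xi E(\uepsilon,\mathcal T_k)>\xi\widetilde\mu\tau/\constEff$, and inserting the stated hypothesis $\widetilde\mu\ge\sqrt2\constRel/(\xi\constEff)$ yields only $\delta>\sqrt2\constRel\tau/\constEff^{2}$. Since necessarily $\constEff\ge 1$ (the data indicator $\mathcal D$ can be of order one while $\vertiii{w_g-W_g}\lesssim\|g\|_{H^{-1}(\Omega)}$ is arbitrarily small, e.g.\ for highly oscillatory $g$ with cell-wise zero mean, so no efficiency constant below one is admissible), this is strictly weaker than what Lemma~\ref{l:a-priori-deday} requires; your claim that the stated lower bound on $\widetilde\mu$ is ``precisely what forces'' admissibility is therefore not literally true. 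The chain closes only under $\widetilde\mu\ge\sqrt2\constRel\constEff/\xi$, i.e.\ with $\constEff$ in the numerator. This factor-of-$\constEff^{2}$ mismatch appears to originate in the lemma's statement itself rather than in your strategy, but a careful write-up should track the constants and flag the corrected threshold instead of asserting that the printed one suffices. A second, more minor omission: Lemma~\ref{l:a-priori-deday} also requires $\delta<1$, which you (like the paper) leave unaddressed; for iterates with $\xi E(\uepsilon,\mathcal T_k)\ge1$ the lemma cannot be applied with your choice of $\delta$, so those early iterations need a separate word.
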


\begin{lemma}[\reviewblue{performance} of $\REGSOLVE\textrm{::}\SOLVE$ (cf. Theorem~4.1 of \cite{bonito2013adaptive})]\label{l:marked-cell-solve}
    Denote $\{(\mathcal T_j, U_j)\}_{j=0}^{j_{\max}}$ to be the sequence of subdivisions and approximations of $u$ generated by $\emph{\REGSOLVE}$, respectively. Set $\widetilde{\mathcal T}_j = \emph{\INTERFACE}(\mathcal T_j, r_j)$ with $r_j=r(\tau_j)$. Under the assumptions provided by Lemma~\ref{l:greedy} and Lemma~\ref{l:marked-cells-refine}, there holds that for $j\ge 1$,
    \[
        \#(\mathcal T_{j+1}) - \#(\widetilde{\mathcal T}_j) \lesssim (K_0 r^{1-d/2}+ U_s) \tau_j^{-1/s}.
    \]
\end{lemma}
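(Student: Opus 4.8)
The plan is to bound the total number of cells produced by the inner call $\SOLVE(\widetilde{\mathcal T}_j, F^{r_j}, \widetilde\mu\tau_j,\dots)$ by summing the per-iteration refinement counts along its while loop, and then to collapse that sum (up to a fixed constant) onto the contribution of the final iterate, exploiting the geometric decay of the error indicator furnished by the contraction property. Throughout I set $r_j = r(\tau_j)$ and write $u^{r_j}$ for the regularized solution solved for inside this $\SOLVE$ call.

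First I would label the subdivisions generated internally as $\mathcal S_0 = \widetilde{\mathcal T}_j, \mathcal S_1,\dots,\mathcal S_K = \mathcal T_{j+1}$, with $W_k\in\mathbb V(\mathcal S_k)$ the associated Galerkin approximations of $u^{r_j}$, so that
\[
    \#(\mathcal T_{j+1}) - \#(\widetilde{\mathcal T}_j) = \sum_{k=0}^{K-1}\big(\#(\mathcal S_{k+1})-\#(\mathcal S_k)\big).
\]
At each step $k$ exactly one branch of $\SOLVE$ is taken. On a $\DATA$ step (when $\mathcal D_k > \sigma_k := \lambda\theta\mathcal E_k$), Corollary~\ref{c:data} applies because $\mathcal S_k$ refines $\widetilde{\mathcal T}_j = \INTERFACE(\mathcal T_j,r_j)$, giving $\#(\mathcal S_{k+1})-\#(\mathcal S_k) \lesssim K_0 r_j^{1-d/2}(\sigma_k/2)^{-d} \sim K_0 r_j^{1-d/2}\mathcal E_k^{-d}$. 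On a $\MARK$ step the single $\REFINE$ together with the complexity bound \eqref{i:complexity-refine} gives $\#(\mathcal S_{k+1})-\#(\mathcal S_k) \lesssim \#(\mathcal M_k)$, and Lemma~\ref{l:marked-cells-refine} (whose hypotheses, including $\widetilde\mu\ge\sqrt2\constRel/(\xi\constEff)$, are in force) bounds $\#(\mathcal M_k) \lesssim (K_0 r_j^{1-d/2}+U_s)E(u^{r_j},\mathcal S_k)^{-1/s}$.

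Next I would establish geometric decay. By Theorem~\ref{t:solve-contraction} the quantity $\vertiii{u^{r_j}-W_k}^2 + \widetilde\alpha\,\mathcal E(W_k,\mathcal S_k)^2$ contracts by $\alpha^2<1$ each step, so $\mathcal E_k:=\mathcal E(W_k,\mathcal S_k)$ decays geometrically in $k$; combined with the two-sided equivalence $E_k := E(u^{r_j},\mathcal S_k) \sim \mathcal E_k$ (from the bounds \eqref{i:upper-lower-bound} together with $\mathcal D_k\le\mathcal E_k$, with $r$-independent constants) and $\sigma_k\sim\mathcal E_k$, this yields $E_{k'}\le C\alpha^{\,k'-k}E_k$ for all $k'\ge k$. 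Taking $k'=K-1$ gives the reversed bound $E_k \ge C^{-1}\alpha^{-(K-1-k)}E_{K-1}$, hence $E_k^{-1/s}\lesssim \alpha^{(K-1-k)/s}E_{K-1}^{-1/s}$ and $E_k^{-d}\lesssim \alpha^{(K-1-k)d}E_{K-1}^{-d}$. Summing these convergent geometric series over $k=0,\dots,K-1$ gives $\sum_k E_k^{-1/s}\lesssim E_{K-1}^{-1/s}$ and $\sum_k E_k^{-d}\lesssim E_{K-1}^{-d}$. Since $\SOLVE$ has not terminated before step $K$ we have $\mathcal E_{K-1}>\widetilde\mu\tau_j$, so $E_{K-1}\gtrsim\mathcal E_{K-1}\gtrsim\tau_j$, giving $E_{K-1}^{-1/s}\lesssim\tau_j^{-1/s}$ and $E_{K-1}^{-d}\lesssim\tau_j^{-d}$. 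Collecting the two branches,
\[
    \#(\mathcal T_{j+1})-\#(\widetilde{\mathcal T}_j) \lesssim K_0 r_j^{1-d/2}\tau_j^{-d} + (K_0 r_j^{1-d/2}+U_s)\tau_j^{-1/s},
\]
and because $s\le\tfrac1d$ and $\tau_j<1$ force $\tau_j^{-d}\le\tau_j^{-1/s}$, the $\DATA$ contribution is absorbed and the claimed bound follows.

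The step I expect to be the main obstacle is converting the contraction of the combined quantity in Theorem~\ref{t:solve-contraction} into a clean, step-to-step two-sided comparison of $E_k$, and in particular justifying the reversed inequality $E_k\gtrsim\alpha^{-(K-1-k)}E_{K-1}$ that makes the series summable from above; this hinges on the equivalence $E_k\sim\mathcal E_k$ holding uniformly in $k$ with constants independent of $r_j$, which is exactly what keeps the final estimate's $r_j$-dependence confined to the factor $K_0 r_j^{1-d/2}$. The restriction $j\ge1$ plays only a technical role, ensuring $r_j=r(\tau_j)$ is small enough (e.g.\ $r_j<\constBoundary$ and $r_j<2\diam(\mathcal G(\gamma,\mathcal T_0))$) for Lemma~\ref{l:greedy}, Corollary~\ref{c:data} and Lemma~\ref{l:marked-cells-refine} to be applicable along the entire inner loop.
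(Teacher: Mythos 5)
Your proposal is correct and its core is the same as the paper's: decompose $\#(\mathcal T_{j+1})-\#(\widetilde{\mathcal T}_j)$ into per-iteration contributions of the inner $\SOLVE$ loop, bound the $\MARK$/$\REFINE$ branch by Lemma~\ref{l:marked-cells-refine} and the $\DATA$ branch by Corollary~\ref{c:data}, and collapse the sum onto the last iterate via the contraction of Theorem~\ref{t:solve-contraction} together with the $r$-independent equivalence $E_k\sim\mathcal E_k$ from \eqref{i:upper-lower-bound} and $\mathcal D_k\le\mathcal E_k$; your identification of the reversed inequality $E_k\gtrsim\alpha^{-(K-1-k)}E_{K-1}$ as the crux, and your absorption of the $\tau_j^{-d}$ data contribution into $\tau_j^{-1/s}$, match what the paper does (the paper is in fact slightly more cavalier on the latter point, writing $(\lambda\theta\mathcal E_k)^{-1/s}$ directly). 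The one substantive difference is that the paper devotes the first half of its proof to showing that the number of inner iterations $k_{\max}$ is uniformly bounded in $j$, by estimating the initial indicator $\widehat\tau_j\lesssim\tau_j$ through the quasi-monotonicity of the data indicator (Lemma~\ref{l:quasi-monotonicity}) combined with Propositions~\ref{p:reg-error-sol} and~\ref{p:error-control}; this is where the hypothesis $j\ge1$ is genuinely used (one compares $\mathcal D(F^{r_j},\widetilde{\mathcal T}_j)$ with $\mathcal D(F^{r_{j-1}},\widetilde{\mathcal T}_j)$ and invokes $\vertiii{u-U_j}\lesssim\tau_{j-1}$), not merely to make $r_j$ small as you suggest. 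For the cardinality bound alone your geometric-series argument does not need $k_{\max}$ bounded, so your proof is complete for the stated claim; but the uniform bound on $k_{\max}$ is part of what the paper means by ``performance'' (it controls the number of Galerkin solves), so be aware that your shorter route proves a slightly weaker fact than the paper's proof actually delivers.
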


\begin{proof}
    For each $j\ge 1$, we let $k_{\max}$ be the number of iterations executed in $\SOLVE$. Let us first show that $k_{\max}$ is uniform bound with respect to $j$. Let $\widehat \tau_j$ be the error indicator for $\widetilde U_j = \GAL(\widetilde{\mathcal T}_j, F^{r_j})$ with $r_j=r(\tau_j)$ in $\REGSOLVE$. In view of \eqref{i:upper-lower-bound} and Lemma~\ref{l:quasi-monotonicity}, we have
    \[
        \begin{aligned}
            \widehat{\tau}_j \lesssim E(u^{r_j}, \widetilde{\mathcal T}_j)
             & \lesssim \vertiii{u^{r_j}-\widetilde U_j} + \mathcal D(F^{r_j}, \widetilde{\mathcal T}_j )
            \lesssim \vertiii{u^{r_j}- U_j} + \mathcal D(F^{r_{j-1}}, \widetilde {\mathcal T_j}) + r_j         \\
             & \lesssim \vertiii{u- u^{r_j}} + \vertiii{u- U_j} + \mathcal E(u^{r_{j-1}}, \mathcal T_j) + r_j.
        \end{aligned}
    \]
    Now we invoke Proposition~\ref{p:error-control} and \ref{p:reg-error-sol} to deduce
    \begin{equation}\label{i:error-error}
        \widehat{\tau}_j \lesssim r_j^{1/2} + \tau_{j-1} + r_j
        \lesssim \tau_{j} + \tau_{j-1} \lesssim \tau_{j} .
    \end{equation}
    In the above estimates we also used the relations $r_j\lesssim \tau_{j+1}^2$ and $\tau_{j} = \beta\tau_{j-1}$. The contraction property \eqref{t:solve-contraction} together with \eqref{i:error-error} yields the uniform boundedness of $k_{\max}$.

    \reviewblue{At each iteration $k=0,1,\ldots,k_{\max}$ in $\SOLVE$, Lemma~\ref{l:marked-cells-refine} controls the number of marked cells in $\REFINE$. For the cardinality of the marked cells in $\DATA$, we set $\mathcal T^+_k$ to be the corresponding output and apply Corollary~\ref{c:data} to get,
    \[
        \#(\mathcal T_k^+) - \#(\mathcal T_k) \lesssim K_0 r^{1-d/2}(\lambda\theta\mathcal E_k)^{-1/s}
        \lesssim K_0 r^{1-d/2}E(\uepsilon,\mathcal T_k)^{-1/s} .
    \]
    Combining the above estimate together with Lemma~\ref{l:marked-cells-refine}, we obtain that
    \begin{equation}\label{i:bound-rate}
        \begin{aligned}
            \#(\mathcal T_{j+1}) - \#(\widetilde{\mathcal T_j}) & \lesssim
            \sum_{k=0}^{k_{\max}} \big(\#(\mathcal M_k) + \#(\mathcal T_k^+) - \#(\mathcal T_k)\big)          \\
                                                                & \lesssim (K_0 r^{1-d/2} + U_s)
            E(\uepsilon,\mathcal T_{k_{\max}})^{-1/s} \sum_{k=0}^{k_{\max}} \alpha^{(k_{\max}-k)/s}           \\
                                                                & \lesssim (K_0 r^{1-d/2}+ U_s)  \tau^{-1/s},
        \end{aligned}
    \end{equation}
    where for the last two inequalities above we applied Theorem~\ref{t:solve-contraction}, $\tau \lesssim E(\uepsilon,\mathcal T_{k_{\max}})$, and $\sum_{k=0}^{k_{\max}} \alpha^{(k_{\max}-k)/s} \le \sum_{k=0}^\infty \alpha^{k/s}\lesssim 1$. The proof is complete.}
\end{proof}

\subsubsection{Performace of \texorpdfstring{$\REGSOLVE$}{REGSOLVE}}
We are now in a position to show our main result.

\begin{theorem}[\reviewblue{performance} of $\REGSOLVE$]\label{t:convergence-rate}
    Denote $\{(\mathcal T_j, U_j)\}_{j=0}^{j_{\max}}$ to be the sequence of subdivisions and approximations of $u$ generated by $\emph{\REGSOLVE}$, respectively. Under the assumptions provided by Lemma~\ref{l:greedy} and Lemma~\ref{l:marked-cells-refine}, there holds that
    \[
        \#(\mathcal T_{j_{\max}}) - \#(\mathcal T_0) \lesssim (K_0+I_0+U_s) \tau_{j_{\max}}^{2-d-1/s} .
    \]
\end{theorem}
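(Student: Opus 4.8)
The plan is to accumulate the per-iteration cost bounds from the two subroutines $\INTERFACE$ and $\SOLVE$ across all $j$ from $0$ to $j_{\max}$, and then sum the resulting geometric-type series in the tolerances $\tau_j$. The key observation is that $\REGSOLVE$ produces a telescoping-like structure: the total growth in cardinality from $\mathcal T_0$ to $\mathcal T_{j_{\max}}$ can be decomposed into the refinements introduced by $\INTERFACE$ (controlled by Corollary~\ref{c:interface}) and those introduced by $\SOLVE$ (controlled by Lemma~\ref{l:marked-cell-solve}), and that both bounds involve powers of $\tau_j$ with $r_j\sim\tau_j^2$.

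**First I would** write the telescoping decomposition. For each $j\ge 0$, the passage from $\mathcal T_j$ to $\mathcal T_{j+1}$ factors through $\widetilde{\mathcal T}_j=\INTERFACE(\mathcal T_j, r_j)$, so that
\[
    \#(\mathcal T_{j_{\max}}) - \#(\mathcal T_0)
    = \sum_{j=0}^{j_{\max}-1} \Big( \#(\widetilde{\mathcal T}_j) - \#(\mathcal T_j) \Big)
    + \sum_{j=0}^{j_{\max}-1} \Big( \#(\mathcal T_{j+1}) - \#(\widetilde{\mathcal T}_j) \Big).
\]
To the first sum I would apply Corollary~\ref{c:interface}, giving a bound $\lesssim \sum_j I_0 \tau_j^{-2(d-1)}$. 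To the second sum I would apply Lemma~\ref{l:marked-cell-solve}, giving a bound $\lesssim \sum_j (K_0 r_j^{1-d/2} + U_s)\tau_j^{-1/s}$. Since $r_j\sim\tau_j^2$, we have $r_j^{1-d/2}\sim\tau_j^{2-d}$, so the second sum is $\lesssim \sum_j (K_0\tau_j^{2-d} + U_s)\tau_j^{-1/s} = \sum_j K_0\tau_j^{2-d-1/s} + U_s\tau_j^{-1/s}$.

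**Next I would** observe that all three exponents appearing, namely $-2(d-1)$, $2-d-1/s$, and $-1/s$, are dominated by $2-d-1/s$ in the regime of interest (recall $s\le\tfrac1d$, so $1/s\ge d$ and the $\tau_j^{2-d-1/s}$ term has the most negative exponent, hence grows fastest as $\tau_j\to 0$). Because the tolerances decay geometrically, $\tau_j=\beta^j\tau_0$ with $\beta\in(0,1)$, each of these sums is a geometric series whose largest term is the final one at $j=j_{\max}-1$ (equivalently, up to a constant, the term at $\tau_{j_{\max}}$). Summing the dominant geometric series gives
\[
    \sum_{j=0}^{j_{\max}-1} \tau_j^{2-d-1/s} \lesssim \tau_{j_{\max}}^{2-d-1/s},
\]
with the hidden constant depending only on $\beta$, $d$, and $s$ through the ratio of the geometric series. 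Collecting the three contributions under the single dominant exponent yields the claimed bound $\lesssim (K_0+I_0+U_s)\tau_{j_{\max}}^{2-d-1/s}$.

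**The main obstacle** I expect is the careful verification that $2-d-1/s$ is indeed the governing exponent and that the geometric series genuinely sums to its last term rather than its first — this requires checking that $2-d-1/s < 0$ so that $\tau_j^{2-d-1/s}$ is increasing in $j$ (decreasing in $\tau_j$), which holds since $1/s\ge d > 2-d$ for the admissible range of $s$, and that the exponents $-2(d-1)$ and $-1/s$ are no more negative, so the $\INTERFACE$ cost and the $U_s$ term do not dominate. One must also confirm that the constants $I_0$, $K_0$, $U_s$ can be pulled out uniformly in $j$ — this relies on $K_0$ and $U_s$ being independent of $r$ and $\tau$ (guaranteed by Lemma~\ref{l:greedy} and the definition of $U_s$), and on the uniform boundedness of $k_{\max}$ already established in the proof of Lemma~\ref{l:marked-cell-solve}, so that the inner $\SOLVE$ loop contributes only a constant factor per outer iteration.
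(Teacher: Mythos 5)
Your proposal is correct and follows essentially the same route as the paper: decompose the cardinality growth at each outer iteration into the $\INTERFACE$ contribution (Corollary~\ref{c:interface}) and the $\SOLVE$ contribution (Lemma~\ref{l:marked-cell-solve}), substitute $r_j\sim\tau_j^2$, absorb everything under the dominant exponent $2-d-1/s$, and sum the resulting geometric series, which is controlled by its last term since $\tau_j=\beta^j\tau_0$. Your version is in fact slightly more explicit than the paper's about the exact telescoping of cardinalities and about verifying that $-2(d-1)$ and $-1/s$ are no more negative than $2-d-1/s$, but the substance is identical.
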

\begin{proof}
    Denote $\mathcal M_j$ the collections of cells marked for refinement in the $j$-th iteration of solve. Invoking Corollary~\ref{c:interface} and Lemma~\ref{l:marked-cell-solve}, we have
    \[
        \begin{aligned}
            \#(\mathcal M_j) & = (\#(\widetilde{\mathcal T}_j) - \#(\mathcal T_j)) + (\#(\mathcal T_{j+1}) - \#(\widetilde{\mathcal T}_j)) \\
                             & \lesssim (I_0 \tau_j^{2-d} + K_0 \tau_j^{2-d}  + U_s) \tau_j^{-1/s}
            \lesssim (I_0 + K_0  + U_s) \tau_j^{2-d-1/s},
        \end{aligned}
    \]
    where we used the setting $r \sim \tau_j^2$ according to \eqref{i:choice-regularization}. \reviewblue{Summing up the above estimate for $j=0,\ldots,j_{\max}-1$ together with the relation $\tau_{j_{\max}} = \beta^{j_{\max}-j} \tau_j$ implies the target estimate.}
\end{proof}

\begin{remark}[convergence rates]\label{r:convergence-rates}
    \reviewblue{Since the best possible rate is $s=\tfrac{1}{2(d-1)}$, Theorem~\ref{t:convergence-rate} implies that
        \[
            \vertiii{u-U_j} \lesssim\tau_j\lesssim (\#(\mathcal T_j)-\#(\mathcal T_0))^{-1/(3d-4)} .
        \]
        Hence, in two dimensional space, we guarantee that the adaptive method is quasi-optimal. However, in three dimensional space, we have,
        \[
            \vertiii{u-U_j} \lesssim (\#(\mathcal T_j)-\#(\mathcal T_0))^{-1/5},
        \]
        which turns out to be sub-optimal compared with the optimal rate $\tfrac14$.}
\end{remark}

\section{Numerical illustration}\label{s:numerics}
In this section, we test our numerical algorithm proposed in Section~\ref{s:algorithm} for the following interface problem: letting $\gamma$ be defined as in \eqref{i:constBoundary}, we want to find $u$ satisfying
\begin{equation}\label{e:test}
    \begin{aligned}
        -\Delta u = 0, \quad                   & \text{ in } \Omega\backslash\gamma, \\
        [u] \ = 0, \quad                       & \text{ on } \gamma,                 \\
        [\GRAD u \cdot \nu_\gamma ] = f, \quad & \text{ on } \gamma,                 \\
        u = g, \quad                           & \text{ on } \partial\Omega ,
    \end{aligned}
\end{equation}
where $[.]$ denotes the jump of the function across the interface $\gamma$ and $\nu_\gamma$ is the outward normal direction along $\gamma$. So $u$ satisfies the weak formulation \eqref{e:variational} with the forcing data $F$ defined by \eqref{e:dirac} and a non-homogeneous boundary condition.

\reviewblue{As we mentioned in Section~\ref{sss:refine}, our numerical
    implementation relies on the \texttt{deal.II} finite element library
    \cite{dealII93,dealiidesign} and we use quadrilateral subdivisions in two
    dimensions and hexahedral subdivisions in three dimensions. For the
    computation of the right hand side of the discrete system, we refer to
    Remark~22 of \cite{heltai2020priori} for more details. In the following
    numerical simulations, we use a radially symmetric $C^1$ approximation of
    the Dirac delta approximation, \ie $\psi_\rho(x) = c_d(1+\cos(|\pi
        x|))\chi(x)$, where $\chi(x)$ is the characteristic function on the unit
    ball and $c_d$ is a normalization constant so that $\int_{\Rd}\psi_\rho =
        1$.}

In $\REGSOLVE$, we fix $\widetilde \mu =\tfrac12$. The parameters $\mathcal T_0$
(initial subdivision), $\tau_0$ (initial tolerance), $\beta$ (tolerance
reduction), $j_{\max}$ (number of iterations), the bulk parameters $\theta$ and $\widetilde\theta$, and $\lambda$ (ratio between $\mathcal E$ and $\theta \mathcal D$) will be
provided for each numerical test. For the regularization parameter, we simply
set $r(\tau_j)=\tau_j^2$ in $\REGSOLVE\textrm{::}\INTERFACE$ to avoid the
estimate of the constants $\constReg$, $\constRel$ and $\|f\|_{L^2(\gamma)}$ in
\eqref{i:choice-regularization}. Furthermore, after the last iteration of
$\REGSOLVE$, we perform the following extra steps
\begin{algorithmic}
    \STATE $r_{j+1} = r(\tau_{j_{\max}+1})$;
    \STATE $\widetilde {\mathcal T}_{j_{\max}+1} = \INTERFACE(\mathcal T_{j_{\max}+1}, r_{j+1})$;
    \STATE $\GAL(\widetilde{\mathcal T}_{j_{\max}+1}, F^{r_{j+1}})$;
\end{algorithmic}

\subsection{Convergence tests on a L-shaped domain}
Following similar test cases to those presented in~\cite{HeltaiRotundo-2019-a}, we set $\Omega=(-1,1)^2\backslash [0,1]^2$, $\gamma=\partial B_{R}(c)$ with $R=0.2$ and $c=(0.5,-0.5)^{\mathtt T}$, $f=\tfrac{1}{R}$ and $g=\ln(|x-c|)$. The analytic solution is given by
\[
    u(x)=r(x)^{2/3}\sin(\tfrac23(\theta(x)-\tfrac\pi2))+
    \left\{
    \begin{aligned}
        -\ln(|x-c|), & \quad \text{if } |x-c| >R,    \\
        -\ln(R),     & \quad \text{if } |x-c|\le R ,
    \end{aligned}
    \right .
\]
with $(r,\theta)$ denoting the polar coordinates. We start with an initial uniform grid $\mathcal T_0$ with the mesh size
$\sqrt{2}/4$. Note that we also approximate the interface $\gamma$ with a
uniform subdivision whose vertices lie on $\gamma$. The corresponding mesh size
is fixed as $2\pi R/2^{14}$ so that the geometric error will not dominate the
total error. For the parameters showing the numerical algorithm, we set
$j_{\max}=6$, $\tau_0=0.6$, $\beta=0.8$, $\lambda=\tfrac13$ and
$\theta=\widetilde\theta = 0.7$ in $\SOLVE$ and $\DATA$, respectively. The left
plot in Figure~\ref{f:lshaped-error} reports the $H^1(\Omega)$-error versus the
number of degrees of freedom (\#DoFs) when $\GAL$ is executed. We note that the
error goes down almost vertically when we update the regularization radius after
$\INTERFACE$. In order to verify Theorem~\ref{t:convergence-rate} (or
Remark~\ref{r:convergence-rates}), we extract the sampling points only for $U_j$
(\ie the last Galerkin approximation in each iteration of $\REGSOLVE$) in red. Based on the observation we confirm the first order rate of convergence. We also present our
approximated solution $U_3$ and its underlying subdivision in Figure~\ref{f:lshaped-subdivision-and-solution}.

We test the algorithm in Remark~\ref{r:another-regsolve} (i.e., we make one
single iteration, and set the initial target tolerance to
$\tau_0\beta^{j_{\max}}$), and report the energy error for the final
approximation against \#DoFs in the right plot of Figure~\ref{f:lshaped-error}. Here we
use the same parameters except that $j_{\max}=14$, in order to reach a similar
true error. Comparing with the left plot of Figure~\ref{f:lshaped-error}, we
note that although both algorithms guarantee the quasi-optimal convergence rate,
the energy error $\vertiii{U_{j_{\max}}-u}$ using the algorithm in
Remark~\ref{r:another-regsolve} is much larger than that computed from
$\REGSOLVE$ with multiple iterations.

\begin{figure}[hbt!]
    \begin{center}
        \begin{tabular}{cc}
            \includegraphics[scale=0.5]{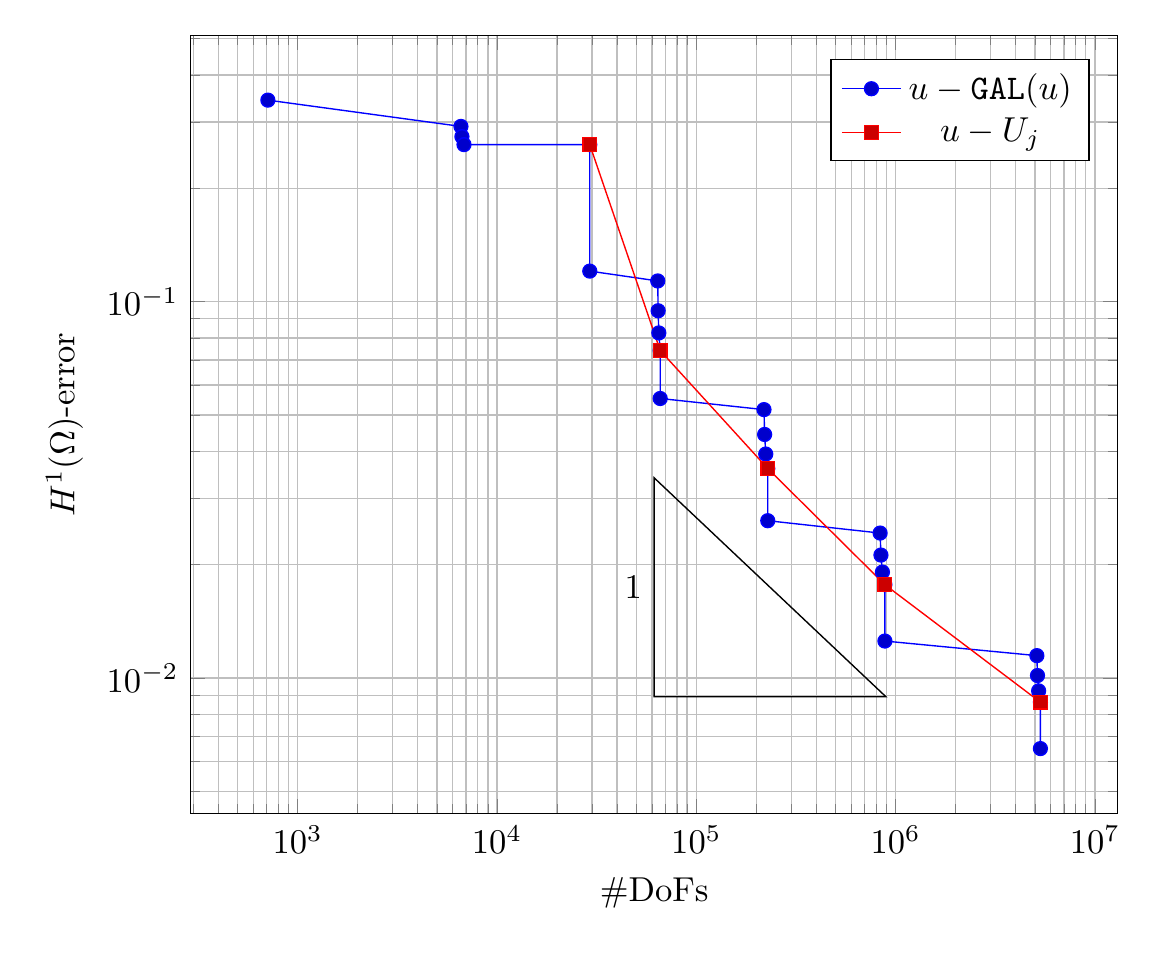}
             & \includegraphics[scale=0.5]{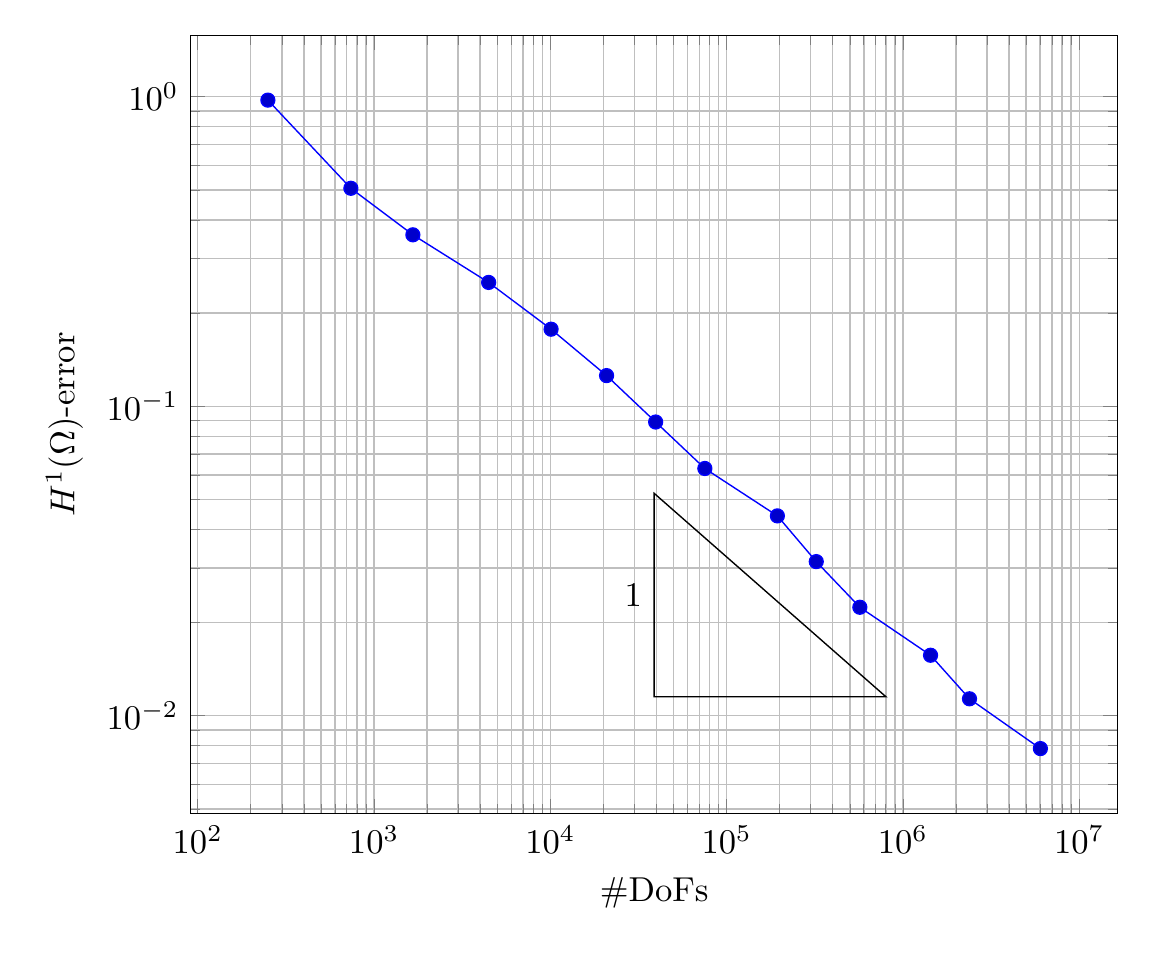} \\
        \end{tabular}
    \end{center}
    \caption{Test on a L-shaped domain: (left) $H^1(\Omega)$-error decay between the
        solution $u$ and every Galerkin approximation (\emph{$\GAL(u)$}) in
        $\emph{\REGSOLVE}$  and between $u$ and $U_j$ defined in $\emph{\REGSOLVE}$,
        (right) $H^1(\Omega)$-error decay between $u$ and $U_{j_{\max}}$ defined
        from Remark~\ref{r:another-regsolve}.  We set
        $j_{\max}=6$, and $\tau_0 = .6$. }
    \label{f:lshaped-error}
\end{figure}

\begin{figure}[hbt!]
    \begin{center}
        \begin{tabular}{cc}
            \includegraphics[scale=0.08]{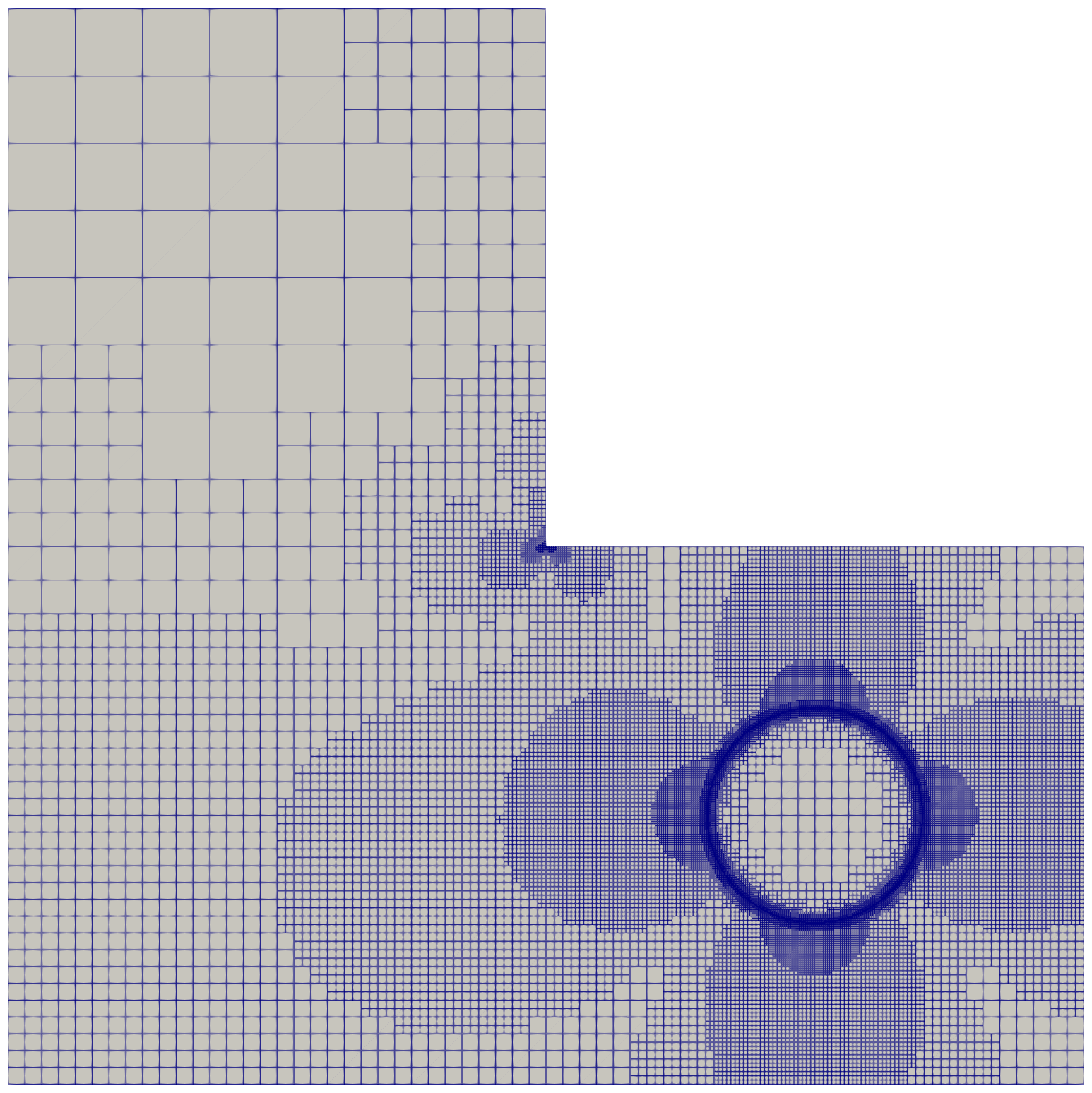}
             & \includegraphics[scale=0.08]{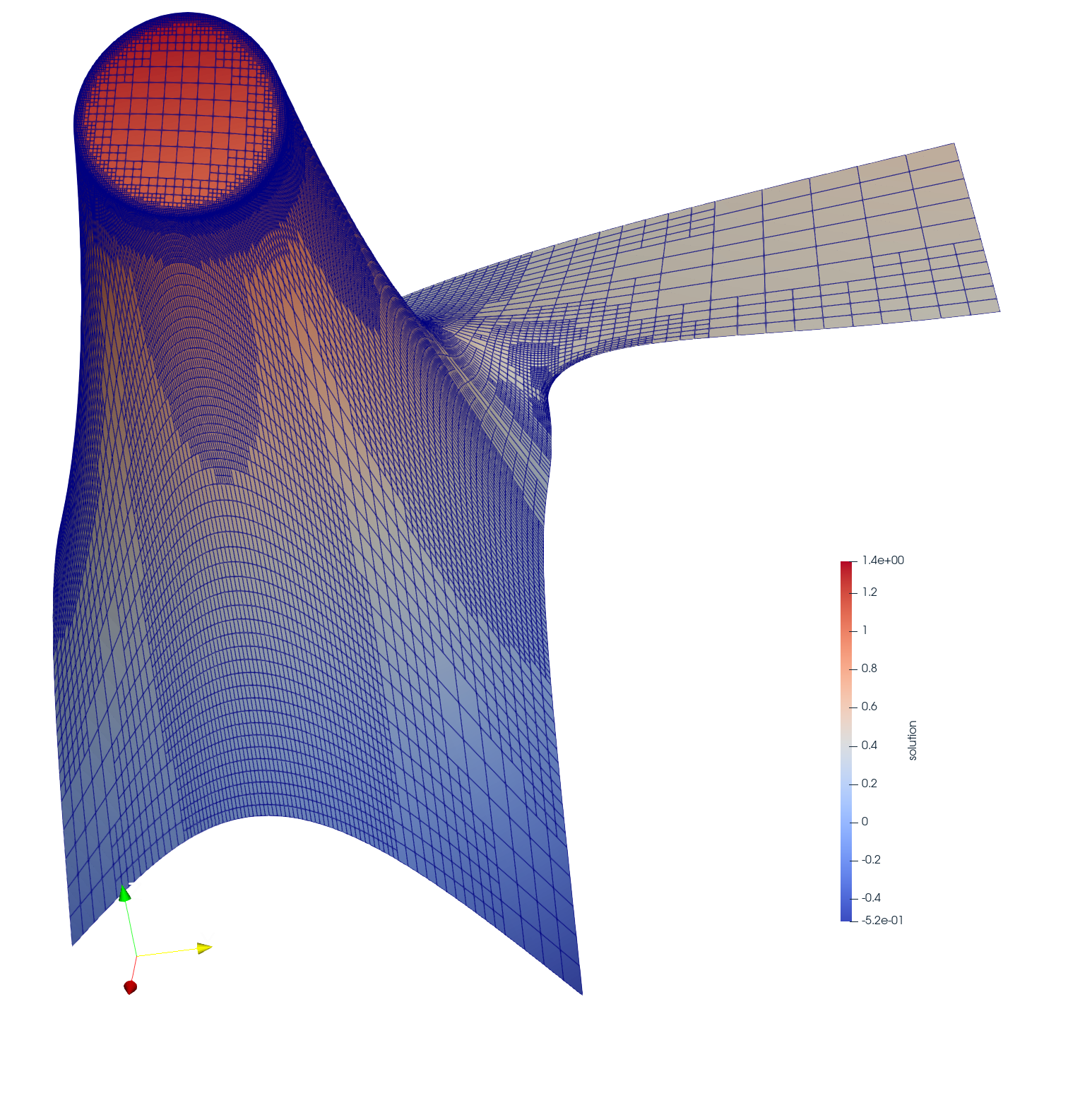} \\
        \end{tabular}
    \end{center}
    \caption{Test on a L-shaped domain: (left) the subdivisions of $U_3$ in \emph{$\REGSOLVE$} and (right) the corresponding Galerkin approximation using \emph{Tensor product $C^1$}.}
    \label{f:lshaped-subdivision-and-solution}
\end{figure}

\subsection{Convergence tests in the unit cube}
We test our numerical algorithm in 3d by setting $\Omega=(-1,1)^3$ and $\gamma=\partial B_{R}(c)$ with $R=0.2$ and $c=(0.3,0.3,0.3)^{\mathtt T}$. We also set the data function $f=\tfrac{1}{R^2}$ on $\gamma$ and $g=1/|x-c|$ so that the analytic solution is given by
\[
    u(x) = \left\{
    \begin{aligned}
        1/{|x-c|}, & \quad \text{if } |x-c| >R,    \\
        1/{R},     & \quad \text{if } |x-c|\le R .
    \end{aligned}
    \right.
\]
\reviewblue{We start with an initial uniform grid $\mathcal T_0$ with the mesh size $\sqrt{3}/16$. To approximate the interface $\gamma$, we start with initial quasi-uniform coarse mesh and refine it globally 7 times so that the geometric error is small enough. For the other approximation parameters, we set $j_{\max}=5$, $\tau_0=1.5$, $\beta=0.8$, $\lambda=1$, $\theta=0.5$ in $\SOLVE$ and $\widetilde \theta = 0.8$ in $\DATA$. In Figure~\ref{f:cube-error}, we report the $H^1(\Omega)$-error against \#DoFs for the following three different types of $\delta^r$: the \emph{radially symmetric $C^1$} type, the \emph{tensor product $C^\infty$} type generated by $\psi_{1d}(x)=\exp(1-1/(1-x^2))\chi_{(-1,1)}(x)$, and the \emph{tensor product $L^\infty$} type generated by $\psi_{1d}(x)=\tfrac12\chi_{(-1,1)}(x)$. For each error plot, we also report the slope of the linear regression of the last five sampling points. For the choice of \emph{tensor product $C^\infty$}, the performance is sub-optimal and close to the predicted rate $\tfrac15$. When using \emph{radially symmetric $C^1$}, the observed convergence rate is better than the best possible rate $\tfrac14$. As for \emph{tensor product $L^\infty$}, the performance is between $\tfrac14$ and $\tfrac15$. We also report the coarse grid and the grids for $U_5$, $U_7$ as well as the approximation $U_7$ using \emph{radially symmetric} $C^1$ in Figure~\ref{f:cube-subdivision}.}

\begin{figure}[hbt!]
    \begin{center}
        \includegraphics[scale=0.5]{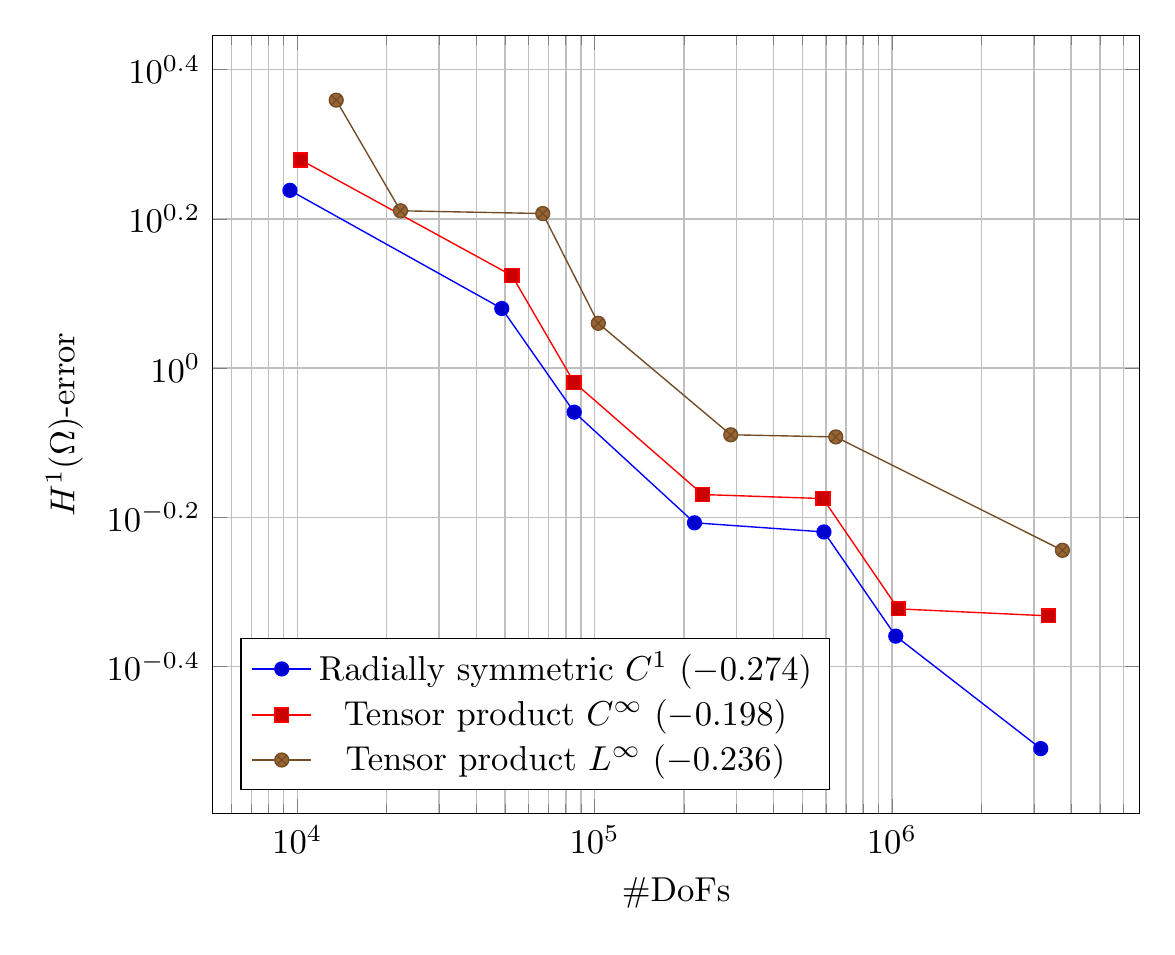}
    \end{center}
    \caption{Tests in the unit cube: $H^1(\Omega)$-error decay between the solution $u$ and $U_j$ for $j=1,\ldots,7$ and for different choices of $\depsilon(x)$. In terms of each plot, the slope of the linear regression of the last five sampling points is reported in the legend.}
    \label{f:cube-error}
\end{figure}

\begin{figure}[hbt!]
    \begin{center}
        \begin{tabular}{ccc}
            \includegraphics[scale=0.20]{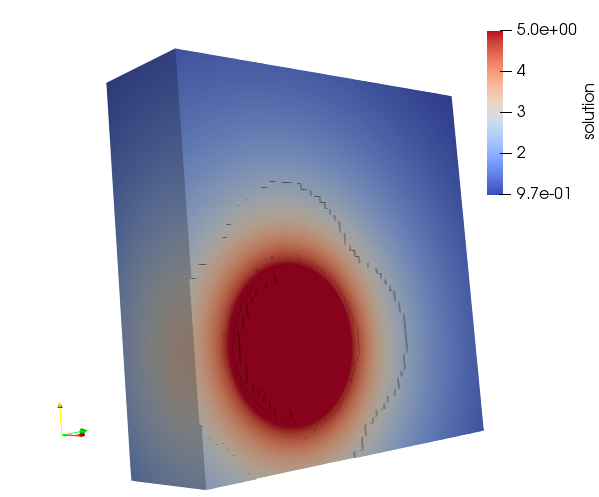} & \includegraphics[scale=0.20]{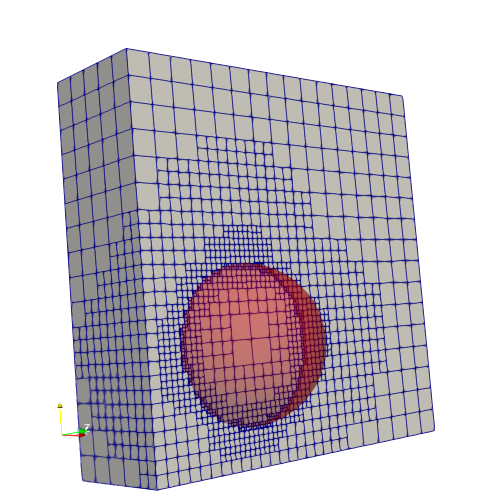} &
            \includegraphics[scale=0.20]{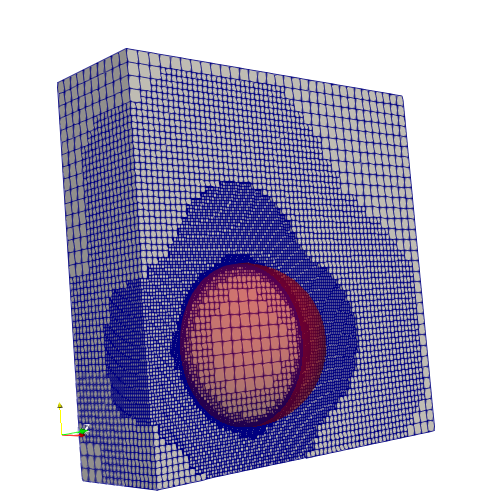}                                                 \\
        \end{tabular}
    \end{center}
    \caption{Tests in the unit cube: the crinkle clip ($x_1\le 0.3$) of the approximation $U_7$ ($3741904$ DoFs) (left) as well as the subdivisions for $U_5$ (mid) and $U_7$ (right) using \emph{radially symmetric} $C^1$. The interface $\gamma$ is marked in red.}
    \label{f:cube-subdivision}
\end{figure}

\subsection{Performance tests in the unit square}\label{ss:square}
Consider $\Omega=(0,1)^2$, $\gamma=\partial B_{R}(c)$ with $R=0.2$ and $c=(0.3,0.3)^{\Tr}$, $f=\tfrac{1}{R}$ and $g=\ln(|x-c|)$. Similar to the previous section, we can obtain the following exact solution
\[
    u(x) = \left\{
    \begin{aligned}
        -\ln(|x-c|), & \quad \text{if } |x-c| >R,    \\
        -\ln(R),     & \quad \text{if } |x-c|\le R .
    \end{aligned}
    \right.
\]
We shall compare the performance of our numerical algorithms both in Algorithm~\ref{alg:regsolve} and Remark~\ref{r:another-regsolve} with the algorithm without regularization; see the numerical algorithm from Section~7.2 of \cite{cohen2012convergence}. To be more precise, the algorithm without using the regularization is based on $\SOLVE$ by replacing the data indicator $\mathcal D$ with the following surrogate data indicator:
\[
    \widetilde {\mathcal D} (f, T,\mathcal T):= h_T^{1/2} \|f\|_{L^2(T\cap\gamma)} .
\]
Using the exact solution $u$, after the $j$-th iterate of $\REGSOLVE$ in Algorithm~\ref{alg:regsolve} using \emph{tensor product} $L^\infty$, we compute the $H^1(\Omega)$-error between $u$ and $U_j$, denoted by $e_j$. For the parameters we set $\tau_0=0.3$, $\beta=0.7$, $\lambda=\tfrac13$, $\theta=\widetilde\theta=0.55$. Then we run the non-regularized program with the same parameters and terminate it when the energy error is smaller than $e_j$, denoting $\widetilde e_j$ the energy error for the corresponding output approximation. For $j=8,9,\ldots,12$. We also compute $e_j$ using the algorithm provided by Remark~\ref{r:another-regsolve} with $j=11,\ldots,15$. Now we report those errors and the CPU times for each program against \#DoFs in Figure~\ref{f:square-error-and-time}. We observe that all algorithms are quasi-optimal but the algorithm from Remark~\ref{r:another-regsolve} requires more DoFs.

In terms of the computation time, it turns out that
Algorithm~\ref{alg:regsolve} needs more time when the discrete system is small
(less than $10^7$) and becomes more efficient when the size of the system is
increasing. \reviewblue{ Since the computational cost associated to the
    regularized version is comparable to the one required by the non-regularized
    version \emph{when computed on the same grid} (see \cite[Figure
        7]{heltai2020priori}), a fair comparison between the different AFEM algorithms
    should keep into account the computational cost in terms of the attained
    accuracy.}

\reviewblue{The regularized case reaches lower errors, for the same number of
    degrees of freedom, but it is more expensive (due to a larger number of refined
    elements around the interface $\gamma$ required by our algorithms). The
    computational cost is compensated for by the higher accuracy in the largest
    scale computations, where the computational cost per degree of freedom is
    comparable, making the regularized approach roughly comparable to the
    non-regularized one also in the AFEM context.} In Figure~\ref{f:square-grids},
we finally report the grid for $U_5$ using Algorithm~\ref{alg:regsolve} and
corresponding grid for the non-regularized algorithm.

\begin{figure}[hbt!]
    \begin{center}
        \begin{tabular}{ccc}
            \includegraphics[width=0.3\textwidth]{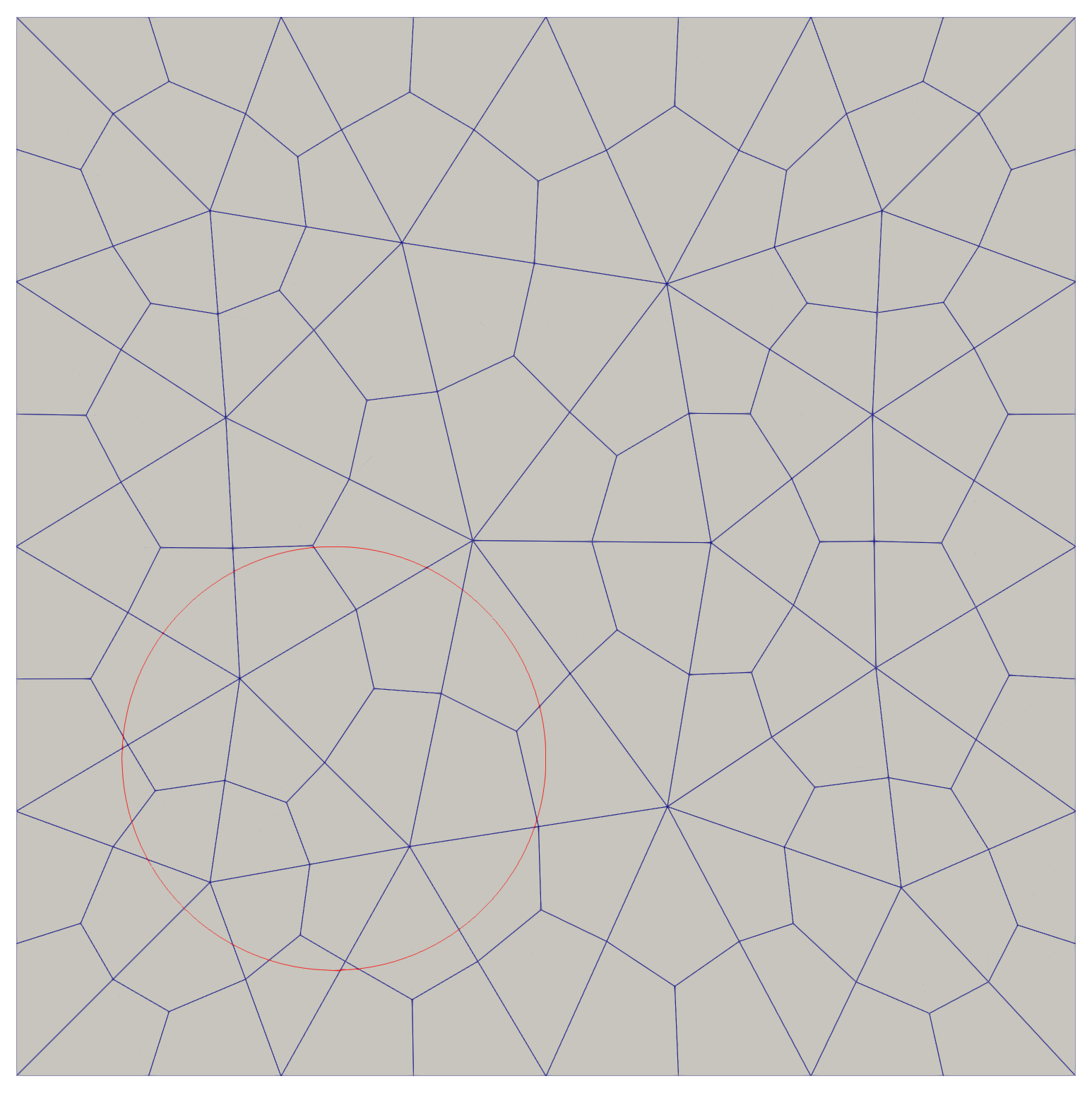} &
            \includegraphics[width=0.3\textwidth]{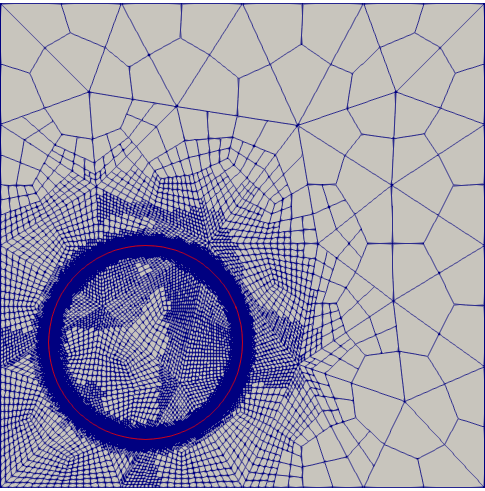}
                                                                                & \includegraphics[width=0.3\textwidth]{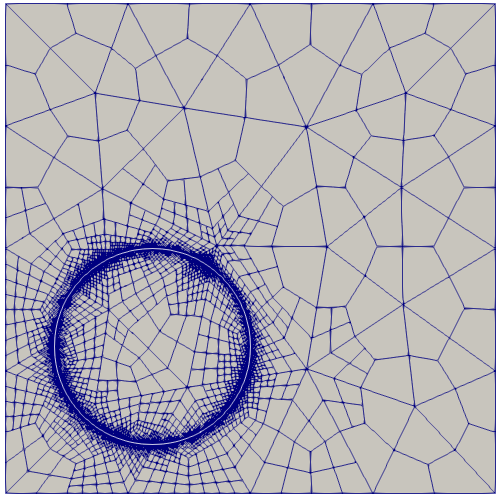} \\
        \end{tabular}
    \end{center}
    \caption{Tests on a square: (left) the unstructured coarse mesh $\mathcal T_0$, (center) the subdivision for $U_5$, and (right) the corresponding subdivision using non-regularized algorithm.}
    \label{f:square-grids}
\end{figure}

\begin{figure}[hbt!]
    \begin{center}
        \includegraphics[scale=0.5]{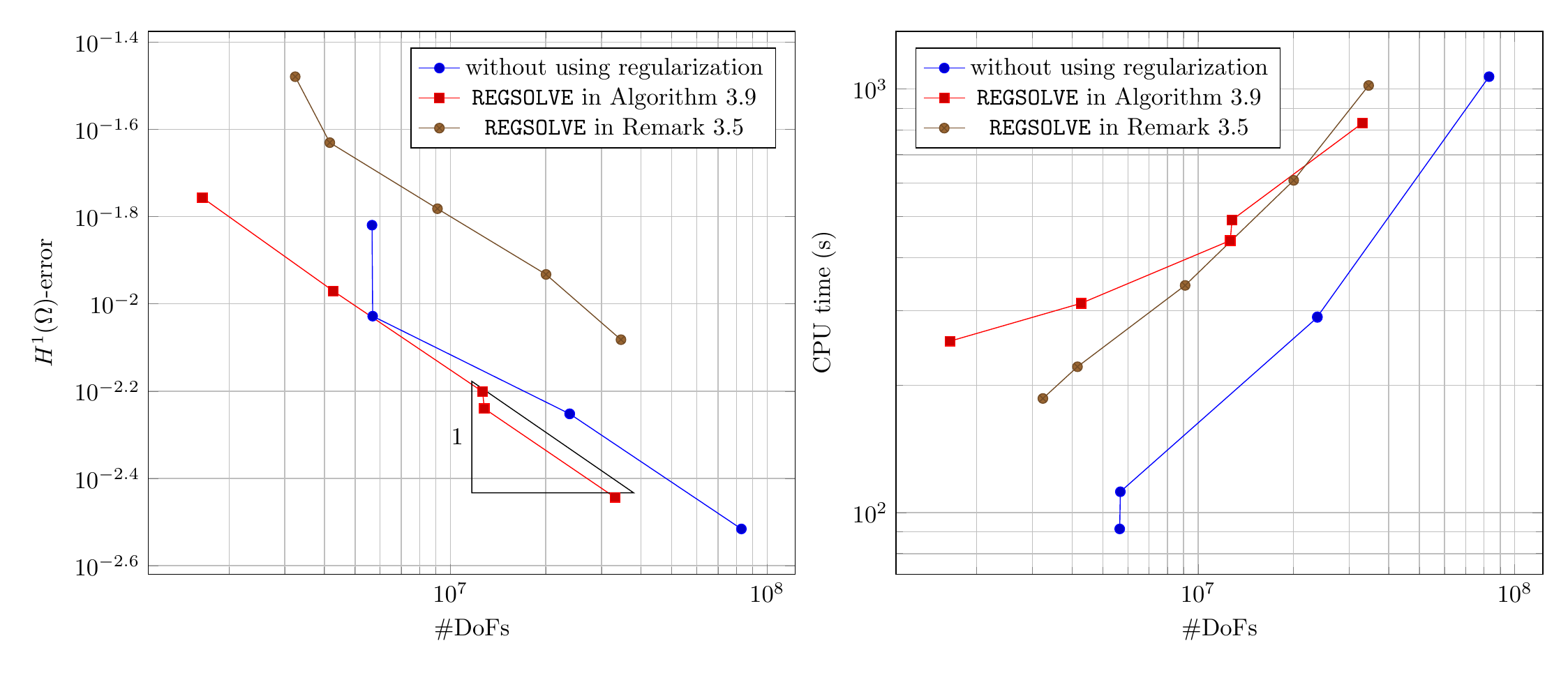}\\
    \end{center}
    \caption{Tests on a square: (left) $\|U_j-u\|_{H^1(\Omega)}$ using \emph{$\REGSOLVE$} with \emph{tensor product} $L^\infty$ for $j=8,\ldots$ and the corresponding $H^1(\Omega)$-error decay without using the regularization; (right) computational time against \#nDoFs for two adaptive algorithms. We note that the sampling points for the non-regularized algorithm at $j=10$ and $11$ are so closed that they overlap with each other.}
    \label{f:square-error-and-time}
\end{figure}

\section{Conclusion and outlook}\label{s:discussion}

We have proposed an adaptive finite element algorithm to approximate the solutions of elliptic problems with rough data approximated by regularization. Such problems are relevant in many applications ranging from fluid-structure interaction, to the modeling of biomedical applications with complex embedded domains or networks.

Our approach builds on classical results for adaptive finite element theory for $H^{-1}$ data, and for $L^2$ data. \reviewblue{In particular, we analyze the regularization of line Dirac delta distributions via convolutions with compactly supported approximated Dirac delta functions, with radius $r$}. What characterizes the regularization process is that, even if the resulting forcing term is as  regular as desired -- at fixed $r$ -- its regularity does not hold uniformly with respect to $r$.

This observation suggests that one could exploit the knowledge of the asymptotic behavior of the data regularity with respect to $r$ to construct an algorithm that \reviewblue{a priori} refines around the rough part of the forcing term, in a way that guarantees quasi optimal convergence, at least in the two dimensional case.

The resulting approximation error is split into a regularization error for $u$ and the finite element approximation error for the regularized $\uepsilon$. In this work we show how to control the dependencies between these two errors and provide an algorithm in which the error decay in the energy norm is quasi-optimal in two dimensional space and sub-optimal in three dimensional space.

Our findings are specific for the co-dimension one case, but could be easily extended to the co-dimension zero case, where the dependency of the regularity on $r$ disappears naturally, due to the intrinsic $L^2$ nature of the resulting forcing term.

\appendix

\section{Proof of Proposition~\ref{p:interface}}
Given a cell $T\in \mathcal G(\mathcal T, r)$, $T$ will be marked to refine in $\INTERFACE(\mathcal T, r)$ if $h_T>\tfrac{r}{2}$. This also holds in the process of
$\mathcal T^* = \INTERFACE(\mathcal T_0, r)$ as the parents of $T$ also satisfy the marking criteria. So $ \#(\widetilde{\mathcal T}) - \#(\mathcal T) \le \#(\mathcal T^*) - \#(\mathcal T_0)$. For any nonnegative integer $j$, denote $\widetilde {\mathcal T}_0=\mathcal T_0$ and $\widetilde{\mathcal T}_j$ the conforming refinement of $\mathcal T_{j-1}$ by bisecting cells in $\mathcal G(\gamma, \widetilde{\mathcal T}_{j-1})$ once. Let $j_r$ be the smallest integer satisfying $\mathcal G_j:=\diam(\mathcal G(\gamma, \widetilde{\mathcal T}_j))<r$. Clearly, $\widetilde{\mathcal T}_{j_r}$ is a refinement of $\mathcal T^*$ and it suffices to show that
\begin{equation}\label{i:app-target}
    \#(\widetilde{\mathcal T}_{j_r}) - \#(\mathcal T_0) \lesssim r^{1-d} .
\end{equation}

According to the assumption \eqref{a:initial-grid}, we apply the quasi-uniformity of $\mathcal G(\gamma,\mathcal T_0)$ as well as the relation $|\widetilde T| = q^{-j} |T|$ for $\widetilde T\in \mathcal G_j$ and its parent $T\in\mathcal \mathcal T_0$ to get
$
    \#(\mathcal G_j)\lesssim q^{i(1-1/d)} .
$
Summing up this estimate for $j=0,\ldots, j_r-1$ to obtain that
\[
    \#(\widetilde{\mathcal T}_{j_r}) - \#(\mathcal T_0)\le \sum_{j=0}^{j_r-1}\#(\mathcal G_j) \lesssim q^{(j_r+1)(1-1/d)}.
\]
The target estimate \eqref{i:app-target} follows from the fact $q^{-j_r/d} \sim r$ with the hidden constants depending on $q, \mathcal T_0$ and $\gamma$.

\section{Proof of Lemma~\ref{l:greedy}}
We split the proof in the following steps.

\boxed{1} Suppose that there are totally $N$ iterations executed in the while loop when ${\GREEDY}(\widetilde{\mathcal T}, F^r,\tau)$ terminates. We denote with $\{T^i\}_{i=0}^N$ the marked cells in the sequence and set $\mathcal T^i = \REFINE(\mathcal T^{i-1}, \{T^{i-1}\})$ for $i=1,\ldots,N$ and $\mathcal T^0=\widetilde{\mathcal T}$. Let
\[
    \delta := d(F^r,T^{N-1}, \mathcal T^{N-1}) =\text{argmax}\{d(F^r,T,\mathcal T^{N-1}):T\in\mathcal T^{N-1}\} .
\]
Noting that for $0\le i\le N-1$, there exists $T'\in \mathcal T^i$ such that $T^{N-1}\subset T$. Clearly, by the definition of $T^i$, we have
\begin{equation}\label{i:upper-delta}
    d(F^r,T^i,\mathcal T^i) \ge d(F^r, T',\mathcal T^i) \ge d(F^r, T^{N-1},\mathcal T^{N-1})
    =\delta .
\end{equation}
According to the definition of $T^{N-1}$, we also get
\begin{equation}\label{i:lower-delta}
    \tau \le \mathcal D(F^r,\mathcal T^{N-1}) \le \delta\sqrt{\#(\mathcal T^{N-1})} .
\end{equation}

\boxed{2} Since $F^r$ is supported in $U_r$, $T^i\subset U_{cr}$ for some constant $c\ge 1$ depending on $\constShapeRegular$. Let $\mathcal B_j\subset \{T^i\}$ be the set satisfying
\begin{equation} \label{i:class-t}
    2^{-(j+1)} |U_{cr}| < |T^i| \le 2^{-j} |U_{cr}|, \quad j\ge 0.
\end{equation}
Due to the refinement process from $\mathcal T^i$ to $\mathcal T^{i+1}$, $\{T^i\}$ are distinct from each other. This implies that by summing up the first inequality of \eqref{i:class-t} for all $T^i\in \mathcal B_j$, we obtain $\#(\mathcal B_j) < 2^{j+1}$. using the second inequality of \eqref{i:class-t} as well as \eqref{i:lower-delta}, we realize that for $T^i\in \mathcal B_j$,
\[
    \delta \le d(F^r, T^i, \mathcal T^i) = |T^i|^{1/d} \|F^r\|_{L^2(T^i)}
    \le 2^{-j/d} |U_{cr}|^{1/d} \|F^r\|_{L^2(T^i)} .
\]
We square the above estimate and sum it up for all $T^i\in \mathcal B_j$ to obtain
\[
    \delta^2\#(\mathcal B_j) \le  2^{-2j/d} |U_{cr}|^{2/d} \|F^r\|_{L^2(U_{cr})}^2
\]
The above estimate together with $\#(\mathcal B_j) < 2^{j+1}$ implies that the total marked cells can be estimated with
\begin{equation}\label{i:N-estimate}
    N =\sum_{j\ge 0}\#(\mathcal B_j) \le \sum_{j\ge 0}
    \min(2^{-2j/d}\delta^{-2} |U_{cr}|^{2/d} \|F^r\|_{L^2(U_{cr})}^2, 2^{j+1}) .
\end{equation}

\boxed{3} Since the first term of the above minimum decreases with respect to $j$, we set $j_0$ be the smallest integer such that
\begin{equation}\label{i:estimate-assumption}
    2^{j_0+1} > 2^{-2j_0/d}\delta^{-2} |U_{cr}|^{2/d} \|F^r\|_{L^2(U_{cr})}^2 .
\end{equation}
Simplifying this relation for $j_0$ we get
\[
    2^{-j_0}\lesssim \bigg(\delta^{-1}|U_{cr}|^{1/d}\|F^r\|_{L^2(U_{cr})} \bigg)^{-2d/(2+d)}.
\]
If $j_0>0$, \eqref{i:estimate-assumption}  is violated for $j=j_0-1$ and we can deduce that
\[
    2^{j_0} \lesssim \bigg( \delta^{-1}   |U_{cr}|^{1/d} \|F^r\|_{L^2(U_{cr})} \bigg)^{2d/(2+d)}.
\]
Inserting the above two estimate in \eqref{i:N-estimate}, we have
\begin{equation}\label{i:N-estimate-2}
    \begin{aligned}
        N & \le \sum_{j<j_0} 2^{j+1}
        + \delta^{-2} |U_{cr}|^{2/d} \|F^r\|_{L^2(U_{cr})}^2\sum_{j\ge j_0}  2^{-2j/d}                     \\
          & \lesssim  2^{j_0} + \bigg(\delta^{-1} |U_{cr}|^{1/d} \|F^r\|_{L^2(U_{cr})}\bigg)^2 2^{-2j_0/d} \\
          & \lesssim \bigg(\delta^{-1} |U_{cr}|^{1/d} \|F^r\|_{L^2(U_{cr})}\bigg)^{2d/(2+d)}               \\
    \end{aligned}
\end{equation}

\boxed{4} We shall further bound the above estimate by considering the $L^2(U_{cr})$-norm of $F^r$. In fact,
\[
    \begin{aligned}
        \|F^r\|_{L^2(U_{cr})}^2 & = \int_{U_{cr}}
        \bigg(\int_{\gamma}\delta^r(x-y)f(y) \diff y\bigg)^2 \diff x                                                                                         \\
                                & \le \frac{1}{r^{2d}} \|\psi\|_{L^\infty(\Real^d)}^2 \|f\|_{L^\infty(\gamma)}^2 \int_{U_{cr}} |B_r(x)\cap \gamma|^2 \diff x \\
                                & \le C\frac{1}{r^{2d}} \|\psi\|_{L^\infty(\Real^d)}^2 \|f\|_{L^\infty(\gamma)}^2 r^{2(d-1)+1}
        \lesssim r^{-1}\|f\|_{L^\infty(\gamma)}^2 ,
    \end{aligned}
\]
where for the second inequality above we used the fact that $|B_r(x)\cap\gamma|\lesssim r^{d-1}$ for $x\in D_{cr}$ and $|D_{cr}|\sim |\gamma|r$.

\boxed{5} Combining the results from Step~3 and 4 to deduce that
\begin{equation}\label{i:N-estimate-3}
    N\lesssim \delta^{-2d/(2+d)} r^{(2-d)/(2+d)} \|f\|_{L^\infty(\gamma)}^{2d/(2+d)} ,
\end{equation}
where the hidden constant above depends on $\constShapeRegular, \gamma, d$ and $\psi$. On the other hand, the complexity assumption \eqref{i:complexity-refine} implies that
\[
    \#(\mathcal T^{N-1}) - \#(\mathcal T^0) \le \constComplexity N.
\]
Hence the above relation together with \eqref{i:N-estimate-3} and \eqref{i:lower-delta} concludes that
\[
    \begin{aligned}
        \tau & \le  \delta\sqrt{\#(\mathcal T^{N-1})}                                                                                                                                    \\
             & \lesssim \frac{\sqrt{\#(\mathcal T^0) + \constComplexity N}}{N^{(2+d)/(2d)}} r^{1/d-1/2} \|f\|_{L^\infty(\gamma)} \lesssim N^{-1/d}r^{1/d-1/2} \|f\|_{L^\infty(\gamma)} ,
    \end{aligned}
\]
which implies the target estimate.

\boxed{6} If $j_0=0$, we directly get $\delta^{-1}|U_{cr}|^{1/d} \|F^r\|_{L^2(U_{cr})} \lesssim 1$. Starting form \eqref{i:N-estimate}, we apply the result in Step~4 to write
\[
    \begin{aligned}
        N & \lesssim  \delta^{-2}|U_{cr}|^{2/d} \|F^r\|_{L^2(U_{cr})}^2\sum_{j\ge 0} 2^{-2j/d}                              \\
          & \lesssim \delta^{-2}|U_{cr}|^{2/d} \|F^r\|_{L^2(U_{cr})}^2                                                      \\
          & \le \delta^{-1} |U_{cr}|^{1/d} \|F^r\|_{L^2(U_{cr})} \lesssim \delta^{-1} r^{1/d-1/2}\|f\|_{L^\infty(\gamma)} .
    \end{aligned}
\]
Therefore, we again obtain the target estimate following the argument in Step~5. The proof is complete.

\bibliographystyle{siamplain}
\bibliography{references}
\end{document}